\newtheorem{thm}{Theorem}
\newtheorem{lem}[thm]{Lemma}
\newtheorem{prop}[thm]{Proposition}
\newtheorem{defn}[thm]{Definition}
\newtheorem*{defn*}{Definition}
\newtheorem{assump}[thm]{Assumption}
\theoremstyle{remark}
\newtheorem{rem}[thm]{Remark}
\newtheorem{nrem}{Notational Remark}
\newtheorem{ex}{Example}
\newcommand{\deq}{\mathrel{\mathop:}=}
\newcommand{\R} {\mathbb{R}}
\newcommand{\C} {\mathbb{C}}
\newcommand{\D} {\mathbb{D}}
\newcommand{\N} {\mathbb{N}}
\newcommand{\adj}{^{*}} 
\newcommand{\tp}{^{\intercal}}
\newcommand{\dist} {\mathrm{dist}}
\DeclareMathOperator{\diag}{diag}
\DeclareMathOperator{\tr}{tr}
\DeclareMathOperator{\Tr}{Tr}
\DeclareMathOperator{\supp}{supp}
\DeclareMathOperator{\spec}{spec}
\DeclareMathOperator{\re}{\mathrm{Re}}
\DeclareMathOperator{\im}{\mathrm{Im}}
\newcommand{\caC}{{\mathcal C}}
\newcommand{\caD}{{\mathcal D}}
\newcommand{\caM}{{\mathcal M}}
\newcommand{\caS}{{\mathcal S}}
\newcommand{\bbD}{{\mathbb D}}
\newcommand{\bbH}{{\mathbb H}}
\newcommand{\bsa}{{\boldsymbol a}}
\newcommand{\bsb}{{\boldsymbol b}}
\newcommand{\bsd}{{\boldsymbol d}}
\newcommand{\bse}{{\boldsymbol e}}
\newcommand{\bsh}{{\boldsymbol h}}
\newcommand{\bsm}{{\boldsymbol m}}
\newcommand{\bss}{{\boldsymbol s}}
\newcommand{\bsu}{{\boldsymbol u}}
\newcommand{\bsv}{{\boldsymbol v}}
\newcommand{\bsx}{{\boldsymbol x}}
\newcommand{\bsG}{{\boldsymbol G}}
\newcommand{\bsH}{{\boldsymbol H}}
\newcommand{\bsM}{{\boldsymbol M}}
\newcommand{\wt}{\widetilde}
\newcommand{\ol}{\overline}
\newcommand{\beq}{ \begin{equation} }
	\newcommand{\eeq}{ \end{equation} }
\newcommand{\beqs}{\begin{equation*}}
	\newcommand{\eeqs}{\end{equation*}}
\newcommand{\baligned}{\beq\begin{aligned}}
	\newcommand{\ealigned}{\end{aligned}\eeq}
\newcommand{\lone}{\mathbbm{1}} 
\newcommand{\dd}{\mathrm{d}}
\newcommand{\ii}{\mathrm{i}}
\renewcommand{\P}{\mathbb{P}}
\newcommand{\AND}{\quad\text{and}\quad}
\newcommand\norm[1]{\Vert#1\Vert}
\newcommand\Norm[1]{\left\Vert#1\right\Vert}
\newcommand\Absv[1]{\left\vert#1\right\vert}
\newcommand\absv[1]{\vert#1\vert}
\newcommand\brkt[1]{\langle#1\rangle}
\newcommand\Brkt[1]{\left\langle#1\right\rangle}
\numberwithin{equation}{section} 
\numberwithin{thm}{section}
\numberwithin{ex}{section}
\title{Density of Brown measure of free circular Brownian motion}
\date{\today}
\author{L\'{a}szl\'{o} Erd\H{o}s} 
\author{Hong Chang Ji}
\address{Institute of Science and Technology Austria \\ Am Campus 1, 3400 Klosterneuburg, Austria}
\email{lerdos@ist.ac.at}
\address{Department of Mathematics, University of Wisconsin-Madison \\ 480 Lincoln Dr, Madison, WI 53706, United States}
\email{hji56@wisc.edu}
\thanks{$^\dagger$ Partially supported by ERC Advanced Grant "RMTBeyond" No. 101020331}
\thanks{$\ddagger$ Supported by ERC Advanced Grant "RMTBeyond" No. 101020331}
\subjclass[2020]{46L54, 60B20}
\begin{document}
	\begin{abstract}
	We consider the Brown measure of the free circular Brownian motion, $\bsa+\sqrt{t}\bsx$, with an arbitrary initial condition $\bsa$, i.e. $\bsa$ is a general non-normal operator and $\bsx$ is a circular element $*$-free from $\bsa$. 
	We prove that, under a mild assumption on $\bsa$, the density of the Brown measure has one of the following two types of behavior around each point on the boundary of its support 
	-- either (i) sharp cut, i.e. a jump discontinuity along the boundary, or (ii) quadratic decay at certain critical points on the boundary.
	Our result is in direct analogy with the previously known phenomenon for the spectral density of free semicircular Brownian motion, whose singularities are either a square-root edge or a cubic cusp.
	We also provide several examples and counterexamples, one of which shows that our assumption on $\bsa$ is necessary.
\end{abstract}
	\maketitle
	\section{Introduction}

Let $\caM$ be a von Neumann algebra with faithful, normal, tracial state $\brkt{\cdot}$. In \cite{Brown1986}, Brown proved that for every operator $\bsa\in\caM$ we may associate a probability measure $\rho_{\bsa}$ on $\C$ uniquely determined by
\beq\label{eq:Brown}
\brkt{\log\absv{\bsa-z}}=\int_{\C}\log\absv{w-z}\dd\rho_{\bsa}(w),\qquad \forall z\in\C,
\eeq
or equivalently
\beq\label{eq:Brown_fund}
\rho_{\bsa}=\frac{1}{2\pi}\Delta\brkt{\log\absv{\bsa-\cdot}},
\eeq
with the distributional Laplacian on the right-hand side. Obviously \eqref{eq:Brown_fund} implies that $\rho_{\bsa}$ is identical to the spectral measure when $\bsa$ is normal (so that $\bsa\bsa\adj=\bsa\adj\bsa$). It was also proved in \cite{Brown1986} that $\rho_{\bsa}$ is consistent with the holomorphic functional calculus.

The main purpose of our paper is to study the Brown measure of the sum $\bsa+\sqrt{t}\bsx$ around the edge, where $\bsa\in\caM$ is a general operator, $\bsx\in\caM$ is a circular element\footnote{A circular element is defined by the sum $(\bss_{1}+\ii\bss_{2})/\sqrt{2}$ where $\bss_{1},\bss_{2}$ are free pair of semicircular elements.} $*$-free from $\bsa$, and $t>0$.
If the time $t$ varies, the flow $t\mapsto \bsa+\sqrt{t}\bsx$ is also known as the \emph{free circular Brownian motion}. Several of our results may be compared with analogous known results in the Hermitian case, for the semicircular flow $\bsa+\sqrt{t}\bss$ where $\bsa=\bsa\adj$ and $\bss$ is a semicircular element.

Somewhat informally, our results are as follows. In the main result, Theorem~\ref{thm:sharp}, we identify the behavior of the Brown measure of $\bsa+\sqrt{t}\bsx$ at the boundary of its support. More specifically, we prove that if $\bsa$ satisfies (see Assumption \ref{assump})
\beq\label{eq:assu}
\Brkt{\frac{1}{\absv{\bsa-z}^{2}}}>\frac{1}{t}, \qquad \forall z\in\spec(\bsa),
\eeq
then at any point $z_{0}\in\partial\supp\rho_{\bsa+\sqrt{t}\bsx}$ the density $\rho$ of $\rho_{\bsa+\sqrt{t}\bsx}$ admits one of the following two asymptotic expansions around $z_{0}$; 
\begin{itemize}
	\item[(a)](Sharp edge) as $z\to z_{0}$, we have\footnote{The asymptotic relation $\sim$ in \eqref{eq:sharp} and \eqref{eq:quad} denotes that the left-hand sides are bounded from above and below by the right-hand sides, up to a constant factor that depends only on $\bsa$, $z_{0}$, and $t$. See also Notational Remark \ref{nrem:asymp}.}
	\beq\label{eq:sharp}
	\rho(z)\sim \lone(z\in\supp\rho_{\bsa+\sqrt{t}\bsx}).
	\eeq
	\item[(b)](Quadratic edge) as $z\to z_{0}$ while away from a certain direction in $\C\cong\R^{2}$, we have
	\beq\label{eq:quad}
	\rho(z)\sim \absv{z-z_{0}}^{2}\lone(z\in\supp\rho_{\bsa+\sqrt{t}\bsx}).
	\eeq
\end{itemize}
We here emphasize that our result is exhaustive, that is, no decay rate for $\rho$ other than zeroth or second order is possible at the boundary. Note that the first case (a) with sharp edge is exactly the behavior of $\rho_{\bsx}$, in which case the density is given by $\rho(z)=(\pi )^{-1}\lone(\absv{z}\leq1)$. The second case (b) may occur only on an analytic submanifold of the boundary (which is typically discrete), consisting of special \emph{critical points}. In fact, (b) does not happen except for countably many $t$'s; see Remark~\ref{rem:C} for details. We are not aware of any previous result that shows the second asymptotics (b) for general $\bsa$.

The assumption \eqref{eq:assu} is a mild, typical condition, which for example is true for limits of natural random matrix models; see Remark~\ref{rem:assump_typical}. In fact, in our other result Theorem~\ref{thm:higher}, we construct a concrete example showing that the assumption \eqref{eq:assu} is necessary for Theorem~\ref{thm:sharp}. In particular, we prove that the density may have an arbitrarily fast decay without \eqref{eq:assu}. 

Finally in Theorem \ref{thm:conn}, we prove for general $\bsa$ that the number of connected components of $\supp\rho_{\bsa+\sqrt{t}\bsx}\cup\spec(\bsa)$ decreases in $t>0$. The corresponding result for the Hermitian free Brownian motion $\bsa+\sqrt{t}\bss$ was proved in \cite[Proposition 3]{Biane1997}.

Theorem \ref{thm:sharp} is motivated by the corresponding Hermitian phenomenon proved in \cite{Alt-Erdos-Kruger2020}, whose result applies to the spectral measure of the free semicircular Brownian motion $\bsa+\sqrt{t}\bss$ with Hermitian $\bsa$ and semicircular element $\bss$. 
It was proved in \cite[Theorem 2.5]{Alt-Erdos-Kruger2020} that, when $\bsa$ is Hermitian satisfying a condition\footnote{The actual condition in \cite[Theorem 2.5]{Alt-Erdos-Kruger2020} is that $\sup_{\im z>0}\norm{\bsm(z)}<\infty$, where $\bsm$ solves the $\caM$-valued Dyson equation (see e.g. \cite[Eq. (2.3)]{Alt-Erdos-Kruger2020}). Specializing to $\bsa+\sqrt{t}\bss$, it is easy to see that the solution is $\bsm(z)=1/(\bsa-z-tm(z))$ where $m(z)$ is the Stieltjes transform of $\rho_{\bsa+\sqrt{t}\bss}$ (see \eqref{eq:Pastur_Herm}). Then \eqref{eq:assu} implies $\sup_{\im z>0}\norm{\bsm(z)}<\infty$ by \cite[Lemma 4]{Biane1997}. Essentially the same, but more quantitative proof that \eqref{eq:assu} implies $\sup_{\im z>0}\norm{\bsm(z)}<\infty$ appeared also in \cite[(A.4)]{Lee-Schnelli-Stetler-Yau2016}.\label{ftnt:m}}
that is implied by \eqref{eq:assu},
any spectral edge of $\bsa+\sqrt{t}\bss$ is either a square root edge or a cubic cusp. More precisely, if we write $\rho$ for the density of $\rho_{\bsa+\sqrt{t}\bss}$, then for each $x_{0}\in\partial\{x:\rho(x)>0\}$ we have either
\beq\label{eq:edge}
\rho(x)\sim \sqrt{(x-x_{0})_{\pm}} \qquad \text{as }x\to x_{0},
\eeq
where $(x-x_{0})_{\pm}$ can be the positive or negative part of $(x-x_{0})$ depending on whether $x_{0}$ is a left or right edge point, or
\beq\label{eq:cusp}
\rho(x)\sim \absv{x-x_{0}}^{1/3}\qquad \text{as }x\to x_{0}.
\eeq

Another closely related motivation comes from \emph{edge universality} for local eigenvalue statistics of random matrices. From \cite[Thoerem 6]{Sniady2002}, it is well-known that $\rho_{\bsa+\bsx}$ is the large $N$ limit of $\rho_{A+X}$, where $A+X\in\C^{N\times N}$ is a deformed random matrix with $A$ converging to $\bsa$ and $X$ consisting of i.i.d. entries with mean zero and variance $1/N$. Hence our result implies that the eigenvalue density of $A+X$ has universal (over $A$) macroscopic behavior at an edge point, and more importantly it identifies the two universality classes of local eigenvalue statistics corresponding to (a) and (b). Indeed, our results have already been used in very recent papers \cite{Liu-Zhang2023arXiv} and \cite{Liu-Zhang2024arXiv} to prove the edge universality when (a) and (b) holds, if $X$ is a Ginibre ensemble and $A$ satisfies certain restrictive assumptions e.g. normality. Also, the current result is used in our work in preparation \cite{Campbell-Cipolloni-Erdos-Ji2024}, where we prove edge universality at sharp edges (case (a)), in full generality i.e. for any matrix of the form $A+X$ where $A$ is arbitrary and $X$ has an i.i.d. entry distribution.

\subsection{Previous works}
As mentioned above, our paper is largely motivated by the corresponding Hermitian problem, i.e. studying the spectral measure of the free sum $\bsa+\sqrt{t}\bss$ of a Hermitian $\bsa$ with a semicircular element $\bss$. Hence we first focus on the previous work on $\rho_{\bsa+\sqrt{t}\bss}$. The law $\rho_{\bsa+\sqrt{t}\bss}$ is often referred to as the free additive convolution and denoted by $\rho_{\bsa}\boxplus\rho_{\sqrt{t}\bss}$. Rigorous analysis on this subject began by Biane in \cite{Biane1997}, who proved various regularity properties of $\rho_{\bsa+\sqrt{t}\bss}$ for fully general $\bsa$, including boundedness and H\"{o}lder continuity of its density as well as the number of connected components of its support. 
See also \cite{Bercovici-Wang-Zhong2023} for a generalization of Biane's result on H\"{o}lder continuity to free infinitely divisible laws.
Later the free convolution of two generic measures was studied in the series of papers \cite{Belinschi2006,Belinschi2008,Belinschi2014} by Belinschi, using the analytic subordination results proved in \cite{Belinschi-Bercovici2007} by Belinschi and Bercovici. Notably, it was proved that a general free additive convolution is also absolutely continuous in \cite[Theorem 4.1]{Belinschi2008}, and has bounded density under mild assumptions in \cite[Corollary 8]{Belinschi2014}.

On a finer scale than in the aforementioned works, there have been several results to probe the density of free convolution at spectral edges. More precisely, a typical goal in this context is to prove that the density decays as square root around the spectral edge, similarly to the semicircular law. In \cite{Lee-Schnelli-Stetler-Yau2016,Shcherbina2011a,Shcherbina2011b} (see e.g. \cite[Lemma 3.5]{Lee-Schnelli-Stetler-Yau2016}), it has been proved that the free convolution $\rho_{\bsa}\boxplus\rho_{\sqrt{t}\bss}$ has square-root decay at extremal edges as in \eqref{eq:edge} under the assumption that
\beq\label{eq:assu_Herm}
\int_{\R}\frac{1}{(y-x)^{2}}\dd\rho_{\bsa}(y)=\Brkt{\frac{1}{(\bsa-x)^{2}}}>\frac{1}{t},\qquad\forall x\in\supp\rho_{\bsa}=\spec(\bsa).
\eeq
Notice that \eqref{eq:assu_Herm} is exactly the same as \eqref{eq:assu} when $\bsa$ is Hermitian. 
More recently in \cite[Theorem 2.2]{Bao-Erdos-Schnelli2020JAM}, a similar result was proved for the free additive convolution of two general measures with power-law decay at an extremal edge. 
These results are confined to the extremal edges precisely due to possible cusps in \eqref{eq:cusp}. 
As mentioned above, it was proved in \cite{Alt-Erdos-Kruger2020} that under a somewhat weaker condition\footref{ftnt:m} than \eqref{eq:assu_Herm} the only other type of singularity of $\rho_{\bsa+\sqrt{t}\bss}$ than the square root edge is the cubic cusp. 
See \cite{Moreillon2022,Moreillon-Schnelli2022} for more results on inner singularities of general free additive convolution. We also mention that if the semicircular element $\bss$ is generalized to (semi-)circular elements in certain block forms, then other types of singularities may also occur, see \cite{Kolupaiev2021,Kruger-Renfrew2021}.

Now we collect related works on the non-Hermitian object, the Brown measure $\rho_{\bsa+\sqrt{t}\bsx}$, where $\bsx$ is circular. Biane and Lehner in \cite[Section 5]{Biane-Lehner2001} proposed a method to compute the density of $\rho_{\bsa+\sqrt{t}\bsx}$ and gave several examples. In the case of Hermitian $\bsa$, very recently in \cite[Theorem 1.1]{Ho-Zhong2023} Ho and Zhong managed to completely characterize the support of $\rho_{\bsa+\sqrt{t}\bsx}$ and prove that the density is bounded by $1/(\pi t)$. More importantly, they also discovered that $\rho_{\bsa+\sqrt{t}\bss}$ (on $\R$) is a pushforward of $\rho_{\bsa+\sqrt{t}\bsx}$ (on $\C$) via a map involving analytic subordination functions. Results of \cite{Ho-Zhong2023} have been extended in \cite{Ho2022} where $\bsx$ was replaced with a more general elliptic element. The key approach in \cite{Ho2022,Ho-Zhong2023} was the PDE techniques developed in \cite{Driver-Hall-Kemp2022}, that were also important in several other recent works on free \emph{multiplicative} Brownian motion (with unitary initial condition); see e.g. \cite{Hall-Ho2023} and references therein.

A more related line of works is \cite{Belinschi-Yin-Zhong2022,Bercovici-Zhong2022,Zhong2021} by Zhong et al., where $\bsa$ can be completely general, typically non-Hermitian or even non-normal. In these works, the same characterization of $\supp\rho_{\bsa+\sqrt{t}\bsx}$ as in \cite{Ho-Zhong2023} was extended to general $\bsa$ beyond Hermitian, and a slightly more explicit formula than \cite{Ho-Zhong2023} was proved for the density of $\rho_{\bsa+\sqrt{t}\bsx}$. See e.g. \cite[Theorem 7.10]{Belinschi-Yin-Zhong2022} for a rigorous statement. 
We emphasize that \cite{Belinschi-Yin-Zhong2022,Bercovici-Zhong2022,Zhong2021} used purely free probabilistic methods, effectively making the proof depend only on specific functionals of $\bsa$ (see $f_{\bsa,\eta}(z)$ in \eqref{eq:Gfn}). Hence these works were able to cover general non-normal $\bsa$, whereas PDE methods in \cite{Ho-Zhong2023} actually computed partial derivatives of the log determinant of $\bsa$ in \eqref{eq:Brown}, requiring $\bsa$ to be Hermitian.

There is another independent series of papers \cite{Bordenave-Capitaine2016,Bordenave-Caputo-Chafai2014} by  Bordenave et al.  covering $\rho_{\bsa+\sqrt{t}\bsx}$. These two works mainly concern eigenvalues of large non-Hermitian random matrix of the form $A+X$, where $A$ is a deterministic matrix and $X$ consists of i.i.d. entries. Since the eigenvalues are asymptotically distributed as $\rho_{\bsa+\sqrt{t}\bsx}$ as the matrix size increases (see \cite{Sniady2002}), it was vital therein to study the support of $\rho_{\bsa+\sqrt{t}\bsx}$. In \cite[Theorem 1.4]{Bordenave-Caputo-Chafai2014}, when $\bsa$ is normal, the same formulas for the support and density of $\rho_{\bsa+\sqrt{t}\bsx}$ as in (and prior to) \cite{Belinschi-Yin-Zhong2022,Bercovici-Zhong2022,Zhong2021} were proved. Then in \cite[Proposition 1.2]{Bordenave-Capitaine2016}, for general $\bsa$, a slightly stronger characterization for the support of $\rho_{\bsa+\sqrt{t}\bsx}$ was proved under the additional assumption that $\supp\rho_{\bsa+\sqrt{t}\bsx}=\spec(\bsa+\sqrt{t}\bsx)$.
We remark that all previous works on $\rho_{\bsa+\sqrt{t}\bsx}$ mentioned above focused on qualitative properties of $\rho_{\bsa+\sqrt{t}\bsx}$, such as its support and a general upper bound on its density. In contrast, our results concern precise asymptotics of the density of $\rho_{\bsa+\sqrt{t}\bsx}$ at the boundary in a typical situation.

Finally, sharp cutoff of the density at the edge as in \eqref{eq:sharp} was proved in \cite[Proposition 2.4]{Alt-Erdos-Kruger2018} under a different setting, with $\bsa=0$ but more general $\bsx$ corresponding to a large random square matrix $X$ whose entries have varying variances (in contrast to a circular element, the limit of a Ginibre matrix). Later in \cite[Theorem 2.5]{Alt-Kruger2021} this result was further generalized to $X$ with correlated entries but still with $\bsa=0$. However, the Brown measure is radially symmetric when $\bsa=0$ making the problem easier.

\subsection{Methods}
Our proof of Theorem \ref{thm:sharp} mainly involves detailed analysis of the Schwinger-Dyson equation associated to the Hermitization of $\bsa+\sqrt{t}\bsx$. The Hermitization is the operator-valued map defined by
\beq
\C\ni z\mapsto \begin{pmatrix} 0 & \bsa+\sqrt{t}\bsx-z\\(\bsa+\sqrt{t}\bsx-z)\adj & 0 \end{pmatrix}\in \C^{2\times 2}\otimes \caM,
\eeq
whose image is Hermitian as the name suggests. By the definition of the Brown measure, it suffices to study the family of spectral measures $\{\rho_{\absv{\bsa+\sqrt{t}\bsx-z}}:z\in\C\}$, which precisely matches that of the Hermitization up to symmetrization. Then the $*$-freeness of $\bsa$ and $\bsx$ gives rise to the Schwinger-Dyson equation for the resolvent of the Hermitization given by (see Proposition \ref{prop:Dyson})
\beq\label{eq:Dyson_intro}
M(\zeta)=\begin{pmatrix} -\left(\zeta+\frac{t}{2}\Tr M(\zeta)\right) & \bsa-z \\ (\bsa-z)\adj & -\left(\zeta+\frac{t}{2}\Tr M(\zeta)\right) \end{pmatrix}^{-1},\qquad \zeta\in\C,\im \zeta>0.
\eeq
Here, $M$ is the $\C^{2\times2}$-valued Stieltjes transform of $\bsa+\sqrt{t}\bsx-z$ given by
\beq
M(\zeta)\deq(\mathrm{Id}\otimes\brkt{\cdot})\left(\begin{pmatrix}-\zeta & \bsa+\sqrt{t}\bsx-z \\ (\bsa+\sqrt{t}\bsx-z)\adj & -\zeta\end{pmatrix}^{-1}\right).
\eeq
The main task in our proof is to study the Stieltjes transform $\Tr M(\ii\eta)$ when $z$ is close to the spectral edge and $\eta>0$ is small; see Lemma \ref{lem:v_asymp}. We remark that \cite{Zhong2021} first derived and used the equation \eqref{eq:Dyson_intro} for general $\bsa$, and likewise, our method is more related to \cite{Belinschi-Yin-Zhong2022,Bercovici-Zhong2022,Zhong2021} than \cite{Ho2022,Ho-Zhong2023}; see Section \ref{sec:prelim} for more details.

While the Dyson equation in \eqref{eq:Dyson_intro} played a crucial role also in \cite{Belinschi-Yin-Zhong2022,Bercovici-Zhong2022,Zhong2021}, we study it in a different regime.
The line of works \cite{Belinschi-Yin-Zhong2022,Bercovici-Zhong2022,Zhong2021} handled general $\bsa$, but only considered $z$ well inside the bulk (corresponding to $\im\Tr M(\ii\eta)\sim 1$ as $\eta\to 0$) or far outside of the spectrum ($\im\Tr M(\ii\eta) \lesssim\eta$).	
In contrast, to study the density $\rho(w)$ around an edge point $z$, one needs to take both $|w-z|$ and $\eta$ small. This poses a major difficulty since the Dyson equation is highly unstable in the joint limit $\absv{w-z},\eta\to0$. The precise behavior of the density along the boundary can only be detected if one takes the more involved limit $\eta\to 0$ first. We resolve this instability in Lemma \ref{lem:v_asymp} which is the main new technical part of our proof.

We also mention that the sharp edge phenomenon was covered earlier in \cite{Alt-Erdos-Kruger2018,Alt-Erdos-Kruger2021,Bourgade-Yau-Yin2014Edge} with somewhat more general $\bsx$, but only for $\bsa=0$. The Dyson equation depended only on $\absv{z}$ in these papers, so that the Brown measure was rotationally invariant. Consequently, the second phenomenon (b) was simply absent therein, and even the geometry of (a) was essentially one dimensional, i.e. one only needed to show that $\rho(|z|)$ has a sharp cut-off.
But with non-zero $\bsa$, we have to keep track of $z$ as a genuinely two-dimensional parameter in order to prove (a) and (b) along a general boundary (beyond circles) of $\supp\rho_{\bsa+\bsx}$.

\subsection{Organization}
In Section \ref{sec:result}, we rigorously define our model and state the main results. Section \ref{sec:ex} is devoted to examples and counterexamples, along with pictorial illustrations. In Section \ref{sec:prelim} we provide free probabilistic preliminary results. Finally in Section~\ref{sec:proof}, we prove the main result Theorem \ref{thm:sharp}. The remaining results, Theorems \ref{thm:higher} and \ref{thm:conn}, are proved respectively in Appendices \ref{append:irreg} and \ref{append:conn}.

\begin{nrem}
	For an operator $\bsa$ in a von Neumann algebra $\caM$, we define $\absv{\bsa}\deq (\bsa\bsa\adj)^{1/2}$, $\absv{\bsa}_{*}\deq (\bsa\adj\bsa)^{1/2}$, and 
	\beqs
	\re\bsa\deq\frac{\bsa+\bsa\adj}{2},\qquad\qquad \im\bsa\deq\frac{\bsa-\bsa\adj}{2\ii}.
	\eeqs
	For a von Neumann algebra $\caM$, we define $\bbH_{+}(\caM)\deq\{\bsa\in\caM:\im \bsa>0\}$ to be the open upper-half plane in $\caM$. When $\caM=\C$, we use the shorthand notation $\C_{+}=\bbH_{+}(\C)$. For each $n\in\N$, we define $M_{n}(\caM)\deq\C^{n\times n}\otimes \caM$ to be the $*$-algebra of $(n\times n)$ matrices over $\caM$.
\end{nrem}
\begin{nrem}
	We write $D(z,r)\deq\{w\in\C:\absv{w-z}<r\}$ for $w\in\C$ and $r>0$, and denote $\bbD\deq D(0,1)$. The integral with respect to the Lebesgue measure on $\C$ is denoted by 
	\beqs
	\int_{\C}f(z)\dd^{2}z.
	\eeqs
\end{nrem}
\begin{nrem}\label{nrem:asymp}
	For two functions $f,g$ on a set $I$ with $g\geq 0$, we write $f\lesssim g$ to denote that $\absv{f(i)}\leq Cg(i)$ for all $i\in I$ and a constant $C>0$ that do not depend on $i$. For $f,g\geq 0$, we write $f\sim g$ if $f\lesssim g$ and $g\lesssim f$. If $f,g$ are parametrized by multiple indices, say, $(i,j)$, then we write $f\lesssim_{j}g$ if $f(i,j)\leq C(j)g(i,j)$ for all $(i,j)$, where $C(j)>0$ does not depend on $i$. Likewise, we write $f\sim_{j}g$ if $f\lesssim_{j}g$ and $g\lesssim_{j}f$.
\end{nrem}

\section{Definitions and main results}\label{sec:result}
\begin{defn}\label{defn:model}
	Let $(\caM,\brkt{\cdot})$ be a $W\adj$-probability space, that is, $\caM$ is a unital von Neumann algebra and $\brkt{\cdot}$ is a faithful, tracial, normal state on $\caM$. Let $\bsx,\bsa\in\caM$ be a $*$-free pair such that $\bsx$ is a free circular element. 
\end{defn}
Recall that a collection of $*$-subalgebras $\{\caM_{i}:i\in I\}$ of $\caM$ is called $*$-free if, for any $\bsb_{1},\cdots,\bsb_{m}\in\caM$ with $\bsb_{j}\in \caM_{i_{j}}$, we have
\beq
\begin{cases}
	\brkt{\bsb_{j}}=0,\,\forall j\in\{1,\cdots,m\},\\
	i_{1}\neq i_{m},\,\,i_{j}\neq i_{j+1},\,\forall j\in\{1,\cdots,m-1\},\\
\end{cases}
\Longrightarrow 
\brkt{\bsb_{1}\bsb_{2}\cdots \bsb_{m}}=0.
\eeq
A collection of elements $\{\boldsymbol{c}_{i}:i\in I\}$ in $\caM$ is called $*$-free if the $*$-algebras generated by $\boldsymbol{c}_{i}$ are $*$-free. A free circular element in $\caM$ is defined by the sum $\bsx\deq(\bss_{1}+\ii\bss_{2})/\sqrt{2}$ where $(\bss_{1},\bss_{2})$ is a free pair of semicircular elements in $\caM$, that is, each $\bss_{i}$ is self-adjoint with spectral distribution given by
\beq
\brkt{f(\bss_{i})}=\int_{\R}f(x)\frac{\sqrt{4-x^{2}}}{2\pi}\lone_{[-2,2]}(x)\dd x,\qquad f\in C([-2,2]).
\eeq

\begin{defn}
	For each $\bsb\in\caM$, we denote the spectrum of $\bsb$ by $\spec(\bsb)$. We write $\rho_{\bsb}$ for the Brown measure of $\bsb$ with respect to $\brkt{\cdot}$ defined in \eqref{eq:Brown_fund}. For each $\eta>0$, we define the function $f_{\bsb,\eta}:\C\to(0,\infty)$ as
	\beq\label{eq:Gfn}
	f_{\bsb,\eta}(z)\deq\Brkt{\frac{1}{\absv{\bsb-z}^{2}+\eta^{2}}},
	\eeq
	and we further define $f_{\bsb}:\C\to(0,\infty]$ as
	\beq\label{eq:def_f}
	f_{\bsb}(z)\deq\Brkt{\frac{1}{\absv{\bsb-z}^{2}}}\equiv\lim_{\eta\to0}f_{\bsb,\eta}(z).
	\eeq
\end{defn}
\begin{rem}\label{rem:f_prop}
	The functions $f_{\bsb,\eta},f_{\bsb}$ have the following properties which can be proved with elementary calculations.
	\begin{enumerate}[(i)]
		
		\item $f_{\bsb}$ is real analytic in $\C\setminus\spec(\bsb)$.
		
		\item $f_{\bsb,\eta}$ is continuous for each $\eta>0$, and $f_{\bsb}$ is lower semi-continuous. In particular the set $\{z\in\C: f_{\bsb}(z)>c\}$ is open for each $c\in\R$.
		%
		\item $f_{\bsb}$ is strictly subharmonic in $\C\setminus\spec(\bsb)$, more precisely, for all $z\in\C\setminus\spec(\bsb)$ we have
		\beq\label{eq:subhar}
		\Delta f_{\bsb}(z)=4\frac{\partial ^{2}}{\partial z\partial\ol{z}}f_{\bsb}(z)=4\Brkt{\frac{1}{\absv{(\bsb-z)^{2}}^{2}}}>0.
		\eeq

		\item We can recover the measure $\rho_{\absv{\bsb-z}}$ from the function $\eta\mapsto f_{\bsb,\eta}(z)$ via moments of $\absv{\bsb-z}^{2}$. In particular, we can compute $\brkt{\log\absv{\bsb-\cdot}}$ and hence $\rho_{\bsb}$.
	\end{enumerate}
\end{rem}
\begin{rem}\label{rem:supp}
	If $\bsb$ is normal, we trivially have $\supp\rho_{\bsb}=\spec(\bsb)$. For a general $\bsb$ we have only $\supp\rho_{\bsb}\subset\spec(\bsb)$, as $z\mapsto\brkt{\log\absv{\bsb-z}}$ is harmonic in $\C\setminus\spec(\bsb)$. Although $\supp\rho_{\bsb}$ may be strictly smaller than $\spec(\bsb)$ in general, in most cases they are equal; see Remark \ref{rem:R} for more details.
\end{rem}

Before presenting our new results, we here record a previous result on the Brown measure of $\bsa+\sqrt{t}\bsx$ from \cite{Belinschi-Yin-Zhong2022,Zhong2021}:
\begin{thm}[{\cite[Theorem 7.10]{Belinschi-Yin-Zhong2022} and \cite[Theorem 4.2]{Zhong2021}}]\label{thm:prev} For each $t>0$, define the open domain $\caD_{t}\subset\C$ as
	\beq\label{eq:def_D}
	\caD_{t}\deq\left\{z\in\C:f_{\bsa}(z)>\frac{1}{t}\right\}.
	\eeq
	Then we have the following.
	\begin{itemize}
		\item[(i)] $\supp\rho_{\bsa+\sqrt{t}\bsx}=\ol{\caD}_{t}$.
		
		\item[(ii)] $\rho_{\bsa+\sqrt{t}\bsx}$ is absolutely continuous with respect to the Lebesgue measure in $\C$.
		
		\item[(iii)] The density of $\rho_{\bsa+\sqrt{t}\bsx}$ is bounded by $(\pi t)^{-1}$ in $\C$, and positive, real analytic in $\caD_{t}$.
		
	\end{itemize}
\end{thm}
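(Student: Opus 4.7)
The plan is to reduce the study of $\rho_{\bsa+\sqrt{t}\bsx}$ to a scalar subordination equation via the Hermitization trick. Introduce $H_{z}\deq\begin{pmatrix}0 & \bsa+\sqrt{t}\bsx-z\\(\bsa+\sqrt{t}\bsx-z)\adj & 0\end{pmatrix}\in M_{2}(\caM)$, so that \eqref{eq:Brown_fund} encodes $\rho_{\bsa+\sqrt{t}\bsx}$ in the family $\{\rho_{\absv{\bsa+\sqrt{t}\bsx-z}}\}_{z\in\C}$, or equivalently in the $M_{2}(\C)$-valued Stieltjes transform $M(\zeta;z)\deq(\mathrm{Id}_{2}\otimes\brkt{\cdot})(H_{z}-\zeta)^{-1}$. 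Since $\bsx$ is circular and $*$-free from $\bsa$, the matrix $\sqrt{t}\begin{pmatrix}0 & \bsx\\\bsx\adj & 0\end{pmatrix}$ is an $M_{2}(\C)$-valued semicircular element over $\mathrm{Id}_{2}\otimes\brkt{\cdot}$ whose covariance sends $a\mapsto t\cdot\diag(a_{22},a_{11})$; operator-valued subordination then produces the matrix Dyson equation \eqref{eq:Dyson_intro}.

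Setting $\zeta=\ii\eta$, the tracial identity $\brkt{(\absv{\bsa-z}^{2}+u^{2})^{-1}}=\brkt{(\absv{\bsa-z}_{*}^{2}+u^{2})^{-1}}=f_{\bsa,u}(z)$ implies that both diagonal entries of $M(\ii\eta)$ equal $\ii v f_{\bsa,v}(z)$ for a common $v=v(\eta;z)>0$; the off-diagonals of $M$ are determined afterwards but do not feed back because the covariance only sees the diagonal. Computing the $2\times 2$ block inverse in \eqref{eq:Dyson_intro} then collapses the matrix equation into the scalar self-consistent equation
\begin{equation}\label{eq:plan_fp}
v\bigl(1-tf_{\bsa,v}(z)\bigr)=\eta,
\end{equation}
coupled with the subordination identity $\eta f_{\bsa+\sqrt{t}\bsx,\eta}(z)=vf_{\bsa,v}(z)$, obtained by equating the two expressions for $M_{11}(\ii\eta;z)$. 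Since $u\mapsto f_{\bsa,u}(z)$ is continuous and strictly decreasing from $f_{\bsa}(z)$ to $0$ by Remark \ref{rem:f_prop}, letting $\eta\downarrow 0$ in \eqref{eq:plan_fp} yields a dichotomy: either $v(\eta;z)\to 0$ and necessarily $tf_{\bsa}(z)\leq 1$ (so $z\notin\caD_{t}$), or $v(\eta;z)\to v_{0}(z)>0$ where $v_{0}(z)$ is the unique positive solution of $tf_{\bsa,v}(z)=1$, solvable precisely when $z\in\caD_{t}$. This determines the support $\supp\rho_{\bsa+\sqrt{t}\bsx}=\ol{\caD_{t}}$, proving (i).

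For (ii) and (iii) I would recover the log-potential via the identity $\partial_{\ol{z}}\brkt{\log(\absv{\bsa+\sqrt{t}\bsx-z}^{2}+\eta^{2})}=-M_{12}(\ii\eta;z)$, where the same block-inverse computation identifies $M_{12}(\ii\eta;z)=\brkt{(\bsa-z)(\absv{\bsa-z}_{*}^{2}+v(\eta;z)^{2})^{-1}}$, converging to $\brkt{(\bsa-z)(\absv{\bsa-z}_{*}^{2}+v_{0}(z)^{2})^{-1}}$ as $\eta\downarrow 0$ for $z\in\caD_{t}$. Because $\partial_{v}f_{\bsa,v}(z)<0$ by Remark \ref{rem:f_prop}, the implicit function theorem applied to $tf_{\bsa,v}(z)=1$ ensures that $v_{0}:\caD_{t}\to(0,\infty)$ is real analytic, hence the density $\rho_{\bsa+\sqrt{t}\bsx}(z)=-\pi^{-1}\partial_{z}\brkt{(\bsa-z)(\absv{\bsa-z}_{*}^{2}+v_{0}(z)^{2})^{-1}}$ is real analytic on $\caD_{t}$. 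Expanding this $\partial_{z}$ by inserting $\partial_{z}v_{0}$ from the implicit function theorem, applying the operator identity $(\bsa-z)(\absv{\bsa-z}_{*}^{2}+v^{2})^{-1}(\bsa-z)\adj=1-v^{2}(\absv{\bsa-z}^{2}+v^{2})^{-1}$, and collapsing the result via the constraint $tf_{\bsa,v_{0}}(z)=1$, the density reduces to a manifestly non-negative expression; a Cauchy-Schwarz comparison of its numerator against the normalization $tf_{\bsa,v_{0}}(z)=1$ then yields both the strict positivity of $\rho$ on $\caD_{t}$ and the universal bound $\rho\leq 1/(\pi t)$, with absolute continuity (ii) automatic from the bounded density. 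The principal technical obstacle is precisely this last algebraic simplification: several terms must telescope in a way that crucially uses the circular (rather than general elliptic) structure of $\bsx$, which is what produces the sharp constant $1/(\pi t)$.
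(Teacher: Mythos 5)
The paper does not actually prove Theorem~\ref{thm:prev}; it cites it from \cite{Belinschi-Yin-Zhong2022,Zhong2021} and then, in Section~\ref{sec:prelim}, records the ingredients (Hermitization, the Dyson equation of Proposition~\ref{prop:Dyson}, the subordination function $v_{t}$ solving \eqref{eq:vv}, the regularized density formula of Lemma~\ref{lem:FC_smooth}, and the limiting formula \eqref{eq:dens_bulk}). Your proposal reproduces exactly these ingredients in exactly this order, so you are on the intended route.

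Two points deserve attention. First, a logical ordering issue in your treatment of (i): the equivalence ``$v_{0}(z)>0\iff z\in\caD_{t}$'' says nothing by itself about $\supp\rho_{\bsa+\sqrt{t}\bsx}$. To conclude $\supp\rho_{\bsa+\sqrt{t}\bsx}=\ol{\caD}_{t}$ you must first establish that the density vanishes off $\ol{\caD}_{t}$ (where $v_{0}=0$, so the log-potential is harmonic near $z$) and is \emph{strictly positive} on $\caD_{t}$, and the latter is precisely the positivity claim in (iii), read off from the first term of \eqref{eq:dens_bulk} since $v_{0}>0$ there. So (i) is a corollary of (iii), not of the $v_{0}$ dichotomy alone.

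Second, the step you flag as the ``principal technical obstacle'' is genuinely the crux, but it is navigable with exactly the Cauchy--Schwarz idea you invoke, and it is worth knowing why it closes. Writing $B=\bsa-z$, $G=(\absv{B}^{2}+v^{2})^{-1}$, $G_{*}=(\absv{B}_{*}^{2}+v^{2})^{-1}$, the constraint $t f_{\bsa,v_{0}}(z)=1$ reads $\langle G\rangle=1/t$, and formula \eqref{eq:dens_bulk} becomes
\begin{equation*}
\pi\rho(z)\;=\;v^{2}\brkt{GG_{*}}+\frac{\absv{\brkt{GBG}}^{2}}{\brkt{G^{2}}}.
\end{equation*}
Cauchy--Schwarz in the tracial state gives $\absv{\brkt{GBG}}^{2}\le\brkt{G\absv{B}^{2}G}\brkt{G^{2}}$, and the operator identity $\absv{B}^{2}G=1-v^{2}G$ turns the second term into $\brkt{G}-v^{2}\brkt{G^{2}}=\tfrac{1}{t}-v^{2}\brkt{G^{2}}$. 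Since $\absv{B}^{2}$ and $\absv{B}_{*}^{2}$ are equidistributed under the trace, $\brkt{G_{*}^{2}}=\brkt{G^{2}}$, and a second Cauchy--Schwarz yields $\brkt{GG_{*}}\le\brkt{G^{2}}$; hence $\pi\rho(z)\le 1/t$. Positivity on $\caD_{t}$ is immediate from the first term since $v_{0}>0$ there. For (ii), bounded $\rho_{t,\eta}$ plus the weak convergence in \eqref{eq:reg_conv} (using $\absv{v_{t}(z,y)-y}\le t^{-1/2}$ from \eqref{eq:v_bdd}) gives absolute continuity of the limit; ``bounded density implies absolutely continuous'' needs this limiting step, which your write-up glides over.

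With these two clarifications filled in, the proposal matches the argument underlying the cited theorem.
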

Recall from Remark \ref{rem:f_prop} (ii) that the domain $\caD_{t}$ is open, and it is always non-empty since $f_{\bsa}(z)=\infty$ except for $z$ in a $\rho_{\bsa}$-null set; see \eqref{eq:assump_check}--\eqref{eq:Lp} for a proof.

\begin{rem}
	
	Recall from \cite{Biane1997} that, for a free pair of Hermitian operators $(\bsb,\bss)$ in $\caM$ with a semicircular element $\bss$, the set $\supp(\rho_{\bsb+\sqrt{t}\bss})$ may not be increasing in $t$. In contrast, $\ol{\caD}_{t}$, the support of $\rho_{\bsa+\sqrt{t}\bsx}$, is always increasing in $t$ since $\caD_{t}$ increases by its definition. Also by \cite[Theorem 4.5]{Zhong2021} we have 
	\beq\label{eq:supp<D}
	\supp\rho_{\bsa}\subset \ol{\caD}_{t},\qquad \forall t>0.
	\eeq
\end{rem}

\subsection{Results}

We now present the new results of this paper. The main result, on the spectral edge of $\rho_{\bsa+\sqrt{t}\bsx}$, is obtained under the following mild regularity assumption on $\bsa$. Later in Section \ref{sec:assump_check}, we discuss how to check its validity in certain cases.
\begin{assump}\label{assump}
	 Fix  $t>0$. We assume $\spec(\bsa)\subset \caD_{t}$, that is, we have
	\beq\label{eq:assump_main}
	f_{\bsa}(z)=\Brkt{\frac{1}{\absv{\bsa-z}^{2}}}>\frac{1}{t},\qquad \forall z\in\spec(\bsa).
	\eeq
\end{assump}
\begin{rem}[Ubiquity of \eqref{eq:assump_main}]\label{rem:assump_typical}
	Observe that Assumption \ref{assump} automatically follows if both $\supp\rho_{\bsa}=\spec(\bsa)$ and $\supp\rho_{\bsa}\subset\caD_{t}$ hold true, that are slightly stronger than the trivially valid inclusions $\supp\rho_{\bsa}\subset\spec(\bsa)$ and $\supp\rho_{\bsa}\subset\overline{\caD}_{t}$ from Remark \ref{rem:supp} and \eqref{eq:supp<D}. While there are somewhat pathological counterexamples to $\supp\rho_{\bsa}=\spec(\bsa)$ or $\supp\rho_{\bsa}\subset\caD_{t}$ (see Remark \ref{rem:R} and Theorem \ref{thm:sharp}), one can still think of both as typical situations. For example, the first equality typically holds for $*$-limits of random matrices, and the second inclusion is satisfied unless $\rho_{\bsa}$ has a fast decay at some point in the support; see Remark \ref{rem:R} and \eqref{eq:Lp}, respectively. 
	We also remark that the first part $\supp\rho_{\bsa}=\spec(\bsa)$ can be directly compared with \cite[Assumption (A3)]{Bordenave-Capitaine2016}, which reads as $\supp\rho_{\bsa+\sqrt{t}\bsx}=\spec(\bsa+\sqrt{t}\bsx)$.
\end{rem}
\begin{rem}[Regularity of $\partial\caD_{t}$]\label{rem:assump_conseq}
	Note that $\spec(\bsa)\subset\caD_{t}$ implies the strict inclusion $\spec(\bsa)\subsetneq\caD_{t}$ since $\spec(\bsa)$ is compact and $\caD_{t}$ is open. 
	Thus, recalling the function $f_{\bsa}$ is real analytic (hence continuous) and strictly subharmonic  on $\C\setminus\spec(\bsa)$, we find that Assumption 7 implies
	\beq\label{eq:bd=level}
	\partial\caD_{t}=\left\{z\in\C:f_{\bsa}(z)=\frac{1}{t}\right\}.
	\eeq
	Then it also immediately follows that $\partial\caD_{t}$ is an analytic curve (see also Remark \ref{rem:C}), hence of Lebesgue measure zero in $\C$.
\end{rem}

\begin{thm}\label{thm:sharp}
	Let $t>0$ be fixed and $\bsa$ satisfy Assumption \ref{assump}. Define the set of critical points
	\beq\label{eq:def_C}
	\caC_{t}\deq\left\{z\in\partial\caD_{t}:\nabla f_{\bsa}(z)=0\right\},
	\eeq
	where $\nabla$ is the usual gradient in $\C\cong\R^{2}$.  Then there exists a density $\rho$ for $\rho_{\bsa+\sqrt{t}\bsx}$ that satisfies the following.
	\begin{itemize}
		\item[(i)] {\rm(Outside)} $\rho$ is identically zero on $\C\setminus\ol{\caD_{t}}=\{z\in\C:f_{\bsa}(z)<1/t\}$.
		\item[(ii)] {\rm(Bulk)} For each $\delta>0$, there exists a constant $c_{0}>0$ such that
		\beq\label{eq:sharp_bulk}
		\rho(z)\geq c_{0} \qquad \forall z\in\{z\in\caD_{t}:\dist(z,\partial\caD_{t})>\delta\}.
		\eeq
		
		\item[(iii)] {\rm(Sharp edge)} Let $z\in\partial\caD_{t}\setminus\caC_{t}$. The density $\rho(w)$ satisfies the following asymptotics as $w\to z$ in~$\caD_{t}$;
		\beq\label{eq:edge_sharp}
		\rho(w)=\frac{1}{4\pi}\brkt{\absv{\bsa-z}^{-4}}^{-1}\absv{\nabla f_{\bsa}(z)}^{2}+O(\absv{w-z}).
		\eeq
		In particular, there exist constants $c_{1},C_{1},\delta_{1}>0$ such that
		\beq\label{eq:edge_sharp_1}
		c_{1}\lone_{\ol{\caD}_{t}}(w)\leq \rho(w)\leq C_{1}\lone_{\ol{\caD}_{t}}(w),\qquad  \absv{w-z}<\delta_{1}.
		\eeq
		\item[(iv)] {\rm(Quadratic edge)} Let $z\in\caC_{t}$. The $(2\times 2)$ Hessian matrix $H[f_{\bsa}]$ of $f_{\bsa}$ (viewed as a function in $\R^{2}\cong\C$) at $z$ satisfies  $\Tr H[f_{\bsa}](z)>0$,  and the density $\rho(w)$ satisfies the following quadratic asymptotics as $w\to z$ in $\caD_{t}$;
		\beq\label{eq:edge_quad}
		\rho(w)= Q_{z}[w-z]+O(\absv{w-z}^{3}),
		\eeq
		where $u\mapsto Q_{z}[u]$ is a real-valued quadratic form on $\C$ defined by\footnote{In \eqref{eq:quad_def}, we identified $u\in\C$ with $(\re u,\im u)^{\tp}\in\R^{2}$ and $H[f_{\bsa}](z)$ with the corresponding $2\times 2$ real matrix. This choice is purely cosmetic as $\brkt{u,H[f_{\bsa}](z)u}$ and $\norm{H[f_{\bsa}](z)u}$ do not depend on the choice of an orthonormal basis of $\C\cong\R^{2}$.}
		\beq\label{eq:quad_def}
		Q_{z}[u]\deq \frac{1}{2\pi}\frac{\brkt{\absv{\bsa-z}^{-2}\absv{\bsa-z}_{*}^{-2}}}{\brkt{\absv{\bsa-z}^{-4}}}\brkt{u,H[f_{\bsa}](z)u}
		+\frac{1}{4\pi}\frac{1}{\brkt{\absv{\bsa-z}^{-4}}}\norm{H[f_{\bsa}](z)u}^{2}.
		\eeq
		Consequently, for any $\kappa\in(0,1)$ there exist constants $c_{2},C_{2},\delta_{2}>0$ such that
		\beq\label{eq:edge_quad_1}\begin{aligned}
			\rho(w)&\geq c_{2}\lone_{\caD_{t}}(w)\absv{w-z}^{2}, &
			\qquad \absv{w-z}&<\delta_{2},\,(w-z)\in\caS(z,\kappa) \\ 
			\rho(w)&\leq C_{2}\lone_{\caD_{t}}(w)\absv{w-z}^{2}, &
			\qquad \absv{w-z}&<\delta_{2}
		\end{aligned}\eeq
		where $\caS(z,\kappa)$ is the angular sector (see Figure \ref{fig:tang}) defined by
		\beq
		\caS(z,\kappa)\deq \left\{w\in\C\cong\R^{2}:\frac{\norm{P_{z}w}^{2}}{\norm{w}^{2}}<1-\kappa \right\}
		\eeq
		and $P_{z}\in\R^{2\times 2}$ denotes the orthogonal projection onto the null space of $H[f_{\bsa}](z)$. Note that if $H[f_{\bsa}](z)$ is not singular then $\caS(z,\kappa)=\C$ for any $\kappa\in(0,1)$.
	\end{itemize}
	The constant $c_{0}$ in (ii) depends only on $\bsa$ and $t$. The constants $c_{1},C_{1},C_{2},\delta_{1},\delta_{2}$ and the implicit constants in \eqref{eq:edge_sharp} and \eqref{eq:edge_quad} depend only on $\bsa$ and $z$. Finally, $c_{2}$ depends only on $\bsa,z$, and $\kappa$. Note that given $\bsa$ and $z\in\partial\caD_{t}$ we can recover $t$ via $1/t=\brkt{\absv{\bsa-z}^{-2}}$.
\end{thm}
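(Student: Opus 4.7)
The plan is to reduce computing the Brown measure density to a scalar self-consistent equation via the Hermitization, establish a closed formula for $\rho$ on $\caD_t$, and then analyze its boundary behavior; part~(i) is immediate from Theorem~\ref{thm:prev}(i). Writing $M(\zeta)=(\mathrm{id}\otimes\brkt{\cdot})[(H(z)-\zeta I)^{-1}]$ as in the introduction, the trace equivalence $\brkt{g(|\bsa+\sqrt{t}\bsx-z|^2)}=\brkt{g(|\bsa+\sqrt{t}\bsx-z|_*^2)}$ (automatic in a tracial algebra) together with $*$-freeness reduces the matrix Schwinger-Dyson equation along $\zeta=\ii\eta$ to the scalar equation
\begin{equation*}
v-\eta=tv\,f_{\bsa,v}(z),\qquad v=v(z,\eta)>0.
\end{equation*}
The positive limit $v_0(z)=\lim_{\eta\to 0}v(z,\eta)$ is uniquely characterized by $tf_{\bsa,v_0(z)}(z)=1$ on $\caD_t$ and vanishes outside $\overline{\caD_t}$. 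Matching the off-diagonal entry of $M$ identifies
\begin{equation*}
\partial_{\bar z}L(z)=-\brkt{(\bsa-z)(|\bsa-z|_*^2+v_0(z)^2)^{-1}},\qquad z\in\caD_t,
\end{equation*}
where $L(z)=\brkt{\log|\bsa+\sqrt{t}\bsx-z|^2}$ and $\pi\rho(z)=\partial_z\partial_{\bar z}L(z)$.

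Differentiating $\partial_{\bar z}L$ in $z$ on $\caD_t$, using the operator identity $(\bsa-z)(|\bsa-z|_*^2+v^2)^{-1}=(|\bsa-z|^2+v^2)^{-1}(\bsa-z)$, trace cyclicity, and the implicit formula $\partial_{\bar z}v_0=T_{v_0}/(2v_0 K_{v_0})$ coming from $tf_{\bsa,v_0(z)}(z)=1$, I expect the closed form
\begin{equation*}
\pi\rho(z)=v_0(z)^2\,\tilde K_{v_0(z)}(z)+\frac{|T_{v_0(z)}(z)|^2}{K_{v_0(z)}(z)},\qquad z\in\caD_t,
\end{equation*}
where $\tilde K_v(z)\deq\brkt{(|\bsa-z|^2+v^2)^{-1}(|\bsa-z|_*^2+v^2)^{-1}}$, $K_v(z)\deq\brkt{(|\bsa-z|^2+v^2)^{-2}}$, and $T_v(z)\deq\brkt{(\bsa-z)(|\bsa-z|_*^2+v^2)^{-2}}=\partial_{\bar z}f_{\bsa,v}(z)$ at fixed $v$. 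On any compact subset of $\caD_t$, $v_0$ is bounded below and every trace is strictly positive and continuous, so the sum is uniformly bounded below, giving~(ii).

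For the edge asymptotics, Assumption~\ref{assump} gives $\partial\caD_t\cap\spec(\bsa)=\emptyset$, so $f_\bsa$ and all the relevant traces are real-analytic in a neighborhood of $\partial\caD_t$. Expanding the defining equation $tf_{\bsa,v_0}(w)=1$ in $v_0$ yields
\begin{equation*}
v_0(w)^2=\frac{f_\bsa(w)-1/t}{\brkt{|\bsa-w|^{-4}}}\bigl(1+O(v_0^2)\bigr),
\end{equation*}
so the vanishing rate of $v_0$ at $\partial\caD_t$ is governed by $f_\bsa-1/t$. At $z_0\in\partial\caD_t\setminus\caC_t$, the first-order Taylor expansion gives $v_0(w)\sim\sqrt{\dist(w,\partial\caD_t)}$; the first density term then vanishes while the second converges to $|\partial_{\bar z}f_\bsa(z_0)|^2/\brkt{|\bsa-z_0|^{-4}}=|\nabla f_\bsa(z_0)|^2/(4\brkt{|\bsa-z_0|^{-4}})$, yielding~\eqref{eq:edge_sharp}. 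At $z_0\in\caC_t$, $\nabla f_\bsa(z_0)=0$ and the second-order Taylor expansion gives $f_\bsa(w)-1/t=\tfrac{1}{2}\brkt{u,H[f_\bsa](z_0)u}+O(|u|^3)$ with $\Tr H[f_\bsa](z_0)>0$ by strict subharmonicity (Remark~\ref{rem:f_prop}(iii)), while $2\partial_{\bar z}f_\bsa(w)=H[f_\bsa](z_0)u+O(|u|^2)$ viewed as a complex number gives $|\partial_{\bar z}f_\bsa(w)|^2=\norm{H[f_\bsa](z_0)u}^2/4+O(|u|^3)$. The two density terms then contribute $\tfrac{1}{2}\brkt{u,Hu}\cdot\tilde K_0(z_0)/K_0(z_0)$ and $\norm{Hu}^2/(4K_0(z_0))$ respectively, combining to $\pi Q_{z_0}[u]$ and proving~\eqref{eq:edge_quad}; the sector $\caS(z_0,\kappa)$ appears only in the lower bound because $\brkt{u,Hu}$ may vanish along $\ker H$ when $H$ is singular.

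The main technical obstacle, and the central new ingredient of the proof, is the quantitative joint control of the Dyson-equation solution $v(z,\eta)$ in the doubly-degenerate limit $(w,\eta)\to(z_0,0)$: the scalar Dyson equation becomes unstable on $\partial\caD_t$, where the linearization in $v$ is singular, so $\eta\to 0$ must be taken first with uniform error estimates in $w$. This is precisely the content of the auxiliary Lemma~\ref{lem:v_asymp} flagged in the introduction. Once that control is in place, the expansions above reduce to careful but essentially routine calculus and trace identities, which also yield the stated error terms $O(|w-z_0|)$ in \eqref{eq:edge_sharp} and $O(|w-z_0|^3)$ in \eqref{eq:edge_quad}.
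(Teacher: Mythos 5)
Your proposal follows the paper's proof essentially step for step: the same scalar subordination equation for $v$, the same closed-form density on $\caD_t$ (matching \eqref{eq:dens_bulk} and \eqref{eq:rho_in}), the same use of Assumption~\ref{assump} to get analyticity of $f_{\bsa}$ near $\partial\caD_t$, the expansion $v_0(w)^2\approx(f_{\bsa}(w)-1/t)/\brkt{\absv{\bsa-w}^{-4}}$ from the defining equation, and the same Taylor expansions distinguishing $\caC_t$ from $\partial\caD_t\setminus\caC_t$, including the role of $\ker H[f_{\bsa}](z)$ and the sector $\caS(z,\kappa)$ in the lower bound. One small remark: for Theorem~\ref{thm:sharp} itself only the $\eta=0$ asymptotics \eqref{eq:v0_asymp} are used, so the two-variable joint control in Lemma~\ref{lem:v_asymp} that you flag as the central obstacle is actually invoked here only through its $\eta=0$ specialization (the full two-variable statement is needed for the supplementary Lemma~\ref{lem:dens1} on the regularized density at the edge).
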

We prove Theorem \ref{thm:sharp} in Section \ref{sec:proof}.
An important feature of Theorem \ref{thm:sharp} is that $\rho$ typically has a jump at the edge of its support, but at special points, when $\rho$ decays at the edge, only the quadratic rate is possible (at least within a cone). At first glance, the Taylor expansion in \eqref{eq:edge_quad} might seem to indicate that a higher order decay for $\rho$ is also possible by choosing $\bsa$ carefully. However, under Assumption \ref{assump}, we will show that $\Tr H[f_{\bsa}](z)>0$ for any $z\in\partial\caD_{t}$ so that $H[f_{\bsa}](z)$ is never fully degenerate. Actually, a higher-order decay can happen without Assumption \ref{assump}; see Theorem \ref{thm:higher} below.

\begin{rem}[Atypicality of quadratic edges]\label{rem:C}
	The set $\caC_{t}$ of critical points is small in the following two senses. 
	Firstly, for each $t>0$, by {\L}ojasiewicz stratification theorem (see e.g. \cite[Section 3.2]{Hardt1975}) $\partial\caD_{t}$ is a compact analytic manifold of dimension at most $1$ (since $f_{\bsa}$ is non-constant by \eqref{eq:subhar}), and $\caC_{t}$ is an analytic submanifold of $\partial\caD_{t}$. 
	Consequently, both $\partial\caD_{t}$ and $\caC_{t}$ have finitely many connected components, and each component of $\caC_{t}$ is either a singleton (for being of lower dimension than $\partial\caD_{t}$) or identical to a whole component of $\partial\caD_{t}$. 
	Secondly, notice that the set of $t$'s for which $\caC_{t}\neq\emptyset$ is precisely the set of critical \emph{values} of the real analytic function $f_{\bsa}$, i.e. we may write the set of such $t$'s as
	\beq
	T_{\bsa}\deq \left\{t>0:\frac{1}{t}\in f_{\bsa}(\wt{\caC})\right\},\qquad \wt{\caC}\deq \bigcup_{t>0}\caC_{t}=\{z\in\C\setminus\spec(\bsa):\nabla f_{\bsa}(z)=0\}.
	\eeq
	The set $\wt{\caC}$ of critical points is a (possibly non-compact) analytic manifold, thus has locally finitely (hence countably) many connected components, again by {\L}ojasiewicz stratification theorem. Since $f_{\bsa}$ is constant on each component of $\caC$, it follows that the set $T_{\bsa}$ is at most countable. We also remark that the set $\wt{\caC}$ is bounded by $\norm{\bsa}$; indeed, for example when $x>\norm{\bsa}$ we have that
	\beq\label{eq:convex_no_C}
	\frac{\partial f_{\bsa}}{\partial \re z}(x)
	=\frac{\partial f_{\bsa}}{\partial z}(x)+\frac{\partial f_{\bsa}}{\partial \ol{z}}(x)
	=2\Brkt{(\re \bsa-x) \frac{1}{\absv{(\bsa-x)^{2}}^{2}}}<0.
	\eeq
\end{rem}

\begin{rem}[Localized result]\label{rem:sharp_local}
	Notice that Theorem \ref{thm:sharp} (iii) and (iv) is a dichotomy of the density, around each point in the whole boundary $\partial\caD_{t}$ under the global assumption \eqref{eq:assump_main}. With only minor modification to the proof, we can prove a similar result locally and quantitatively for a given point on the boundary;
	if $z\in\partial\caD_{t}$ and $z\in\C\setminus\spec(\bsa)$, then either (iii) or (iv) holds true for $\rho$ around $z$ depending on whether $\absv{\nabla f_{\bsa}(z_{0}}$ is zero or not. In this case, for each fixed constant $C>0$, the implicit constants in \eqref{eq:edge_sharp} and \eqref{eq:edge_quad} can be chosen uniformly over $\bsa$ and $z$ satisfying $\norm{\bsa}\leq C$ and $\norm{(\bsa-z)^{-1}}\leq C$.
\end{rem}

\begin{rem}[Shape of $\supp\rho_{\bsa+\sqrt{t}\bsx}$ around a quadratic edge]
	For $z\in\caC_{t}$, the (local) shape of the $\caD_{t}$ is determined by the type of the critical point $z$. 
	If $z$ is a local minimum of $f_{\bsa}$, in particular if $\det H[f_{\bsa}](z)>0$, then the support is locally simply connected; see Figure~\ref{fig:44}.
	If $\det H[f_{\bsa}](z)<0$, then the support locally resembles a double cone; see Figure~\ref{fig:42}. The shape when $\det H[f_{\bsa}](z)=0$ is sensitive to higher partial derivatives of $f_{\bsa}$ in the degenerate direction; see Figure \ref{fig:tang}. We also remark that, after our work first appeared, the density of $\rho_{\bsa+\sqrt{t}\bsx}$ around such a degenerate critical point was studied in detail in the recent work \cite{Alt-Kruger2024Brown}.
\end{rem}

In the next theorem, we construct an $\bsa$ so that $\rho_{\bsa+\sqrt{t}\bsx}$ has any other rates of decay than quadratic at a specific point on the edge; see Appendix \ref{append:higher} for the proof.
\begin{thm}\label{thm:higher}
	Fix $p,q>0$ and let $\bsa$ be normal whose spectral distribution has a density on $\R^{2}\cong\C$ given by
	\beq\label{eq:two_side_Jacobi}
	\rho_{\bsa}(x+\ii y)={(p+1)(q+1)}x^{p}y^{q}\lone_{[0,1]}(x)\lone_{[0,1]}(y).
	\eeq
	If $t>0$ is such that
	\beq
	\frac{1}{t}>f_{\bsa}(0)={(p+1)(q+1)}\int_{0}^{1}\int_{0}^{1}\frac{x^{p}y^{q}}{x^{2}+y^{2}}\dd x\dd y,
	\eeq
	then the density $\rho$ of $\rho_{\bsa+\sqrt{t}\bsx}$ satisfies, for each constant $c>0$, 
	\beq
	\rho(z)\sim_{c} \absv{z}^{p+q}\qquad \text{as}\,\,z\to 0\,\,\text{in}\,\,\{z:0<c\re z\leq \im z\leq c^{-1}\re z\}.
	\eeq
\end{thm}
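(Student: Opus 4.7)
The plan is to reduce to the scalar Dyson equation valid for normal $\bsa$, obtain an explicit density formula, and perform a singular-integral analysis as $z\to 0$ inside the cone. Since $\bsa$ is normal with $\rho_{\bsa}$ given by \eqref{eq:two_side_Jacobi}, the $\C^{2\times 2}$-valued Dyson equation \eqref{eq:Dyson_intro} collapses to a scalar equation: for each $z\in\caD_{t}$ there is a unique $h(z)>0$ with
\[
\frac{1}{t}=\int_{\C}\frac{d\rho_{\bsa}(w)}{|w-z|^{2}+h(z)}=f_{\bsa,\sqrt{h(z)}}(z).
\]
The subordination / Schwinger--Dyson analysis of \cite{Belinschi-Yin-Zhong2022,Zhong2021,Ho-Zhong2023} gives
\[
\brkt{\log|\bsa+\sqrt{t}\bsx-z|^{2}}=\int\log\bigl(|w-z|^{2}+h(z)\bigr)\,d\rho_{\bsa}(w)-\frac{h(z)}{t},
\]
and applying $\pi^{-1}\partial_{z}\partial_{\bar z}$ together with the envelope theorem (since $h(z)$ is a stationary point in $h$ of the right-hand side) yields
\[
\rho(z)=\frac{1}{\pi}\left(h(z)A(z)+\frac{|B(z)|^{2}}{A(z)}\right),
\]
where $A(z)\deq\int(|w-z|^{2}+h(z))^{-2}d\rho_{\bsa}(w)$ and $B(z)\deq\int(w-z)(|w-z|^{2}+h(z))^{-2}d\rho_{\bsa}(w)$. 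The formula is consistent with Theorem~\ref{thm:prev}(iii) and reproduces $\rho\equiv 1/(\pi t)$ on $\{|z|<\sqrt{t}\}$ when $\bsa=0$.

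For $z=a+ib$ in the cone, $a,b>0$ and $a\sim b\sim|z|$, so $z\in(0,1)^{2}\subset\caD_{t}$ (since $f_{\bsa}\equiv+\infty$ on the interior of $\supp\rho_{\bsa}$). Switching to polar coordinates centered at $(a,b)$ and isolating the singularity of the integrand at $w=z$, a standard expansion yields
\[
\int\frac{d\rho_{\bsa}(w)}{|w-z|^{2}+h}=\pi(p+1)(q+1)\,a^{p}b^{q}\log\frac{1}{h}\,(1+o(1))+O(1)\qquad\text{as }h\to 0.
\]
Setting the left-hand side equal to $1/t$ and solving gives
\[
h(z)=\exp\!\left(-\frac{1+o(1)}{\pi t\,(p+1)(q+1)\,a^{p}b^{q}}\right),
\]
decaying faster than any polynomial in $|z|$. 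The same local analysis applied to $A(z)$ gives $A(z)=\pi(p+1)(q+1)a^{p}b^{q}/h(z)\cdot(1+o(1))$, so
\[
h(z)A(z)=\pi(p+1)(q+1)\,a^{p}b^{q}\,(1+o(1))\sim|z|^{p+q}
\]
within the cone.

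For the cross term $|B|^{2}/A$, the leading $a^{p}b^{q}$-contribution to $B(z)$ from the neighborhood of $w=z$ vanishes by angular symmetry ($\int_{0}^{2\pi}e^{i\theta}d\theta=0$), so the first surviving piece comes from the Taylor expansion $u^{p}v^{q}=a^{p}b^{q}+pa^{p-1}b^{q}(u-a)+qa^{p}b^{q-1}(v-b)+\cdots$. This yields $|B(z)|\lesssim(a^{p-1}b^{q}+a^{p}b^{q-1})\log(1/h(z))\sim 1/|z|$ in the cone, hence $|B|^{2}/A\lesssim h(z)/|z|^{p+q+2}$, which is super-exponentially small as $z\to 0$. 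Combining these estimates gives $\rho(z)=\pi^{-1}h(z)A(z)(1+o(1))\sim|z|^{p+q}$ in the cone, as claimed. The main technical obstacle is making the three singular-integral asymptotics uniform as $z\to 0$ in the cone, carefully controlling both the contribution from the "regular" part of each integral (away from $w=z$, which is finite but must not overwhelm the logarithmic leading order) and the higher-order Taylor terms of $u^{p}v^{q}$ against the two-dimensional kernel.
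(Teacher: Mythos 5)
Your proposal follows essentially the same route as the paper's Appendix~A.1: reduce to the scalar subordination equation $\int(|w-z|^2+h)^{-1}\,d\rho_{\bsa}(w)=1/t$ for $h=v_t(z,0)^2$, show that $h$ is super-exponentially small in $|z|$ inside the cone, then insert $h$ into the normal-$\bsa$ specialization of the density formula \eqref{eq:dens_bulk} and extract the leading $|z|^{p+q}$ asymptotic from the near-$z$ part of the integrals. The one place where you genuinely depart from and improve on the paper's argument is the cross term $|B|^2/A$: the paper bounds $|B|$ crudely by $(|x-E|+|y-E'|)$ in the numerator, obtaining $|B|\lesssim E^{p+q}/v$ and hence only $|B|^2/A\lesssim E^{p+q}$ (comparable to, not smaller than, the first term — the lower bound comes entirely from $hA$); you exploit the vanishing of $\int_0^{2\pi}e^{i\theta}\,d\theta$ against the constant part of the Taylor expansion of $u^pv^q$ around $z$ to get the much sharper $|B|\lesssim 1/|z|$, making the cross term super-exponentially small. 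Both suffice for the stated conclusion $\rho(z)\sim_c|z|^{p+q}$, but your version shows the first term is the whole leading order. Two minor points to tidy in a full write-up: (i) the far-field part of $\int(|w-z|^2+h)^{-1}d\rho_{\bsa}$ does not vanish but tends to $f_{\bsa}(0)$ as $z\to0$, so the constant in your exponent for $h$ should be $(1-tf_{\bsa}(0))$ rather than $1$ — this is exactly where the hypothesis $1/t>f_{\bsa}(0)$ enters, and it does not affect $\log(1/h)\sim|z|^{-(p+q)}$; (ii) the envelope theorem covers only the first $z$-derivative — the second derivative requires differentiating $h(z)$ via the stationarity relation, and that term is precisely what produces $|B|^2/A$, so the phrasing should be adjusted. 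Also, the angular cancellation needs the inner region to be a disk $D(z,R)$ with $R\lesssim\min(\re z,\im z)$; the discrepancy with a box of side $\sim|z|$ contributes only $O(|z|^{p+q-1})\ll|z|^{-1}$, so the estimate survives, but this should be said.
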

Note that if $\bsa$ is as in Theorem \ref{thm:higher} with $p,q\leq 0$, then $f_{\bsa}\equiv\infty$ on $\spec(\bsa)=[0,1]^{2}$, so that $\bsa$ satisfies Assumption \ref{assump} for all $t>0$. On the other hand, using that $\supp\rho_{\bsa}=[0,1]^{2}$ is convex, one can easily show that $\absv{\nabla f_{\bsa}}>0$ on $\C\setminus\spec(\bsa)$ following \eqref{eq:convex_no_C}. Hence, for $p,q\leq 0$, the density $\rho$ has sharp decay at the edge, i.e. $\rho\sim \lone_{\ol{\caD}_{t}}$ by Theorem \ref{thm:sharp} (ii) and (iii).

As a last result, we prove that for general $\bsa\in\caM$ the number of connected components of $\supp\rho_{\bsa+\sqrt{t}\bsx}$ decreases, modulo the spectrum of $\bsa$, as $t$ increases. See Appendix \ref{append:conn} for its proof.
\begin{thm}\label{thm:conn}
	The number of connected components of $\ol{\caD}_{t}\cup\spec(\bsa)$ decreases in $t>0$.
\end{thm}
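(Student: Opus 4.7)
The plan is to exploit the evident monotonicity $\caD_{t_1}\subseteq \caD_{t_2}$ for $t_1<t_2$ (immediate from $\caD_t=\{f_{\bsa}>1/t\}$), combined with the following structural fact: every connected component of $\caD_t$ has closure meeting $\spec(\bsa)$. Granted this, the inclusion $\ol{\caD}_{t_1}\cup\spec(\bsa)\subseteq \ol{\caD}_{t_2}\cup\spec(\bsa)$ induces a surjective map on connected components, and the theorem follows at once.

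To establish the structural fact, I would argue by contradiction. Suppose $U$ is a connected component of $\caD_t$ with $\ol{U}\cap\spec(\bsa)=\emptyset$. Since $f_{\bsa}(z)\leq (\absv{z}-\norm{\bsa})^{-2}\to 0$ as $\absv{z}\to\infty$, the set $\caD_t$ (and hence $\ol{U}$) is bounded. By Remark \ref{rem:f_prop}(i) and (iii), $f_{\bsa}$ is continuous and strictly subharmonic in a neighborhood of the compact set $\ol{U}$. Since distinct components of an open set have pairwise disjoint closures within that open set, $\partial U\subseteq \partial \caD_t$; on $\partial U$ we therefore have $f_{\bsa}\leq 1/t$ (from outside $\caD_t$) and $f_{\bsa}\geq 1/t$ (by continuity from $U$), hence $f_{\bsa}\equiv 1/t$ on $\partial U$. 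The weak maximum principle for subharmonic functions then forces $\sup_{\ol{U}}f_{\bsa}=1/t$, contradicting $f_{\bsa}>1/t$ on $U$.

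For the counting step, let $C$ be any connected component of $\ol{\caD}_{t_2}\cup\spec(\bsa)$. If $C\cap \caD_{t_2}\neq\emptyset$, then $C$ contains some component $U$ of $\caD_{t_2}$; by the structural fact $\ol{U}$ meets a component $K$ of $\spec(\bsa)$, and $K\subseteq C$ by maximality. Otherwise $C$ is contained in $\spec(\bsa)$ and is thus itself a component of $\spec(\bsa)$. Either way, $C$ contains some component $K$ of $\spec(\bsa)$, which also lies in the smaller set $\ol{\caD}_{t_1}\cup\spec(\bsa)$ and hence in a unique component $C'$ thereof. Since $C$ and $C'$ are both connected subsets of $\ol{\caD}_{t_2}\cup\spec(\bsa)$ containing $K$, maximality of $C$ gives $C'\subseteq C$, proving that the inclusion map on components is surjective. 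The main technical point is the structural fact via the maximum principle, where one must carefully verify continuity and subharmonicity of $f_{\bsa}$ up to $\partial U$ and boundedness of $\caD_t$ under the contradiction hypothesis; once these are in hand, the remainder of the argument is a purely formal topological count.
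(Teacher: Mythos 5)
Your proposal takes a genuinely different route from the paper's. The paper argues by contradiction: if the component count increased going from $t_-$ to $t_+$, then by pigeonhole some ``new'' component $D_j$ of $\ol{\caD}_{t_+}\cup\spec(\bsa)$ would be disjoint from $\spec(\bsa)$, whence $f_{\bsa}\geq 1/t_+$ on the compact set $D_j$ and one can locate an interior local maximum of $f_{\bsa}$ in $\C\setminus\spec(\bsa)$, contradicting $\Delta f_{\bsa}>0$. You instead isolate a reusable structural fact (each component $U$ of $\caD_t$ has $\ol U\cap\spec(\bsa)\neq\emptyset$, via the max principle applied to $f_{\bsa}\equiv 1/t$ on $\partial U$) and then deduce surjectivity of the induced map on $\pi_0$, from which monotonicity of the count is formal. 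Both arguments invoke subharmonicity of $f_{\bsa}$ in essentially the same place. Your structural-fact proof is correct (including the observations that $\caD_t$ is bounded and $\partial U\subseteq\partial\caD_t$), and it is a cleaner way to package the max-principle step.

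There is, however, a gap in the counting step. You assert: ``Otherwise [$C\cap\caD_{t_2}=\emptyset$] $C$ is contained in $\spec(\bsa)$.'' This is not automatic. A priori a component $C$ of $\ol{\caD}_{t_2}\cup\spec(\bsa)$ could consist entirely of points of $\partial\caD_{t_2}\setminus\spec(\bsa)$ without containing any actual piece of $\caD_{t_2}$ (think of a closed set built from infinitely many disjoint pieces whose closure acquires isolated limit points; being in the closure of an open set does not by itself place you in the closure of a \emph{single} component). To close this, you need to show that every $z\in\partial\caD_{t_2}\setminus\spec(\bsa)$ lies in $\ol U$ for some component $U$ of $\caD_{t_2}$; this does hold here, but it uses the specific structure of $f_{\bsa}$. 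Concretely: $f_{\bsa}$ is real analytic at $z$ with $f_{\bsa}(z)=1/t_2$ and $\Delta f_{\bsa}(z)=\Tr H[f_{\bsa}](z)>0$, so either $\nabla f_{\bsa}(z)\neq 0$ (then move along $\nabla f_{\bsa}(z)$) or $\nabla f_{\bsa}(z)=0$ and the Hessian has a positive eigenvalue with eigenvector $u$ (then $f_{\bsa}(z+su)=1/t_2+\tfrac{1}{2}\lambda s^2+O(s^3)>1/t_2$ for small $s\neq 0$). In either case a short path $s\mapsto z+su$, $0<s<\delta$, stays in $\caD_{t_2}$ and limits to $z$, so $z\in\ol U$ for the component $U$ containing that path; then $\ol U\subseteq C$ forces $C\cap\caD_{t_2}\neq\emptyset$, ruling out the problematic case. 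With this lemma supplied, your argument is complete; note that the paper's pigeonhole formulation sidesteps this topological subtlety entirely, which is one advantage of its more pedestrian route.
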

Notice that the set $\ol{\caD}_{t}\cup\spec(\bsa)$ is closed; the connected components in Theorem \ref{thm:conn} are defined with respect to the relative topology of $\ol{\caD}_{t}\cup\spec(\bsa)$ in $\C$. Also, Theorem \ref{thm:conn} remains true even if the number of connected components is infinite at some time $t_{0}>0$. In this case, if the number becomes finite at some $t>t_{0}$, then it can never be infinite at any later time than $t$.

\begin{rem}\label{rem:R}
	The union with the spectrum $\spec(\bsa)$ in Theorem \ref{thm:conn} is included to bypass a fundamental difficulty. Namely, it is due to fact that the converse of the trivial inclusion $\supp\rho_{\bsa}\subset\spec(\bsa)$ is not true in general (see Remark \ref{rem:supp}). If $\bsa$ is such that $\supp\rho_{\bsa}=\spec(\bsa)$, then one can remove $\spec(\bsa)$ from the statement of Theorem \ref{thm:conn} simply using \eqref{eq:supp<D}. By \cite[Theorem 4.3]{Dykema-Noles-Zanin2018}, a notable class of such $\bsa$'s is \emph{decomposable} operators, which includes normal operators and finite square matrices. Most importantly $*$-limits of generic (non-Hermitian) random matrices (so-called \emph{DT-operators}) were proved to be decomposable in \cite{Dykema-Haagerup2004}.
	
	Certain $R$-diagonal operators $\bsa$ also serve as examples and counterexamples for $\supp\rho_{\bsa}=\spec(\bsa)$ -- 
	recall that an $R$-diagonal operator is the product $\bsu\bsh$ of a $*$-free pair $(\bsu,\bsh)$ with Haar unitary $\bsu$ and positive $\bsh$. 
	In \cite[Proposition 4.6]{Haagerup-Larsen2000} it is proved that $\supp\rho_{\bsu\bsh}=\spec(\bsu\bsh)$ if and only if $\norm{\bsh^{-1}}<\infty$ or $\brkt{\bsh^{-2}}=\infty$. 
	More precisely, $\supp\rho_{\bsu\bsh}$ is always the annulus with inner and outer radii $(\brkt{\bsh^{-2}}^{-1/2},\brkt{\bsh^{2}}^{1/2})$, 
	and if $\norm{\bsh^{-1}}=\infty$ and $\brkt{\bsh^{-2}}<\infty$, then $\spec(\bsu\bsh)$ is the ball with radius $\brkt{\bsh^{2}}^{1/2}$ so that $\supp\rho_{\bsu\bsh}\subsetneq\spec(\bsu\bsh)$.
\end{rem}

\subsection{Checkability of Assumption \ref{assump}}\label{sec:assump_check}
In this section, we discuss how to practically check Assumption \ref{assump} for a given $\bsa$. First of all, notice that Assumption \ref{assump} can be written solely in terms of $\{\rho_{\absv{\bsa-z}}:z\in\C\}$. Namely, it is equivalent to
\beq\label{eq:assump}
f_{\bsa}(z)=\int_{\R}\frac{1}{x^{2}}\dd\rho_{\absv{\bsa-z}}(x)>\frac{1}{t}\quad\text{whenever}\quad 0\in\supp\rho_{\absv{\bsa-z}}.
\eeq

When $\bsa$ is normal, $\rho_{\absv{\bsa-z}}$ is the push-forward of $\rho_{\bsa}$ by the map $\absv{\cdot-z}$. Thus \eqref{eq:assump} becomes a statement depending only on the measure $\rho_{\bsa}$ and hence can be validated with standard measure theoretic techniques. For example,  if $\bsa$ is normal and $\rho_{\bsa}$ has a strictly positive density in domain a with a regular (Lipschitz) boundary, then $f_{\bsa}(z)=\infty$ for all $z\in\supp\rho_{\bsa}=\spec\bsa$.
As a side note,  we remark that for any Borel measure $\rho$ on $\C$ we have (see \cite[Lemma 4.4]{Zhong2021} for a proof)
\beq\label{eq:assump_check}
\int_{\C}\frac{1}{\absv{z-w}^{2}}\dd\rho(w)=\infty\qquad \text{for $\rho$-a.e. }z\in\C.
\eeq

On the other hand when $\bsa$ is a general non-normal operator, we can not treat $(\bsa-z)^{-1}$ as an integral over the spectral resolution of $\bsa$, and the relation between $\rho_{\bsa}$ and $\rho_{\absv{\bsa-z}}$ is unclear.   Even in this case, we still have the inequality 
\beq\label{eq:Lp}
f_{\bsa}(z)=\int\frac{1}{x^{2}}\dd\rho_{\absv{\bsa-z}}(x)\geq \int_{\C}\frac{1}{\absv{w-z}^{2}}\dd\rho_{\bsa}(w)\qquad \forall z\in\C,
\eeq
from \cite[Proposition 2.14 and Theorem 2.19]{Haagerup-Schultz2007}. The inequality \eqref{eq:Lp} can be used to check $f_{\bsa}(z)>1/t$ for a given point $z$; e.g. when $\bsa$ itself is a circular element, then \eqref{eq:Lp} immediately implies $f_{\bsa}(z)=\infty$ for all $z\in\ol{\D}=\rho_{\bsa}=\spec(\bsa)$, so that Assumption \ref{assump} holds for all $t>0$.
However,  the inequality in \eqref{eq:Lp} is strict in general. An $R$-diagonal operator $\bsu\bsh$ in Remark \ref{rem:R} is such an example; we have
\beq
f_{\bsu\bsh}(0)=\brkt{\bsh^{-2}}=\frac{1}{\dist(0,\supp\rho_{\bsu\bsh})^{2}}\geq \int_{\C}\frac{1}{\absv{z}^{2}}\dd\rho_{\bsu\bsh}(z),
\eeq
and the last inequality is strict unless $\rho_{\bsh}$ is a point mass due to \cite[Proposition 4.6]{Haagerup-Larsen2000}. Also recall from Remark \ref{rem:R} that $\supp\rho_{\bsu\bsh}\subsetneq\spec(\bsu\bsh)$ when $\brkt{\bsh^{-2}}<\infty$ and $\norm{\bsh^{-1}}=\infty$.


For certain operators $\bsa$, there is an alternative approach to the $z$-dependent measure $\rho_{\absv{\bsa-z}}$. More precisely, this is the case when the Hermitized Green function $\bsM_{\bsa}$ (see \eqref{eq:M_def} later for its definition) solves a reasonable Schwinger-Dyson equation. Typical examples include $R$-diagonal operators, for which we have the identity (see \cite[Proposition 3.1]{Haagerup-Kemp-Speicher2010} or \cite[Proposition 3.5]{Haagerup-Larsen2000})
\beq\label{eq:R_L2}
\wt{\rho}_{\absv{\bsu\bsh-z}}=\wt{\rho}_{\bsh}\boxplus\wt{\delta}_{\absv{z}}
\eeq
where $\wt{\mu}$ denotes the symmetrization of a measure $\mu$ on $\R_{+}$ to $\R$. 

Thus, via the identity \eqref{eq:R_L2}, we can check \eqref{eq:assump} in terms of $\wt{\rho}_{\bsh}$; for example when $\bsa$ is a circular element it is known that $\wt{\rho}_{\bsh}$ is the semi-circle distribution, for which we can easily check that the following three conditions on $z$ are all equivalent:
\begin{equation}
	0\in\supp \wt{\rho}_{\bsh}\boxplus\wt{\delta}_{\absv{z}},\qquad \absv{z}\leq 1,\qquad  \int_{\R}\frac{1}{x^{2}}\dd(\wt{\rho}_{\bsh}\boxplus\wt{\delta}_{\absv{z}})(x) =f_{\bsa}(z)=\infty.
\end{equation}
Another example is when $\bsa=(\sqrt{s_{ij}}\bsx_{ij})_{ij}\in(\caM)^{n\times n}$ is a matrix-valued circular element, where $n$ is fixed, $\bsx_{ij}$'s are $*$-free circular elements, and $s_{ij}>0$. In this case, $M_{\bsa}$ solves a Schwinger-Dyson equation similar to that of a usual circular element, and by studying the equation it was proved in \cite[Proposition 3.2]{Alt-Erdos-Kruger2018} that the following three conditions on $z$ are equivalent:
\begin{equation}
	0\in\supp\rho_{\absv{\bsa-z}},\qquad\absv{z}\leq \rho(S),\qquad
	\brkt{\absv{\bsa-z}^{-2}}=\infty,
\end{equation}
where $\rho(S)$ is the spectral radius of the entrywise positive matrix $S=(s_{ij})\in\C^{n\times n}$. In particular, for this choice of $\bsa$ Assumption \ref{assump} holds for all $t>0$.

\section{Examples}\label{sec:ex}

In this section, we present five examples of $\rho_{\bsa+\sqrt{t}\bsx}$ for various $\bsa$'s, along with numerical simulations. In Examples \ref{ex:A4+X} -- \ref{ex:tang} we consider $\bsa$ satisfying Assumption \ref{assump}, hence they serve as illustrations of Theorem~\ref{thm:sharp}. Example \ref{ex:higher} provides an illustration of Theorem \ref{thm:higher}, for which the density of $\rho_{\bsa+\sqrt{t}\bsx}$ has faster than quadratic decay. Finally in Example \ref{ex:Jacobi}, we consider $\bsa$'s violating Assumption~\ref{assump}, for which the density of $\rho_{\bsa+\sqrt{t}\bsx}$ still has sharp decay at the edge but the shape of $\supp\rho_{\bsa+\sqrt{t}\bsx}$ is different from those described in Theorem \ref{thm:sharp}.

In order to visualize the density of $\rho_{\bsa+\sqrt{t}\bsx}$, we use the eigenvalues of the corresponding random matrix model. More precisely, by \cite[Theorem 6]{Sniady2002}, if the $*$-distribution of an $(N\times N)$ matrix $A$ converges to that of an operator $\bsa$ as $N\to\infty$ and $X$ is an $(N\times N)$ complex Ginibre matrix, then the eigenvalue distribution of $A+\sqrt{t}X$ converges to the Brown measure $\rho_{\bsa+\sqrt{t}\bsx}$. In this regard, all of Figures \ref{fig:4}--\ref{fig:Jacobi} contain sampled eigenvalues of an $(N\times N)$ matrix $A+\sqrt{t}X$ with $N\sim 10^{3}$, where $X$ is a complex Ginibre matrix and the $*$-distribution of $A$ is close to that of $\bsa$ (which varies by examples).

\begin{ex}\label{ex:A4+X}
	Let $\bsa$ be a normal operator with spectral measure $(\delta_{1}+\delta_{-1}+\delta_{\ii}+\delta_{-\ii})/4$. Then the following can be proved for the Brown measure $\rho_{\bsa+\sqrt{t}\bsx}$ with direct, elementary calculations using Theorems \ref{thm:prev} and \ref{thm:sharp} (see Figure~\ref{fig:4} for an illustration):
	\begin{itemize}
		\item[(a)] For $t<2(\sqrt{2}-1)$, the support $\ol{\caD}_{t}$ consists of four simply connected regions, and $\caC_{t}=\emptyset$ so that the density $\rho$ of $\rho_{\bsa+\sqrt{t}\bsx}$ is uniformly bounded from below on $\caD_{t}$ by Theorem \ref{thm:sharp} (i) and (ii).
		
		\item[(b)] At $t=2(\sqrt{2}-1)$, the support $\ol{\caD}_{t}$ becomes connected, but there are four critical points in $\caC_{t}$ and we have $\det H[f_{\bsa}](z)<0$ for each $z\in\caC_{t}$. Thus, around each $z\in\caC_{t}$, the density has quadratic decay as described in \eqref{eq:edge_quad_1} with $\caS_{z,\kappa}=\C$, and has a (locally) hourglass-shaped support. The density is bounded from below away from $\caC_{t}$.
		
		\item[(c)] For $2(\sqrt{2}-1)<t<1$, the support $\ol{\caD}_{t}$ is homeomorphic to an annulus, and $\caC_{t}=\emptyset$ so that the density is bounded from below on $\caD_{t}$. 
		
		\item[(d)] At $t=1$, the support $\ol{\caD_{t}}$ becomes simply connected. In this case $\caC_{t}=\{0\}$ and $\det H[f_{\bsa}](0)>0$, so that $\caS_{0,\kappa}=\C$ and the density has quadratic decay around the origin in every direction.
		
		\item[(e)] For $t>1$, the density is strictly positive and supported in a single simply connected region.
	\end{itemize}
\end{ex}
\begin{figure}
	\centering
	\subcaptionbox{t=0.5}{\includegraphics[width=0.3\textwidth]{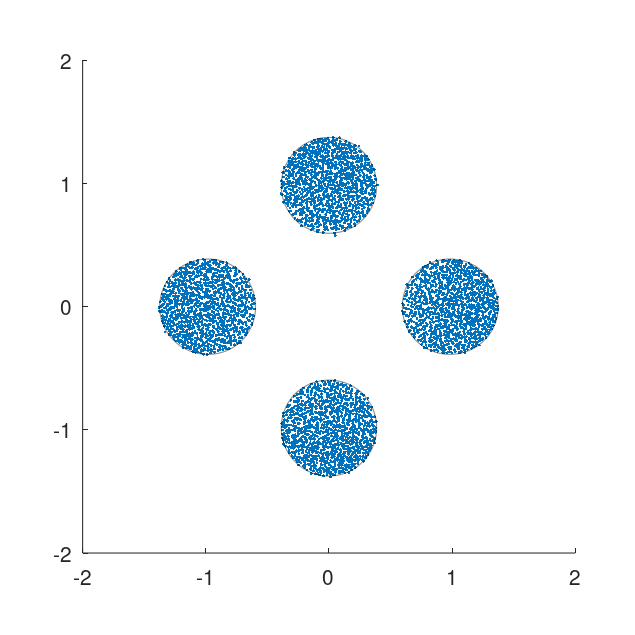}}
	\subcaptionbox{t=$2(\sqrt{2}-1)$\label{fig:42}}{\includegraphics[width=0.3\textwidth]{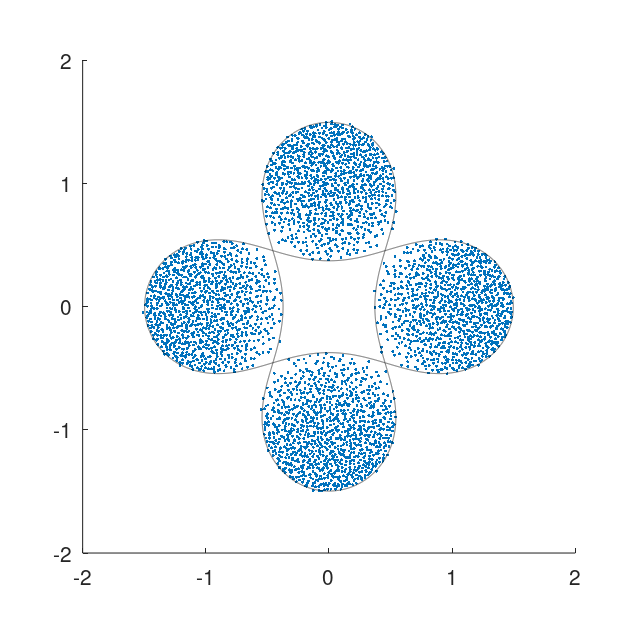}}
	\subcaptionbox{t=0.9}{\includegraphics[width=0.3\textwidth]{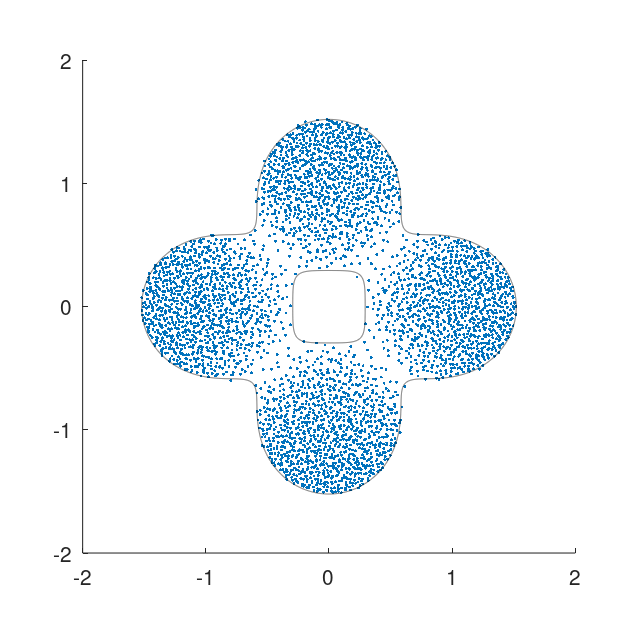}}
	\subcaptionbox{t=1\label{fig:44}}{\includegraphics[width=0.3\textwidth]{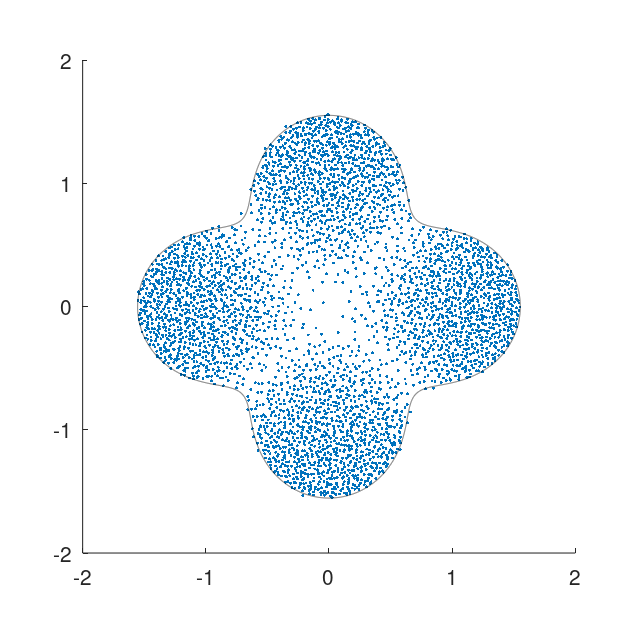}}
	\subcaptionbox{t=1.1}{\includegraphics[width=0.3\textwidth]{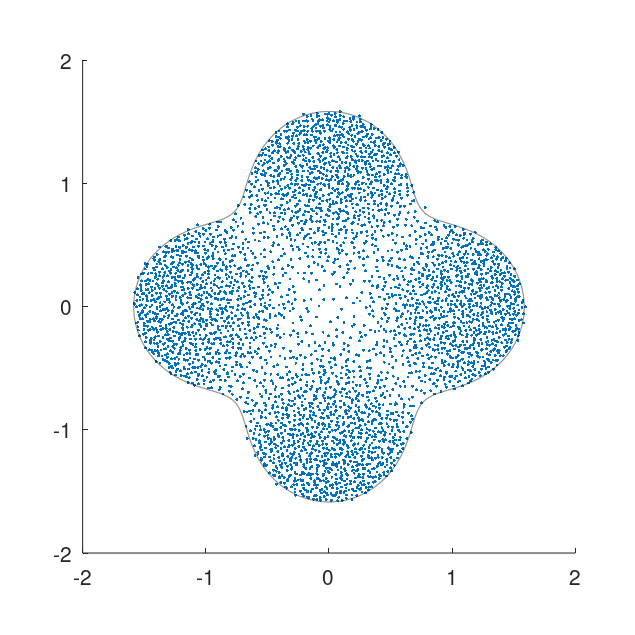}}
	\caption{Sampled eigenvalues of $A+\sqrt{t}X$ (blue dots) and the boundary of $\supp\rho_{\bsa+\sqrt{t}\bsx}$ (black curve), where $A$ is a diagonal matrix with entries $\pm1$ or $\pm\ii$ with the same multiplicity}\label{fig:4}
\end{figure}

\begin{ex}\label{ex:U+X}
	A similar phenomenon as in Example \ref{ex:A4+X} remains true when $\bsa$ is normal with spectral measure uniformly distributed on $k$-th roots of unity, as long as $k>2$. This even carries over along the limit $k\to\infty$, leading to $\bsu+\sqrt{t}\bsx$ with Haar unitary $\bsu$. We can easily see that the measure $\rho_{\bsu+\sqrt{t}\bsx}$ is supported on an annulus or a disk (depending on $t$) and radially symmetric. The set $\caC_{t}$ is empty unless $t=1$, in which case $\caC_{t}=\{0\}$ and the density has quadratic decay at the origin. See Figure \ref{fig:u} for an illustration; we sampled the eigenvalues of $J+X$, where $J=(J_{ij})_{1\leq i,j\leq N}$ is the $N\times N$ null Jordan block, i.e. $J_{ij}=\lone(j-i=1)$. Since the $*$-distribution of $J$ converges to that of $\bsu$ as $N\to\infty$, we can see that the eigenvalue distribution of $J+X$ converges to the Brown measure $\rho_{\bsu+\bsx}$. In fact, $J$ is a rank-one perturbation of $J'_{ij}\deq \lone(j-i\equiv 1 \mod N)$ which is a unitary matrix with uniform spectral measure on the set of $N$-th roots of unity.
\end{ex}

\begin{figure}
	\centering
	\subcaptionbox{$\spec(J+X)$ and $\supp\rho_{\bsu+\bsx}$\label{fig:u}}{\includegraphics[width=0.3\textwidth]{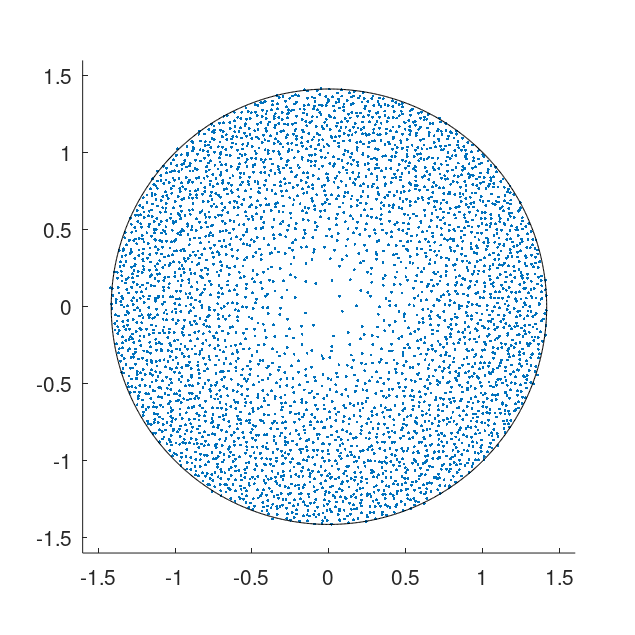}}
	\hspace{4em}
	\subcaptionbox{$\caS(0,\kappa)$, $\spec(B+\sqrt{t_{0}}X)$, and $\supp\rho_{\bsb+\sqrt{t_{0}}\bsx}$ \label{fig:tang}}{\includegraphics[width=0.3\textwidth]{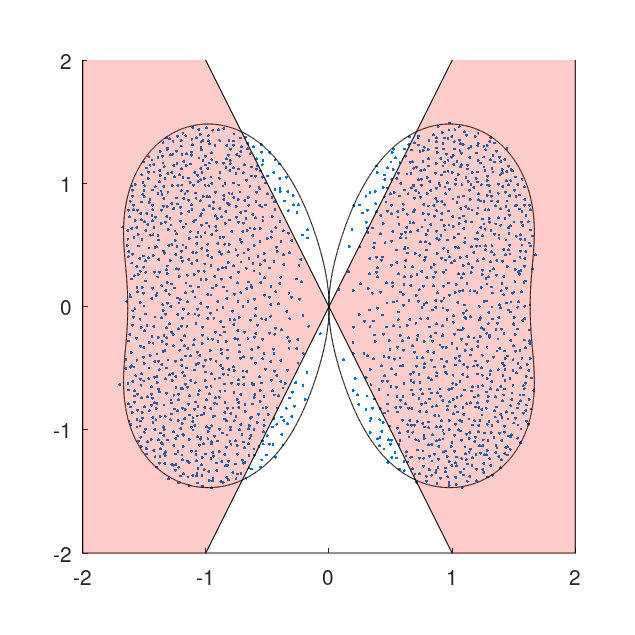}}
	\caption{$\supp\rho_{\bsa+\sqrt{t}\bsx}$ (black) with different choices of $\bsa$ and eigenvalues (blue) of the corresponding matrix models, see Examples \ref{ex:U+X}--\ref{ex:tang} for details  \label{fig:abc}}
	
\end{figure}

\begin{ex}\label{ex:tang}
	In Examples \ref{ex:A4+X} and \ref{ex:U+X}, every $z\in\caC_{t}$ was a non-degenerate critical point, i.e. $\det H[f_{\bsa}](z)\neq 0$. This implies $P_{z}=0$ and thus $\caS_{z,\kappa}=\C$ ($P_{z}$ and $\caS_{z,\kappa}$ are defined in Theorem \ref{thm:sharp} (iv)). In this example, we construct an operator $\bsb$ so that $\det H[f_{\bsb}](z)=0$ for some $z\in\caC_{t}$. Consider a normal $\bsb$ whose spectral measure is given by
	\beqs
	\int_{\C}\varphi(z)\dd\rho_{\bsb}(z)= \frac{35}{96}\int_{-1}^{1}\frac{\varphi(-1+\ii y)+\varphi(1+\ii y)}{2}(1+y^{2})^{3}\dd y,\qquad \varphi\in C_{0}(\C).
	\eeqs
	More concretely, we may take $\bsb$ to be the complex random variable $x+\ii y$ where $x$ and $y$ are independent, $x$ is $\pm1$ Bernoulli distributed, and $y$ is a continuous random variable with distribution function given by
	\beq
	\P[y\leq t]=\frac{35}{96}\int_{-1}^{t}(1+s^{2})^{3}\dd s,\qquad t\in[-1,1].
	\eeq
	After some direct computations, we find that
	\beq
	\nabla f_{\bsb}(0)=0,\qquad \frac{\partial^{2}}{\partial x^{2}}f_{\bsb}(0)>0,\qquad \frac{\partial^{2}}{\partial x\partial y}f_{\bsb}(0)=0,\qquad \frac{\partial^{2}}{\partial y^{2}}f_{\bsb}(0)=0,
	\eeq
	so that $\det H[f_{\bsb}](0)=0$ and $P_{z}$ is the projection onto the $y$-coordinate. Taking $t_{0}=1/f_{\bsb}(0)$, it follows that the origin is a critical boundary point for $\rho_{\bsb+\sqrt{t_{0}}\bsx}$. As $t$ increases to $t_{0}$, the two disjoint regions of $\ol{\caD}_{t}$ become tangent to each other at the origin. Even in this case, by Theorem \ref{thm:sharp} (iv), the density around the origin is proportional to $\absv{z}^{2}$ in any angular sector $\caS(0,\kappa)$ contained in the open set $\{z:\absv{\re z}>0\}$. Figure \ref{fig:tang} shows the eigenvalues of $B+\sqrt{t_{0}}X$ (blue), $\supp\rho_{\bsb+\sqrt{t_{0}}\bsx}$ (black), and $\caS(0,\kappa)$ (red), where $N=10^{3}$ and $B=\diag(b_{i})$ is the $(2N\times 2N)$ diagonal matrix given by
	\beqs
	b_{i}=\begin{cases}
		-1+\ii \wt{y}_{i}, & 1\leq i\leq N \\
		1+\ii\wt{y}_{i-N}, & N+1\leq i\leq 2N,
	\end{cases}
	\qquad
	\frac{35}{96}\int_{-1}^{\wt{y}_{i}}(1+s^{2})^{3}\dd s=\frac{i}{N}. 
	\eeqs
	 Note that in Figure \ref{fig:tang} we see two curves since $\partial_{y}^{3}f_{\bsa}(0)=0$ and $\partial_{y}^{4}f_{\bsa}(0)<0$; if $\partial_{y}^{3}f_{\bsa}(0)$ did not vanish, we would see one curve with a cusp singularity.
\end{ex}

\begin{ex}\label{ex:higher}
	In this example we show a numerical illustration for Theorem \ref{thm:higher}. Consider the random diagonal (hence normal) matrix $C$ of size $10^{3}$, whose entries are i.i.d. samples from the law $\rho_{\bsa}$ in \eqref{eq:two_side_Jacobi} with $p=1$ and $q=1.5$. In this case, the critical value $t_{0}=f_{\bsa}(0)^{-1}$ in Theorem \ref{thm:higher} is $t_{0}\approx 0.32$. In Figure \ref{fig:higher} we plot the eigenvalues of $10$ samples of $C+\sqrt{t}X$ for two values of $t$. When $t<t_{0}$, we see from Figure \ref{fig:higher1} that the density of $\rho_{\bsa+\sqrt{t}\bsx}$ decays around $0$. In contrast when $t>t_{0}$, Figure \ref{fig:higher2} shows that we recover the sharp cutoff edge behavior as in Theorem \ref{thm:sharp} (iii).
\end{ex}

\begin{figure}
	\centering
	\subcaptionbox{$t=0.2<t_{0}$\label{fig:higher1}}{\includegraphics[width=0.3\textwidth]{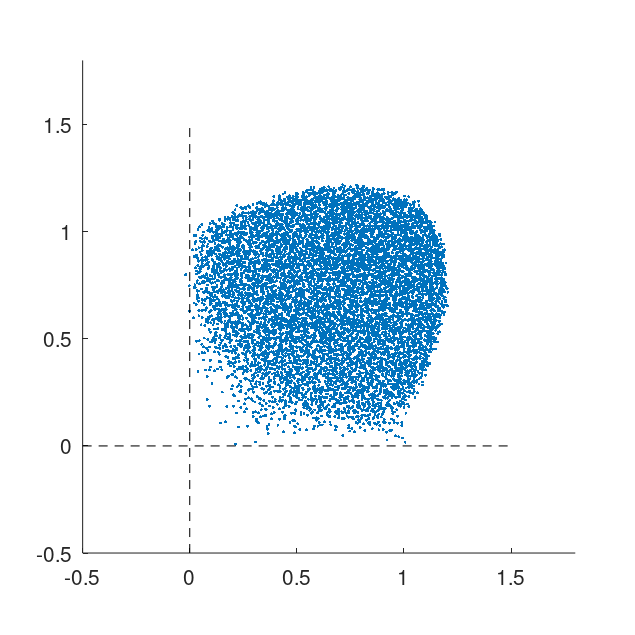}}
	\hspace{4em}
	\subcaptionbox{$t=0.9>t_{0}$\label{fig:higher2}}{\includegraphics[width=0.3\textwidth]{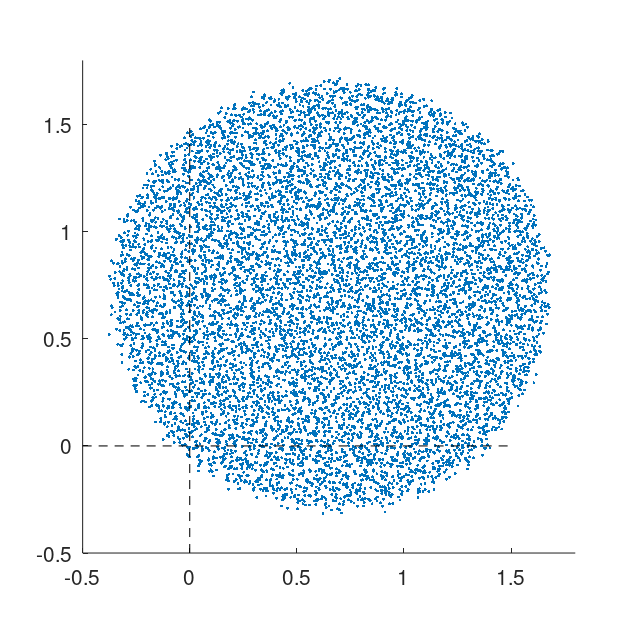}}
	\caption{$\spec(C+\sqrt{t}X)$ (blue) for two values of $t$, below and above the critical value, with $xy$-axes (dashed) for reference; see Example \ref{ex:higher} for details \label{fig:higher}}
\end{figure}
As a last example, we consider the Hermitian $\bsd$ whose spectral measure is of Jacobi-type, i.e. has a density (also denoted by $\rho_{\bsd}$) satisfying for some $p_{\pm}>-1$
\beq\label{eq:Jacobi}
\rho_{\bsd}(x)\sim x^{p_{-}}(1-x)^{p_{+}}\lone_{[0,1]}(x).
\eeq
In other words, $\rho_{\bsd}$ has power-law decays around lower and upper edges, $0$ and $1$, with exponents $p_{-}$ and $p_{+}$, respectively. We first recall the following result for the Brown measure of $\bsd+\sqrt{t}\bsx$ with general Hermitian $\bsd$ from \cite[Theorem 1.1]{Ho-Zhong2023}.
\begin{itemize}
	\item[(i)] There is a continuous function $v_{t}:\R\to[0,\infty)$ such that $\caD_{t}=\{x+\ii y\in\C:\absv{y}<v_{t}(x)\}$. More precisely, $v_{t}(x)=v_{t}(x,0)$ is defined in \eqref{eq:def_v}.
	\item[(ii)] The density of $\rho_{\bsd+\sqrt{t}\bsx}$ is constant along vertical direction.
\end{itemize}
If we specialize $\bsd$ to have Jacobi-type spectral measure, we have the following result; see Appendix \ref{append:Jacobi} for a proof.
\begin{prop}\label{prop:Jacobi}
	Let $\bsd\in\caM$ be $*$-free of a circular element $\bsx$ such that $\rho_{\bsd}$ satisfies \eqref{eq:Jacobi}. Define four real numbers.
	\beq\begin{aligned}
		t_{-}\deq&\frac{1}{f_{\bsd}(0)},&\qquad 
		x_{t,-}\deq&\sup\left\{x<0:f_{\bsd}(x)\leq \frac{1}{t}\right\}, 	\\
		t_{+}\deq&\frac{1}{f_{\bsd}(1)},&\qquad 
		x_{t,+}\deq&\inf\left\{x>1:f_{\bsd}(x)\leq \frac{1}{t}\right\},
	\end{aligned}\eeq
	where $t_{\pm}$ can be zero. Then the following hold true.
	\begin{itemize}
		\item[(iii)] $v_{t}\vert_{\R}$ is strictly positive on $(x_{t,-},x_{t,+})$ and identically zero elsewhere.
		\item[(iv)] If $t\in(0,t_{-})$, then $x_{t,-}=0$ and $v_{t}(x)\sim x^{p_{-}}\lone(x>0)$ as $x\to0.$ Note that $t_{-}>0$ only if $p_{-}>1$.
		\item[(v)] If $t\in(t_{-},\infty)$, then $x_{t,-}<0$ and $v_{t}(x)\sim \sqrt{(x-x_{t,-})_{+}}$ as $x\to x_{t,-}$.
		\item[(vi)] If $t\neq t_{\pm}$, the density of $\rho_{\bsd+\sqrt{t}\bsx}$ is bounded from below by a positive constant in $\caD_{t}$.
	\end{itemize}
	The same statements as in (iv) and (v) hold true at the upper edge $x_{t,+}$ after obvious modifications.
\end{prop}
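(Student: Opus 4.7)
The key is the integral representation afforded by Hermiticity of $\bsd$:
\beqs
F(x,y) \deq f_{\bsd}(x+\ii y) = \int_{\R} \frac{\rho_{\bsd}(\lambda)}{(\lambda-x)^{2}+y^{2}}\,\dd\lambda,
\eeqs
which is manifestly even and strictly decreasing in $|y|$; consequently, $\caD_{t} = \{x+\ii y : |y| < v_{t}(x)\}$ with $v_{t}(x) > 0$ iff $f_{\bsd}(x) > 1/t$. Restricted to $\R$, since $\rho_{\bsd}$ has a positive density on $(0,1)$, $f_{\bsd}$ equals $+\infty$ on $(0,1)$; it is real analytic on $\R\setminus[0,1]$ and, differentiating under the integral, strictly increases from $0$ to $1/t_{-}$ on $(-\infty, 0)$ and strictly decreases from $1/t_{+}$ to $0$ on $(1,+\infty)$. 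Statement (iii) follows immediately from these monotonicity properties combined with the definitions of $x_{t,\pm}$, and one reads off that $x_{t,-}<0$ iff $t>t_{-}$ and $x_{t,-}=0$ iff $t\leq t_{-}$ (symmetrically at the upper edge).

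For (v), take $t > t_{-}$, so $x_{*} \deq x_{t,-} < 0$ lies outside $\spec(\bsd) = [0,1]$. Then $F$ is jointly real analytic in $(x, \tau)$ with $\tau = y^{2}$ near $(x_{*}, 0)$, with $F(x_*, 0) = 1/t$, $\partial_{x}F(x_{*}, 0) = f_{\bsd}'(x_*) > 0$ (by strict monotonicity above), and $\partial_{\tau}F(x_{*}, 0) = -\int \rho_{\bsd}(\lambda)(\lambda - x_*)^{-4}\,\dd\lambda < 0$. The analytic implicit function theorem applied to $F(x, \tau) = 1/t$ then delivers
\beqs
v_{t}(x)^{2} = \frac{\partial_{x} F(x_*, 0)}{|\partial_{\tau} F(x_*, 0)|}(x - x_*) + O((x-x_*)^{2}),
\eeqs
yielding the square-root asymptotics; the upper edge is identical.

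For (iv), assume $0 < t < t_{-}$, which forces $p_{-} > 1$. I would split $F(x, y) = F_{1}(x, y) + F_{2}(x, y)$ with $F_{1}$ the integral over $[0, 2x]$ and $F_{2}$ over $[2x, 1]$. Dominated convergence (using $p_{-} > 1$) gives $F_{2}(x, 0) \to f_{\bsd}(0) = 1/t_{-}$ as $x \to 0^{+}$, and a Taylor estimate of the integrand yields $F_{2}(x, y) = 1/t_{-} + O(x^{p_{-}-1})$ uniformly for $y \leq x$. For $F_{1}$, rescale $\lambda = xu$ and $y = xs$ to obtain
\beqs
F_{1}(x, xs) = cx^{p_{-}-1}\int_{0}^{2} \frac{u^{p_{-}}(1-xu)^{p_{+}}}{(u-1)^{2}+s^{2}}\,\dd u,
\eeqs
and observe that as $s \to 0$ the integrand concentrates near $u=1$, giving $\int_{0}^{2} u^{p_{-}}/[(u-1)^{2}+s^{2}]\,\dd u = \pi/s + O(1)$. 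Imposing $F(x, v_{t}(x)) = 1/t$ then forces the singular contribution to balance the positive gap $1/t - 1/t_{-}$; solving $c\pi x^{p_{-}-1}/s \sim 1/t - 1/t_{-}$ yields $s \sim x^{p_{-}-1}$ (which tends to $0$ since $p_{-}>1$) and hence $v_{t}(x) = sx \sim x^{p_{-}}$. Matching two-sided bounds will follow from strict monotonicity of $F(x, \cdot)$ in $s$. The main technical obstacle is here: one must keep the subleading errors under uniform control across the joint limit $x, s \to 0$, so that the $O(x^{p_{-}-1})$ correction in $F_{2} - 1/t_{-}$ contributes only a multiplicative constant and does not perturb the scaling exponent $p_{-}$.

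For (vi), the density $\rho(x+\ii y) = \rho(x)$ is independent of $y$ (statement (ii) quoted above from Ho-Zhong), positive and real analytic on $(x_{t,-}, x_{t,+})$ by Theorem \ref{thm:prev}(iii), so the only potential issue is at the boundary points $x_{t,\pm}$. For $t > t_{-}$, $x_{t,-} < 0 \notin \spec(\bsd)$ with $\partial_{x}f_{\bsd}(x_{t,-}) > 0$, so the local version of Theorem \ref{thm:sharp}(iii) in Remark \ref{rem:sharp_local} yields a positive limit $\rho(x_{t,-}^{+}) > 0$. For $t < t_{-}$, $x_{t,-} = 0 \in \spec(\bsd)$ and Remark \ref{rem:sharp_local} does not apply; in this case I would invoke the explicit Brown measure density formula from \cite{Ho-Zhong2023} together with the Dyson equation at $z = x + \ii v_{t}(x)$ to compute $\rho(0^{+})$ in closed form and verify positivity precisely under the strict inequality $t < t_{-}$. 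The upper edge is handled symmetrically.
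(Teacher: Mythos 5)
Your overall structure matches the paper's, and parts (iii) and (v) are handled cleanly and correctly. In (v) you apply the implicit function theorem directly to $F(x,\tau)=1/t$ with $\tau=v^2$; the paper instead invokes Remark~\ref{rem:local_v} (the local version of Lemma~\ref{lem:v_asymp}), but these give the same conclusion and your route is arguably more self-contained for the Hermitian Jacobi case.

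For (iv) you use a genuinely different decomposition: a two-piece split $[0,2x]\cup[2x,1]$ combined with the rescaling $\lambda=xu$, $y=xs$, rather than the paper's two-step argument (first an a priori bound $v<x/2$ via a contradiction, then a four-interval split $[0,x-v]\cup[x-v,x+v]\cup[x+v,\epsilon^{-1}x]\cup[\epsilon^{-1}x,1]$). Your approach works and is transparent about where the singular $\pi/s$ contribution comes from, but you should be aware that the paper's $[x-v,x+v]$ piece plays exactly the role of your $u\approx 1$ region, and the a priori bound $v\lesssim x$ that you implicitly need (so that $s\le 1$ in your rescaling) has to be established first; this is a one-line consequence of monotonicity of $F(x,\cdot)$ (as you note) and $F(x,x)\to 1/t_-<1/t$, but it should be stated. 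Also, the claimed error $F_2(x,y)=1/t_-+O(x^{p_--1})$ is only correct for $p_-\le 2$; for $p_->2$ the Taylor error $\int_{2x}^1 x\lambda^{-3}\rho_{\bsd}(\lambda)\,\dd\lambda$ is of order $x$, not $x^{p_--1}$. This does not damage the conclusion --- one only needs the error to be $o(1)$ --- but the stated exponent is off.

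The real gap is (vi) for $t<t_-$, where the cusp at $x_{t,-}=0$ lies inside $\spec(\bsd)$ and Remark~\ref{rem:sharp_local} does not apply. You say you ``would invoke the explicit Brown measure density formula \dots to compute $\rho(0^+)$ in closed form and verify positivity,'' but this is a statement of intent, not an argument, and the computation is not routine: as $v\to 0$ the formula \eqref{eq:dens_bulk} has an apparent $v^2$ prefactor, so a priori the density could vanish at the cusp. The paper's proof uses the already-established asymptotic $v\sim x^{p_-}$ and localizes the first term of \eqref{eq:dens_bulk} to the $v$-scale window $|E-x|\lesssim v$, obtaining
\beqs
\rho(x)\gtrsim \int_{x-x^{p_-}}^{x+x^{p_-}}\frac{x^{2p_-}}{|E-x|^4+x^{4p_-}}\,E^{p_-}\,\dd E\sim 1,
\eeqs
uniformly as $x\to 0^+$. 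Without this localization step, ``computing $\rho(0^+)$'' does not by itself rule out that the density vanishes at the cusp, which is precisely what (vi) asserts does not happen.
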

For $p_{-}>1$ and $t<t_{-}$, Assumption \ref{assump} fails at the lower edge $x_{t,-}=0$ and $\caD_{t}$ has a cusp of order $p_{-}$ at $x_{t,-}$. Note that under Assumption \ref{assump} such a sharp cusp would not happen. To see this, note that $\partial\caD_{t}$ is a level set of the smooth function $f_{\bsd}$ with $\Delta f_{\bsd}>0$, hence the open set $\caD_{t}$ contains a non-trivial angular sector based at each $z\in\partial\caD_{t}$. Still, the density is comparable to $\lone_{\ol{\caD}_{t}}$ in all cases; we expect Proposition \ref{prop:Jacobi} (vi) to be true even for $t=t_{\pm}$, but we excluded these cases for technical reasons.

%

\begin{ex}\label{ex:Jacobi}
	Finally, we present an illustration for Proposition \ref{prop:Jacobi}. Here we consider the random diagonal matrix $D$ whose entries are sampled from $\mathrm{Beta}(3,4)$, i.e. with density proportional to \eqref{eq:Jacobi} with $p_{-}=2$ and $p_{+}=3$. In this case, the limiting eigenvalue distribution of $D+\sqrt{t}X$ is $\rho_{\bsd+\sqrt{t}\bsx}$ where $\bsd$ is Hermitian with $\bsd\sim\mathrm{Beta}(3,4)$, and trivial computations show that the two critical values for $t$ in Proposition \ref{prop:Jacobi} are $t_{-}=1/15$ and $t_{+}=1/5$. In Figure \ref{fig:Jacobi}, we plot $\supp\rho_{\bsd+\sqrt{t}\bsx}$ and eigenvalues of $D+\sqrt{t}X$ for three values of $t$ divided by $t_{-}$ and $t_{+}$. For $t<t_{-}$ (Figure~\ref{fig:Jacobi1}), the support $\ol{\caD_{t}}$ has cusps at both left and right edges, respectively with orders $p_{-}=2$ and $p_{+}=3$. When $t\in(t_{-},t_{+})$, (Figure \ref{fig:Jacobi2}) only the right edge remains as a cusp and the left edge becomes smooth. Finally for $t>t_{+}$ (Figure \ref{fig:Jacobi3}) the boundary $\partial\caD_{t}$ becomes smooth. In all three cases the density remains bounded from below up to the boundary, in contrast to Figure \ref{fig:higher1} where it vanishes at $0$.
\end{ex}

\begin{figure}
	\centering
	\subcaptionbox{$t=1/30<t_{-}$\label{fig:Jacobi1}}{\includegraphics[width=0.3\textwidth]{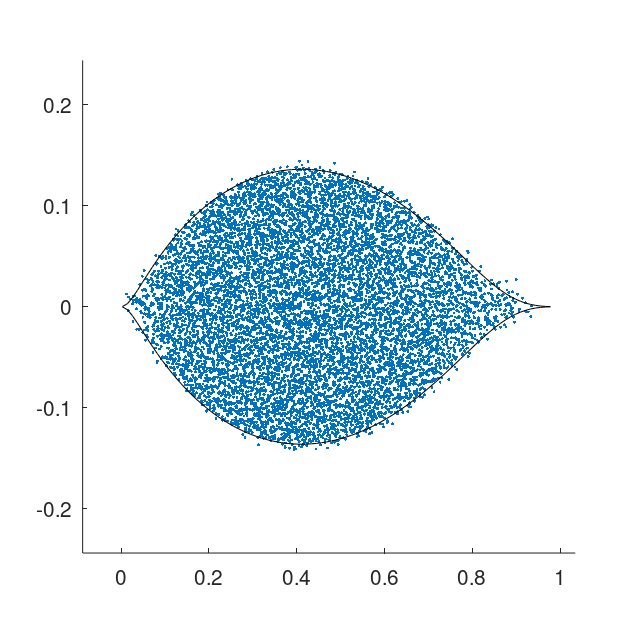}}
	\subcaptionbox{$t_{-}<t=1/10<t_{+}$\label{fig:Jacobi2}}{\includegraphics[width=0.3\textwidth]{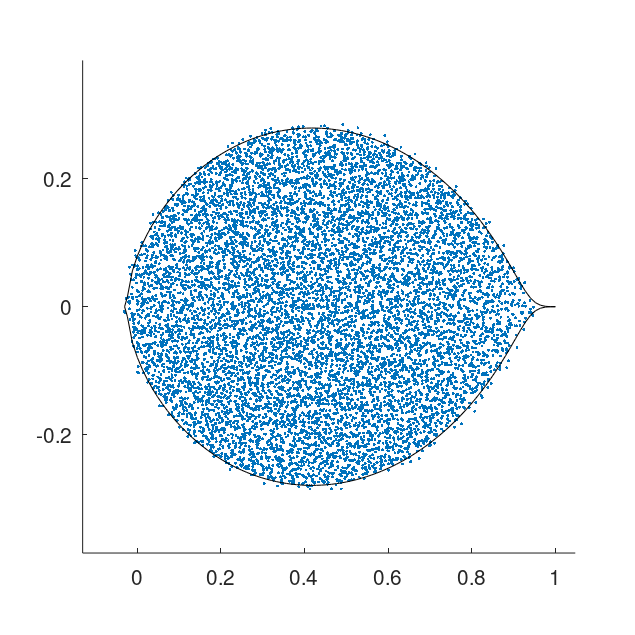}}
	\subcaptionbox{$t=2/5>t_{+}$\label{fig:Jacobi3}}{\includegraphics[width=0.3\textwidth]{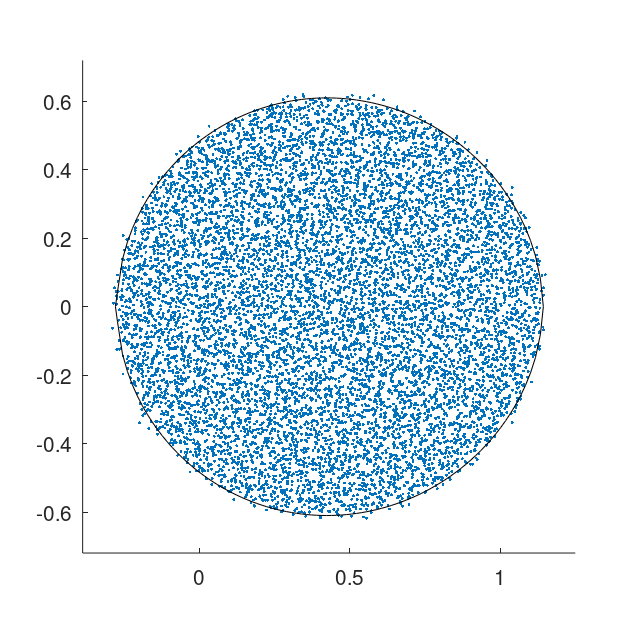}}
	\caption{$\spec(D+\sqrt{t}X)$ (blue) and $\supp\rho_{\bsd+\sqrt{t}\bsx}$ (black) for three values of $t$ (with $y$-axis scaled differently for clarity); see Example \ref{ex:Jacobi} for details\label{fig:Jacobi}.}
\end{figure}

\section{Preliminaries}\label{sec:prelim}
 In this section we collect various background information on $\bsa+\sqrt{t}\bsx$  that will be used later. They are largely taken from a series of papers \cite{Belinschi-Yin-Zhong2022,Bercovici-Zhong2022,Zhong2021} and we include them here
in order to put our proof in a proper context. We stress that all content in this section applies	to general $\bsa$ without Assumption 7. The new results, including the proof of Theorem~\ref{thm:sharp}, appear in Section 5. 

For simplicity, we use the shorthand notation
\beq\label{eq:def_at}
\bsa_{t}\deq \bsa+\sqrt{t}\bsx,\qquad t\geq 0,
\eeq 
in the rest of the paper, where the operators $\bsa,\bsx\in\caM$ are defined in Definition \ref{defn:model}.

\subsection{Hermitization}
As the definition \eqref{eq:Brown} of Brown measure suggests, we are naturally interested in the spectral measure of $\absv{\bsa_{t}-z}$ for varying $z\in\C$. We can further linearize the product $\absv{\bsa_{t}-z}^{2}=(\bsa_{t}-z)(\bsa_{t}-z)\adj$, that is, using the fact that $\absv{\bsa_{t}-z}$ has the same spectrum as
\beq\label{eq:linearize}
\begin{pmatrix}0 &\bsa_{t}-z\\(\bsa_{t}-z)\adj & 0 \end{pmatrix}\in M_{2}(\caM)=M_{2}(\C)\otimes\caM,
\eeq 
modulo symmetry about zero. The main advantage of studying \eqref{eq:linearize} instead of $\absv{\bsa_{t}-z}$ is that the linearized matrix is a plain sum of linearized versions of $\sqrt{t}\bsx$ and $\bsa-z$, which allows us to exploit their freeness more easily. The matrix in \eqref{eq:linearize} is often called the Hermitization of $\bsa_{t}$, a concept first introduced in \cite{Girko1984} along the proof of circular law. Hermitization method also has been proved to be an effective tool in free probability; we refer to \cite{Belinschi-Sniday-Speicher2018} for its application to  the study of Brown measures of non-normal operators. 

The goal of this section is to introduce notations involving Hermitization and its Green function. Define for each $\bsb\in\caM$
\beq\label{eq:Hermit_def}
\bsH_{\bsb}\deq \begin{pmatrix} 0 & \bsb \\ \bsb\adj & 0\end{pmatrix}\in M_{2}(\caM)\equiv M_{2}(\C)\otimes \caM.
\eeq
Obviously $H_{\bsb}$ is a bounded Hermitian operator, so that we may define its generalized resolvent as follows;
\beq\label{eq:UHP}
\bsG_{\bsb}(Z)\deq(\bsH_{\bsb}-Z)^{-1},\qquad Z\in\bbH_{+}(M_{2}(\C)),
\eeq
where we canonically embedded $\bbH_{+}(M_{2}(\C))$ in $M_{2}(\caM)$. As immediate consequences of the definition of $\bsG_{\bsb}(Z)$, we have 
\beq\label{eq:G}\begin{aligned}
	\im \bsG_{\bsb}(Z)&=\frac{1}{\bsH_{\bsb}-Z\adj}\im Z\frac{1}{\bsH_{\bsb}-Z}>0, \\
	\norm{\bsG_{\bsb}(Z)}&=\left(\inf_{\norm{\bsv}=1}\absv{\bsv\adj(\bsH_{\bsb}-Z)\bsv}\right)^{-1}\leq \norm{(\im Z)^{-1}},
\end{aligned}\eeq
where the infimum is taken over vectors $\bsv$ in the GNS Hilbert space $L^{2}(\caM,\brkt{\cdot})$ on which $M_{2}(\caM)$ acts.
With a slight abuse of notation we use $\brkt{\cdot}$ to denote the (block-wise) partial trace on $M_{2}(\caM)$, that is,
\beq
\Brkt{\begin{pmatrix} \bsb_{11} & \bsb_{12} \\ \bsb_{21} & \bsb_{22}\end{pmatrix}}\deq \begin{pmatrix}\brkt{\bsb_{11}} & \brkt{\bsb_{12}} \\ \brkt{\bsb_{21}} & \brkt{\bsb_{22}}\end{pmatrix}\in M_{2}(\C).
\eeq
We then use the partial trace to define the $M_{2}(\C)$-valued Stieltjes transform as
\beq\label{eq:M_def}
\bsM_{\bsb}(Z)\deq \brkt{\bsG_{\bsb}(Z)},\qquad Z\in\bbH_{+}(M_{2}(\C)).
\eeq 
Note that the same trivial estimates as in \eqref{eq:G} hold for $\bsM$ and its normalized trace $\tr \bsM\deq \Tr \bsM/2$;
\beq\label{eq:M}\begin{aligned}
	\im \bsM_{\bsb}(Z)&> 0, & \im \tr \bsM_{\bsb}(Z)&> 0,	\\
	\norm{\bsM_{\bsb}(Z)}&\leq \norm{(\im Z)^{-1}}, \qquad& \absv{\tr \bsM_{\bsb}(Z)}&\leq \norm{(\im Z)^{-1}}.
\end{aligned}\eeq
In what follows, we often take the variable $Z\in \bbH_{+}(M_{2}(\C))$ of the form
\begin{equation}
	\Theta(z,\eta)\deq\begin{pmatrix} \ii\eta & z \\ \ol{z} & \ii\eta\end{pmatrix},\qquad z\in\C,\eta>0.
\end{equation}
 By Schur complement formula, we can easily check that $ M_{\bsb}(\Theta(z,\eta))$ is itself of the form $\Theta(\wt{z},\wt{\eta})$ for some $\wt{z}\in\C,\wt{\eta}>0$. In particular, $\tr M_{\bsb}(\Theta(z,\eta))$ is purely imaginary:
\begin{equation}\label{eq:Mim}
	\tr{M_{\bsb}(\Theta(z,\eta))}=\Brkt{\frac{\ii\eta}{\absv{\bsb-z}^{2}+\eta^{2}}}\in\ii (0,\infty),\qquad z\in\C,\eta>0.
\end{equation}


\begin{rem}\label{rem:full}
	As easily seen, the $M_{2}(\C)$-valued Stieltjes transform of $\bsb$ only determines $*$-moments of the form $\brkt{\absv{\bsb}^{2k}}$ or $\brkt{\absv{\bsb}^{2k}\bsb}$. Instead, a further generalization determines the full $*$-distribution of $\bsb$, referred to as fully matricial Stieltjes transform; it assigns a $(2m\times 2m)$ matrix to each $m\in\N$ and $B\in \bbH_{+}(M_{2m}(\C))$, given by
	\beq\label{eq:Gfn_full}
	((\Brkt{\cdot}\otimes I_{2})\otimes I_{m})(\bsH_{\bsa_{n}}\otimes I_{m}-B)^{-1}\to ((\Brkt{\cdot}\otimes I_{2})\otimes I_{m})(\bsH_{\bsa}\otimes I_{m}-B)^{-1}\in M_{2m}(\C).
	\eeq
	Indeed, recall from \cite[Remark 2.7]{Belinschi-Popa-Vinnikov2012} that $\bsa_{n}\to\bsa$ in $*$-distribution if and only if the fully matricial Stieltjes transform converges.
\end{rem}

\subsection{Free probabilistic inputs}
In what follows, we consider $(M_{2}(\caM),\brkt{\cdot},M_{2}(\C))$ as an $M_{2}(\C)$-valued $W\adj$-probability space (see \cite[Section 2]{Belinschi-Mai-Speicher2017} for details), where $M_{2}(\C)$ is equipped with the usual tracial state $\tr$ in \eqref{eq:M}. The main goal of this section is to record the Schwinger-Dyson equation (see \eqref{eq:Dyson} below) for the Stieltjes transform $\bsM_{\bsa_{t}}$ from \cite{Zhong2021}.
More precisely, we have the following result.
\begin{prop}[{\cite[Theorem 3.8]{Zhong2021}}]\label{prop:Dyson}
	Let $\bsa,\bsx\in\caM$ be as in Definition \ref{defn:model}. Then, for any $z\in\C$, $\eta>0$, and $t\geq0$, the $(2\times 2)$ matrix $\bsM_{\bsa_{t}}(\Theta(z,\eta))$ satisfies 
	\beq\label{eq:Dyson}
	M_{\bsa_{t}}(\Theta(z,\eta))=M_{\bsa}(\Theta(z,\eta)+t\tr M_{\bsa_{t}}(\Theta(z,\eta))).
	\eeq
	Equivalently,
	\begin{equation}
		M_{\bsa_{t}-z}(\ii\eta)=M_{\bsa-z}(\ii\eta+t\tr M_{\bsa_{t}-z}(\ii\eta)).
	\end{equation}
\end{prop}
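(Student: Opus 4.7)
The plan is to place everything in the $M_{2}(\C)$-valued $W^{*}$-probability space $(M_{2}(\caM), E)$, where $E \deq \mathrm{id}_{2} \otimes \brkt{\cdot}$ is the block partial trace already used in Section~\ref{sec:prelim}. Since $\bsH_{\bsa_{t}} = \bsH_{\bsa} + \sqrt{t}\,\bsH_{\bsx}$, the proposition will follow once I (i) establish that $\bsH_{\bsa}$ and $\sqrt{t}\,\bsH_{\bsx}$ are free over $M_{2}(\C)$, (ii) identify $\sqrt{t}\,\bsH_{\bsx}$ as an $M_{2}(\C)$-valued semicircular element with explicit covariance, and (iii) apply the standard operator-valued subordination identity for a semicircular summand, finally simplifying using the special form of $\Theta(z,\eta)$.

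For (i), the $*$-subalgebras $M_{2}(\C)\langle \bsH_{\bsa}\rangle$ and $M_{2}(\C)\langle \bsH_{\bsx}\rangle$ are just the $2\times 2$ matrix amplifications of the scalar $*$-algebras generated by $\bsa$ and $\bsx$; amalgamated freeness over $M_{2}(\C)$ with respect to $E$ then follows from the scalar $*$-freeness of $\bsa$ and $\bsx$, since expanding an alternating product entrywise reduces each matrix entry of $E[\cdots]$ to a sum of scalar mixed moments of alternating centred elements in the two scalar $*$-algebras, which vanish by assumption. For (ii), a direct computation using $\brkt{\bsx\bsx\adj}=\brkt{\bsx\adj\bsx}=1$ and $\brkt{\bsx^{2}}=\brkt{(\bsx\adj)^{2}}=0$ yields the $M_{2}(\C)$-valued covariance
\[ \eta_{t}(b) \deq t\, E[\bsH_{\bsx}\, b\, \bsH_{\bsx}] = t\diag(b_{22},b_{11}), \qquad b=(b_{ij})\in M_{2}(\C). \]
All higher $M_{2}(\C)$-valued free cumulants of $\bsH_{\bsx}$ vanish: writing $\bsx=(\bss_{1}+\ii\bss_{2})/\sqrt{2}$ with scalar semicircular $\bss_{1},\bss_{2}$, the vanishing of all scalar $*$-cumulants of order $\neq 2$ in $(\bsx,\bsx\adj)$ is preserved under matrix amplification, giving $M_{2}(\C)$-semicircularity of $\bsH_{\bsx}$.

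With (i) and (ii) in hand, the standard fixed-point equation for operator-valued free additive convolution with a semicircular summand yields
\[ \bsM_{\bsa_{t}}(Z) = \bsM_{\bsa}\bigl(Z + \eta_{t}(\bsM_{\bsa_{t}}(Z))\bigr), \qquad Z\in \bbH_{+}(M_{2}(\C)), \]
where the ``$+$'' sign (rather than the more familiar ``$-$'') reflects the convention $\bsG_{\bsb}(Z)=(\bsH_{\bsb}-Z)^{-1}$ from \eqref{eq:UHP}. Specializing to $Z=\Theta(z,\eta)$, a Schur complement computation of the type producing \eqref{eq:Mim} shows that $\bsM_{\bsa_{t}}(\Theta(z,\eta))$ is itself of the block form $\Theta(\wt z,\wt\eta)$ for some $\wt z\in\C$ and $\wt\eta>0$, with equal, purely imaginary diagonal entries both equal to $\brkt{\ii\eta/(\absv{\bsa_{t}-z}^{2}+\eta^{2})}$. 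Since $\eta_{t}$ depends only on the diagonal of its argument, $\eta_{t}(\bsM_{\bsa_{t}}(\Theta(z,\eta))) = t\tr\bsM_{\bsa_{t}}(\Theta(z,\eta))\cdot I_{2}$, and the claimed scalar-shift equation follows.

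The main technical obstacle will be step (ii), namely a rigorous verification of the $M_{2}(\C)$-valued semicircularity of $\bsH_{\bsx}$ together with the stated covariance, since this requires translating the scalar $*$-cumulant structure of the circular element into the operator-valued setting. Once this is done, steps (i) and (iii) amount to bookkeeping within the operator-valued free probability framework.
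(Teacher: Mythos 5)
Your proposal is correct and follows essentially the route the paper sketches: the paper remarks that Proposition~\ref{prop:Dyson} follows from the operator-valued subordination result (Proposition~\ref{prop:subor}) applied to the Hermitizations, deferring details to \cite{Zhong2021}, and your derivation fills in exactly those details — the amalgamated freeness of $\bsH_{\bsa}$ and $\bsH_{\bsx}$ over $M_{2}(\C)$, the $M_{2}(\C)$-semicircularity of $\bsH_{\bsx}$ with covariance $\eta_{t}(b)=t\diag(b_{22},b_{11})$, and the observation that $\bsM_{\bsa_{t}}(\Theta(z,\eta))$ has equal diagonal entries so that $\eta_{t}$ acts as the scalar shift $t\tr\bsM_{\bsa_{t}}(\Theta(z,\eta))I_{2}$. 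The only organizational difference is that you invoke the semicircular fixed-point equation directly rather than going through the two subordination functions $\omega_{1},\omega_{2}$ of Proposition~\ref{prop:subor}, which is a slightly cleaner presentation of the same underlying argument.
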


We remark that \cite[Theorem 3.8]{Zhong2021} concerns elliptic $\bsx$ beyond circular, and Proposition~\ref{prop:Dyson} is a special case of their result. 
Proposition \ref{prop:Dyson} offers something beyond the spectral distribution of $\absv{\bsa+\sqrt{t}\bsx}$, in contrast to the Hermitian case. Notice that both sides of \eqref{eq:Dyson} are $(2\times 2)$ matrices of the form
\beq
\begin{pmatrix} \ii\eta & z \\ \ol{z} & \ii\eta\end{pmatrix},\qquad w\in\C_{+},\eta>0,
\eeq
so that \eqref{eq:Dyson} gives two different equations, one for diagonal and the other for off-diagonal entries. These two equations do not imply one another in general; see \eqref{eq:subor_M}. In fact, the identity between off-diagonal entries played a crucial role in \cite{Zhong2021,Belinschi-Yin-Zhong2022} (see e.g. the proof of \cite[Theorem 4.2]{Zhong2021}).

We will not repeat the proof of Proposition \ref{prop:Dyson}, but only remark that it is a consequence of the following operator-valued subordination result using that $\bsx$ and $\bsa$ are $*$-free.
	\begin{prop}[{\cite[Theorem 2.7]{Belinschi-Mai-Speicher2017}}]\label{prop:subor}
		Let $\bsb,\boldsymbol{c}\in\caM$ be $*$-free. Then $\bsH_{\bsb}$ and $\bsH_{\boldsymbol{c}}$ are free over the canonical sub-algebra $M_{2}(\C)$ of $M_{2}(\caM)$, and there exists a unique pair of Fr\'{e}chet analytic self-maps $\omega_{1},\omega_{2}$ of $\bbH_{+}(M_{2}(\C))$ such that the following hold for each $Z\in\bbH_{+}(M_{2}(\C))$.
		\begin{align}
			\im\omega_{1}(Z)\geq&\im Z\AND \im\omega_{2}(Z)\geq\im Z;	\\
			-\bsM_{\bsb}(\omega_{1}(Z))^{-1}=&\omega_{1}(Z)+\omega_{2}(Z)-Z =-\bsM_{\boldsymbol{c}}(\omega_{2}(Z))^{-1};	\label{eq:subor2}\\
			\bsM_{\bsb}(\omega_{1}(Z))=&\bsM_{\boldsymbol{c}}(\omega_{2}(Z)).\label{eq:subor3}
		\end{align}
		Furthermore, the two quantities in \eqref{eq:subor3} are equal to $\bsM_{\bsb+\boldsymbol{c}}(Z)$.
	\end{prop}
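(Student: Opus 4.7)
The plan is to reduce Proposition~\ref{prop:subor} to the general operator-valued free additive convolution framework, which requires two distinct ingredients: first, the $M_2(\C)$-valued freeness of the Hermitizations $\bsH_{\bsb}$ and $\bsH_{\boldsymbol{c}}$; and second, the existence, uniqueness, and Fr\'echet analyticity of the subordination pair $(\omega_1, \omega_2)$ for a generic $M_2(\C)$-valued free additive convolution. With these two in hand, identities \eqref{eq:subor2} and \eqref{eq:subor3} are the standard Voiculescu-type subordination equations rewritten in terms of the $R$-transform / reciprocal Cauchy transform, and the final sentence identifying $\bsM_{\bsb+\boldsymbol{c}}(Z)$ is recovered by substitution.

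For the first ingredient, I would argue as follows. The algebra generated by $\bsH_{\bsb}$ and $M_2(\C) \subset M_2(\caM)$ is spanned by $2\times 2$ matrices whose entries are polynomials in $\bsb, \bsb^*$; similarly for $\bsH_{\boldsymbol{c}}$. The block-wise partial trace $\brkt{\cdot}\colon M_2(\caM)\to M_2(\C)$ is the canonical conditional expectation, and it acts entry-wise. To check freeness over $M_2(\C)$, I would take a centered alternating product of elementary matrices $E_{ij}\otimes p_k(\bsb,\bsb^*)$ and $E_{kl}\otimes q_k(\boldsymbol{c},\boldsymbol{c}^*)$, expand the block-matrix product, and show by a moment computation that the $*$-freeness of $\bsb$ and $\boldsymbol{c}$ in $(\caM,\brkt{\cdot})$ forces each entry of the partial-traced product to vanish. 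This is a direct but slightly tedious bookkeeping exercise that reduces $M_2(\C)$-valued freeness to scalar-valued $*$-freeness.

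For the second ingredient, I would set up a fixed point argument in the non-commutative upper half plane $\bbH_+(M_2(\C))$. Define the reciprocal Cauchy transforms $F_\bsb(W) \deq \bsM_\bsb(W)^{-1}$ and $F_{\boldsymbol{c}}(W) \deq \bsM_{\boldsymbol{c}}(W)^{-1}$; the target equations \eqref{eq:subor2} amount to finding $\omega_1,\omega_2$ with
\begin{equation}
F_\bsb(\omega_1(Z)) = \omega_1(Z)+\omega_2(Z)-Z, \qquad F_{\boldsymbol{c}}(\omega_2(Z)) = \omega_1(Z)+\omega_2(Z)-Z.
\end{equation}
Eliminating yields the self-consistency relation $\omega_1(Z) = h_{\boldsymbol{c}}(h_\bsb(\omega_1(Z))+Z)+Z$ with $h_\bsb(W)\deq F_\bsb(W)-W$, and analogously for $\omega_2$. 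I would then invoke the Earle--Hamilton / Denjoy--Wolff theorem on the hyperbolic space $\bbH_+(M_2(\C))$ (equipped with its Carath\'eodory metric) to show that the map $W\mapsto h_{\boldsymbol{c}}(h_\bsb(W)+Z)+Z$ is a strict contraction for each fixed $Z\in\bbH_+(M_2(\C))$, yielding a unique attracting fixed point $\omega_1(Z)$, with $\omega_2(Z)$ defined symmetrically. Fr\'echet analyticity in $Z$ follows by an implicit function argument, and the inequalities $\im\omega_i(Z)\geq \im Z$ follow from the Nevanlinna property $\im h_\bsb, \im h_{\boldsymbol{c}} \geq 0$ on $\bbH_+(M_2(\C))$.

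The main obstacle is the fixed-point step: verifying that the composition $W\mapsto h_{\boldsymbol{c}}(h_\bsb(W)+Z)+Z$ maps $\bbH_+(M_2(\C))$ strictly into itself, so that the Earle--Hamilton theorem applies and furnishes a unique attracting fixed point. This hinges on precise control of the Nevanlinna representations of the operator-valued Cauchy transforms in $\bbH_+(M_2(\C))$ and the fact that neither $\bsb$ nor $\boldsymbol{c}$ is scalar (otherwise $F_\bsb$ would be affine and the contraction would degenerate). Once this is established, the final matching $\bsM_{\bsb+\boldsymbol{c}}(Z) = \bsM_\bsb(\omega_1(Z)) = \bsM_{\boldsymbol{c}}(\omega_2(Z))$ is obtained by verifying directly that both sides satisfy the defining operator-valued Schwinger--Dyson equation for $\bsb+\boldsymbol{c}$ together with the uniqueness of its Cauchy transform in $\bbH_+(M_2(\C))$.
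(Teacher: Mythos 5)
The paper does not prove Proposition~\ref{prop:subor}: it is quoted verbatim as \cite[Theorem~2.7]{Belinschi-Mai-Speicher2017} and used as a black box (its only role here is to justify Proposition~\ref{prop:Dyson}). So there is no in-paper argument to compare against; what you have written is a sketch of the cited Belinschi--Mai--Speicher proof itself. Your outline does capture the standard structure of that argument: reduce matrix-valued freeness of the Hermitizations to scalar $*$-freeness, then run a Denjoy--Wolff/Earle--Hamilton fixed-point scheme for the subordination maps on $\bbH_{+}(M_{2}(\C))$. Given that you are re-deriving a quoted external theorem rather than an argument internal to the paper, this is more work than the context requires, but it is not wrong in spirit.

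There is, however, a sign slip that breaks the fixed-point setup as written. You define $F_{\bsb}(W)\deq \bsM_{\bsb}(W)^{-1}$, but with the paper's convention $\bsG_{\bsb}(Z)=(\bsH_{\bsb}-Z)^{-1}$ one has $\im\bsM_{\bsb}(W)>0$ on $\bbH_{+}(M_{2}(\C))$ (see \eqref{eq:G}--\eqref{eq:M}), hence $\im\bsM_{\bsb}(W)^{-1}<0$: your $F_{\bsb}$ maps the upper half-plane to the \emph{lower} half-plane, so $h_{\bsb}=F_{\bsb}-\mathrm{id}$ has $\im h_{\bsb}<0$ and the iteration $W\mapsto h_{\boldsymbol{c}}(h_{\bsb}(W)+Z)+Z$ leaves $\bbH_{+}(M_{2}(\C))$. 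The correct reciprocal transform is $F_{\bsb}(W)=-\bsM_{\bsb}(W)^{-1}$, exactly matching the minus sign visible in \eqref{eq:subor2}; with this sign one has the Nevanlinna inequality $\im F_{\bsb}(W)\geq\im W$, so $\im h_{\bsb}\geq 0$ and the composition maps $\bbH_{+}(M_{2}(\C))$ into $\{W:\im W\geq\im Z\}$, as needed for Earle--Hamilton. Separately, the aside that the argument ``hinges on the fact that neither $\bsb$ nor $\boldsymbol{c}$ is scalar'' is not a hypothesis of the proposition: the theorem as stated (and as proved in \cite{Belinschi-Mai-Speicher2017}) holds for arbitrary $*$-free $\bsb,\boldsymbol{c}$, with the degenerate cases handled either by a limiting argument or directly (when, say, $\bsH_{\bsb}\in M_{2}(\C)$, the subordination maps are explicit affine shifts). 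You should not build that restriction into a proof of an unrestricted statement. Finally, the closing claim that $\bsM_{\bsb+\boldsymbol{c}}(Z)=\bsM_{\bsb}(\omega_{1}(Z))$ ``is obtained by verifying directly that both sides satisfy the defining operator-valued Schwinger--Dyson equation'' is hand-wavy: the standard route is through additivity of the operator-valued $R$-transform under $M_{2}(\C)$-freeness, or Voiculescu's coalgebra morphism argument; there is no single canonical ``Schwinger--Dyson equation'' characterizing $\bsM_{\bsb+\boldsymbol{c}}$ until freeness has already been invoked.
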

\begin{rem}
	Recall from Remark \ref{rem:full} that the $*$-distribution of an operator $\bsa$ is determined by its fully matricial Stieltjes transform (see \eqref{eq:Gfn_full}). 
	Since Proposition \ref{prop:subor} shows the analytic subordination for only $(2\times2)$-valued Stieltjes transform, the functions $\omega_{1},\omega_{2}$ by themselves determine the Brown measure of $\bsb+\boldsymbol{c}$ but not its $*$-distribution. 
	However, in fact, \cite[Theorem 2.7]{Belinschi-Mai-Speicher2017} covers general operator-valued setting beyond $M_{2}(\C)$, hence shows  fully matricial analytic subordination, i.e. there are natural `lifts' of $\omega_{1},\omega_{2}$ to $\bbH_{+}(M_{2n}(\C))$ for all $n$; see also \cite[Theorem 2.13 and Lemma 2.14]{Belinschi-Popa-Vinnikov2012}.  
	We also remark that, in the Hermitian case, the scalar-valued analytic subordination functions can serve as an alternative definition of free additive convolution; see \cite[Theorem 4.1]{Belinschi-Bercovici2007}.
	Similarly, if we extend $\omega_{1},\omega_{2}$ to the fully matricial setting, they can determine the $*$-distribution of $\bsb+\boldsymbol{c}$ with these functions.
\end{rem}

For later purposes, we define the map $v_{t}:\C\times(0,\infty)\to(0,\infty)$ as
\beq\label{eq:def_v}
v_{t}(z,\eta)\deq \eta+t\im\tr\brkt{G_{\bsa_{t}}(\Theta(z,\eta))}\equiv \eta+t\im\tr M_{\bsa_{t}}(\Theta(z,\eta)).
\eeq
Note that \eqref{eq:Mim}, \eqref{eq:Dyson}, and \eqref{eq:def_v} give for each $z\in\C$ and $\eta>0$ that
\begin{equation}\label{eq:veq}
	M_{\bsa_{t}}(\Theta(z,\eta))=M_{\bsa}(\Theta(z,\eta)+t\im \tr M_{\bsa_{t}}(\Theta(z,\eta)))=M_{\bsa}(\Theta(z,v_{t}(z,\eta))),
\end{equation}
	so that $\ii v_{t}(z,\eta)=\omega_{1}(\Theta(z,\eta))$ where $\omega_{1}$ is the Fr\'{e}chet analytic function in Proposition \ref{prop:subor}. Then it immediately follows that $v_{t}$ is real analytic in $\C\times (0,\infty)$. 
	Writing out the elements of the $(2\times 2)$ matrices in \eqref{eq:veq} using Schur complement formula, 
	we get
	\beq\begin{aligned}\label{eq:subor_M}
		\Brkt{\dfrac{\eta}{\absv{\bsa_{t}-z}^{2}+\eta^{2}}}&=\Brkt{\dfrac{v_{t}(z,\eta)}{\absv{\bsa-z}^{2}+v_{t}(z,\eta)^{2}}},	\\
		\Brkt{\dfrac{1}{\absv{\bsa_{t}-z}^{2}+\eta^{2}}(\bsa_{t}-z)}&=\Brkt{\dfrac{1}{\absv{\bsa-z}^{2}+v_{t}(z,\eta)^{2}}(\bsa-z)}.
	\end{aligned}\eeq
	Combining \eqref{eq:def_v} and \eqref{eq:subor_M}, we find that $v\equiv v_{t}(z,\eta)$ satisfies
	\beq\label{eq:vv}
	v=\eta +t\Brkt{\frac{v}{\absv{\bsa-z}^{2}+v^{2}}}.
	\eeq
	
	While the original definition of $v_{t}$ in \eqref{eq:def_v} is given in terms of the spectral density of $\absv{\bsa_{t}-z}=\absv{\bsa+\sqrt{t}\bsx-z}$ at the origin (regularized by $\eta$), the identity \eqref{eq:vv} may serve as an alternative definition of $v_{t}(z,\eta)$. Indeed, with a few lines of simple computation in \cite{Zhong2021} one can show that \eqref{eq:vv} has a unique positive solution $v$ for each fixed $t,\eta>0$, and $z\in\C$, and it coincides with $v_{t}(z,\eta)$ given in \eqref{eq:def_v}.  Finally, we remark that \eqref{eq:vv} trivially implies $v^{2}-\eta v\leq t$, which shows that $v_{t}(z,\eta)$ is bounded from above as
	\beq\label{eq:v_bdd}
	v_{t}(z,\eta)-\eta\leq t^{-1/2},\qquad z\in\C,\eta>0,t>0.
	\eeq
	\subsection{Regularized Brown measure}
	In this section, we collect preliminary results on the function $v$ and recall the proof of \cite[Theorem 7.10]{Belinschi-Yin-Zhong2022}.
	\begin{prop}[{\cite[Lemmas 3.6 and 3.7]{Zhong2021}}]\label{prop:subor_cont} Let $t>0$ and define $v_{t}:\C\times (0,\infty)\to(0,\infty)$ as in \eqref{eq:def_v}.
		\begin{itemize}
			\item[(i)] For each $z\in\C$, the map $v_{t}(z,\cdot)$ is strictly increasing on $(0,\infty)$.
			\item[(ii)] The map $v_{t}$ extends continuously to $\C\times[0,\infty)$.
			\item[(iii)] The extension $v_{t}(\cdot,0)$ is positive and real analytic in the open set $\caD_{t}$ and identically zero on $\C\setminus\caD_{t}$.
		\end{itemize}
	\end{prop}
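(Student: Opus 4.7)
The strategy is to treat the scalar fixed-point identity \eqref{eq:vv} as the defining relation and extract all properties from it. For $v>0$ rewrite \eqref{eq:vv} as $F_{z,\eta}(v)=0$ where
\[
F_{z,\eta}(v)\deq 1-\frac{\eta}{v}-t\Brkt{\frac{1}{\absv{\bsa-z}^{2}+v^{2}}}.
\]
A direct computation gives $\partial_{v}F_{z,\eta}(v)=\eta/v^{2}+2tv\brkt{(\absv{\bsa-z}^{2}+v^{2})^{-2}}>0$, so $F_{z,\eta}$ is strictly increasing on $(0,\infty)$ with limits $-\infty$ and $1$ at the endpoints. Hence $F_{z,\eta}$ has a unique positive zero, which must coincide with $v_{t}(z,\eta)$. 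The implicit function theorem then gives real-analyticity of $v_{t}$ on $\C\times(0,\infty)$, and $\partial_{\eta}v_{t}=1/\partial_{v}F_{z,\eta}>0$ immediately yields~(i).

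By the monotonicity of (i), the pointwise limit $v_{t}(z,0^{+})\deq\lim_{\eta\downarrow0}v_{t}(z,\eta)$ exists. The key dichotomy is that this limit is strictly positive precisely when $z\in\caD_{t}$. If $z\in\caD_{t}$ and $v_{t}(z,\eta)\downarrow 0$, then substituting into $F_{z,\eta}(v_{t}(z,\eta))=0$ and applying monotone convergence to $\brkt{(\absv{\bsa-z}^{2}+v^{2})^{-1}}\uparrow f_{\bsa}(z)>1/t$ would force $\eta/v\to 1-tf_{\bsa}(z)<0$, which is impossible. Conversely, if $z\notin\caD_{t}$ and $v^{\ast}\deq v_{t}(z,0^{+})>0$, passing to the limit in \eqref{eq:vv} gives $1=t\brkt{(\absv{\bsa-z}^{2}+(v^{\ast})^{2})^{-1}}<tf_{\bsa}(z)\leq 1$, again a contradiction. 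On $\caD_{t}$, the boundary value $w(z)\deq v_{t}(z,0^{+})$ satisfies the simpler relation $1=t\brkt{(\absv{\bsa-z}^{2}+w^{2})^{-1}}$, whose left-hand side has nonvanishing $w$-derivative; the real-analytic implicit function theorem applied to this equation yields~(iii).

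For the joint continuity (ii), only the boundary $\{\eta=0\}$ is nontrivial. Pick a sequence $(z_{n},\eta_{n})\to(z_{0},0)$ and set $v_{n}\deq v_{t}(z_{n},\eta_{n})$; by \eqref{eq:v_bdd} this sequence is uniformly bounded. Any subsequential limit $v^{\ast}\geq 0$ satisfies, after passing to the limit in \eqref{eq:vv}, the identity
\[
v^{\ast}=t\Brkt{\frac{v^{\ast}}{\absv{\bsa-z_{0}}^{2}+(v^{\ast})^{2}}}.
\]
Combined with the dichotomy of the previous paragraph and the monotonicity bound $v_{n}\geq v_{t}(z_{n},0^{+})$, this forces $v^{\ast}=v_{t}(z_{0},0^{+})$: if $z_{0}\in\caD_{t}$, continuity of $w$ on $\caD_{t}$ from (iii) gives $v^{\ast}\geq w(z_{0})>0$, and the unique positive solution of the limiting equation then pins $v^{\ast}=w(z_{0})$; if $z_{0}\notin\caD_{t}$, the dichotomy rules out $v^{\ast}>0$.

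The main technical obstacle is justifying the passage to the limit in the operator trace when $v^{\ast}>0$ but $z_{0}\in\spec(\bsa)$, since the operator $\absv{\bsa-z_{n}}^{2}+v_{n}^{2}$ is not uniformly norm-invertible. This is resolved by observing that once $v^{\ast}>0$ one has $v_{n}\geq v^{\ast}/2$ for large $n$, so $(\absv{\bsa-z_{n}}^{2}+v_{n}^{2})^{-1}$ has operator norm at most $4/(v^{\ast})^{2}$ and depends norm-continuously on $(\bsa-z_{n},v_{n})$; trace convergence then follows from the normality of $\brkt{\cdot}$. The case $v^{\ast}=0$ is handled directly from $v_{n}=\eta_{n}+t\brkt{v_{n}(\absv{\bsa-z_{n}}^{2}+v_{n}^{2})^{-1}}\to 0$, which is automatically consistent with the dichotomy at $z_0$. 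This completes all three items.
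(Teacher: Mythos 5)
The paper does not prove this proposition but cites Zhong's Lemmas 3.6 and 3.7, and the surrounding text (the remarks around \eqref{eq:vv} and \eqref{eq:vvv}) indicates that the cited proof is likewise built on the scalar fixed-point characterization of $v_t$; your proof reconstructs exactly this approach and is correct. One small arithmetic slip: with $F_{z,\eta}(v)=1-\eta/v-t\brkt{(\absv{\bsa-z}^{2}+v^{2})^{-1}}$ the implicit function theorem gives $\partial_{\eta}v_{t}=-\partial_{\eta}F/\partial_{v}F=(1/v)/\partial_{v}F$, not $1/\partial_{v}F$; the extra factor $1/v>0$ does not change the sign, so item (i) is unaffected. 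The rest — strict monotonicity of $F_{z,\eta}$ in $v$ with endpoint limits $-\infty$ and $1$, the dichotomy $v_t(z,0^+)>0\iff z\in\caD_t$ via monotone convergence and the strict inequality $\brkt{(\absv{\bsa-z}^{2}+(v^*)^{2})^{-1}}<f_{\bsa}(z)$, the identification of the boundary value with the unique positive root of \eqref{eq:vvv}, and the subsequence argument for joint continuity (including the correct handling of the uniform invertibility issue when $v^*>0$ and the triviality when $v^*=0$) — is sound.
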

	In fact, when $z\in\caD$ we easily see that the equation 
	\beq\label{eq:vvv}
	\Brkt{\frac{1}{\absv{\bsa-z}^{2}+v^{2}}}=\frac{1}{t}
	\eeq
	has a positive solution $v>0$, and this solution is unique and identical to $v_{t}(z,0)$; see \cite[Lemma 3.6]{Zhong2021}.
	
	Next, we recall the following lemma from \cite{Belinschi-Yin-Zhong2022}, which introduces a smoothed density $\rho_{t,\eta}$ of the Brown measure $\rho_{\bsa_{t}}$, indexed by $\eta>0$. The lemma also shows that $\rho_{t,\eta}$ admits an expression involving $v_{t}(z,\eta)$ instead of $\bsx$, and that it is uniformly bounded by $(\pi t)^{-1}$.
	\begin{lem}[{\cite[Lemma 7.11]{Belinschi-Yin-Zhong2022}}]\label{lem:FC_smooth}
		Let $\bsa,\bsx\in\caM$ be as in Definition \ref{defn:model} and $t>0$ be fixed. For each $\eta>0$, the map
		\beq\label{eq:FC_smooth}
		z\mapsto \brkt{\log(\absv{\bsa_{t}-z}^{2}+\eta^{2})}>0
		\eeq
		is $C^{2}$ in $\C\cong\R^{2}$. Defining 
		\beq\label{eq:reg_def}
		\rho_{t,\eta}(z)\deq\frac{1}{4\pi}\Delta_{z}\brkt{\log(\absv{\bsa_{t}-z}^{2}+\eta^{2})}	=\frac{1}{2\pi}\Delta_{z}\brkt{\log\sqrt{\absv{\bsa_{t}-z}^{2}+\eta^{2}}},
		\eeq
		we have
		\begin{multline}\label{eq:Delta_log_conc}
			\pi\rho_{t,\eta}(z)
			=\Brkt{\frac{1}{\absv{\bsa-z}^{2}+v^{2}}\frac{v^{2}}{\absv{\bsa-z}_{*}^{2}+v^{2}}}	\\ +\left(\frac{\eta}{2tv^{3}}+\Brkt{\dfrac{1}{(\absv{\bsa-z}^{2}+v^{2})^{2}}}\right)^{-1}\Absv{\Brkt{\dfrac{1}{(\absv{\bsa-z}^{2}+v^{2})^{2}}(\bsa-z)}}^{2},
		\end{multline}
		where we abbreviated $v\equiv v_{t}(z,\eta)$. Furthermore, $0<\rho_{t,\eta}(z)<(\pi t)^{-1}$ for all $z\in\C$.
	\end{lem}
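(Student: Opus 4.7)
The plan is to reduce \eqref{eq:Delta_log_conc} to a direct computation of $\partial_z\partial_{\bar z}\brkt{\log(\absv{\bsa_{t} - z}^2 + \eta^2)}$, first as an $\bsa_{t}$-expression, and then translate it to $\bsa$ via the Dyson/subordination equation of Proposition \ref{prop:Dyson}. Regularity is the easy step: since $(\absv{\bsa_{t} - z}^2 + \eta^2)^{-1}$ has operator norm at most $\eta^{-2}$ and $z \mapsto (\bsa_{t} - z)(\bsa_{t} - z)^*$ is a quadratic polynomial in $(z, \bar z)$ with bounded operator coefficients, dominated convergence lets one interchange $\partial_z, \partial_{\bar z}$ with $\brkt{\cdot}$ arbitrarily many times, so $h_\eta(z) := \brkt{\log(\absv{\bsa_{t} - z}^2 + \eta^2)}$ is smooth in $z$.

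For the explicit formula, write $\bsb = \bsa_{t} - z$, $G = (\bsb\bsb^* + \eta^2)^{-1}$, and $\wt G = (\bsb^*\bsb + \eta^2)^{-1}$. Differentiating under the trace and using the intertwining $\bsb^* G = \wt G \bsb^*$ together with cyclicity of $\brkt{\cdot}$ gives
\beqs
\partial_z h_\eta = -\brkt{\bsb^* G},\qquad \partial_{\bar z}\partial_z h_\eta = \eta^2 \brkt{\wt G G},
\eeqs
so $\pi\rho_{t,\eta}(z) = \eta^2 \brkt{\wt G G}$, initially as an expression in $\bsa_{t}$. To re-express it in terms of $\bsa$, I would differentiate the off-diagonal subordination identity $\brkt{\bsb^*/(\absv{\bsb}^2 + \eta^2)} = \brkt{(\bsa - z)^*/(\absv{\bsa - z}^2 + v^2)}$ from \eqref{eq:subor_M} in $\bar z$: the left side equals $-\eta^2 \brkt{\wt G G}$ by what was just computed, while the right side (via the analogous non-commutative manipulation plus a chain rule in $v = v_t(z, \eta)$) becomes $-v^2 q - 2v(\partial_{\bar z}v)\bar r$ with $q = \brkt{\wt G_v G_v}$, $\bar r = \brkt{(\bsa - z)^* G_v^2}$, and $G_v, \wt G_v$ the corresponding resolvents for $\bsa$. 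Implicit differentiation of \eqref{eq:vv} then yields
\beqs
\partial_{\bar z}v = \frac{tvr}{\eta/v + 2tv^2 p},\qquad r = \brkt{G_v^2(\bsa - z)},\quad p = \brkt{G_v^2},
\eeqs
and substituting produces exactly \eqref{eq:Delta_log_conc}.

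Positivity of $\rho_{t,\eta}$ follows from $v^2 q > 0$, since $\brkt{\wt G_v G_v} = \brkt{G_v^{1/2}\wt G_v G_v^{1/2}} > 0$. For the upper bound $\rho_{t,\eta} < (\pi t)^{-1}$, I plan to apply Cauchy-Schwarz to obtain $q \leq p$ (valid since $\wt G_v$ and $G_v$ share their spectral distribution under $\brkt{\cdot}$) and $\absv{r}^2 \leq p(g - v^2 p)$, combined with the identity $g := \brkt{G_v} = (v - \eta)/(tv)$ extracted from \eqref{eq:vv}. After algebraic simplification, $\pi t \rho_{t,\eta}$ is dominated by $tv^2 p(2v - \eta)/(\eta + 2tv^3 p)$, and the strict inequality reduces to $\eta(1 + tv^2 p) > 0$, which is immediate for $\eta > 0$.

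The main obstacle will be the bookkeeping of non-commutative operator identities throughout the derivation of \eqref{eq:Delta_log_conc}: keeping $\absv{\bsa - z}^2$ and $\absv{\bsa - z}_*^2$ distinct when commuting $(\bsa - z)^*$ past resolvents is essential, and a single misplaced factor would propagate through the implicit differentiation of $v$ and yield a wrong prefactor in the final formula. By contrast, the Cauchy-Schwarz argument for the upper bound is routine once the explicit formula is in hand.
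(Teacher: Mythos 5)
Your proof is correct and follows the same route the paper sketches after the lemma (and which underlies the cited [Belinschi-Yin-Zhong2022, Lemma 7.11]): compute $\partial_z\partial_{\bar z}\brkt{\log(\absv{\bsa_t-z}^2+\eta^2)}$ via the off-diagonal block of $\bsM_{\bsa_t-z}(\ii\eta)=\bsM_{\bsa-z}(\ii v_t(z,\eta))$, then close the formula by implicitly differentiating \eqref{eq:vv}. All the identities check out, including the substitution $\frac{2tv^2}{\eta/v+2tv^2 p}=(\eta/(2tv^3)+p)^{-1}$, the Cauchy--Schwarz bounds $q\le p$ and $\absv{r}^2\le p(g-v^2p)$ with $g=(v-\eta)/(tv)$, and the reduction of the strict upper bound to $\eta(1+tv^2p)>0$; note only that the paper's displayed \eqref{eq:log_dz} is missing a minus sign relative to your (correct) $\partial_z h_\eta=-\brkt{\bsb^*G}$, but this is immaterial since the sign cancels in $\partial_z\partial_{\bar z}$.
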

	One of the most important ideas in the proof of \cite[Lemma 7.11]{Belinschi-Yin-Zhong2022} is the identity
	\begin{equation}\label{eq:log_dz}
		\frac{\dd}{\dd z}\brkt{\log\absv{\bsa_{t}-z}^{2}+\eta^{2}}=\Brkt{(\bsa_{t}-z)\adj\frac{1}{\absv{\bsa_{t}-z}^{2}+\eta^{2}}},
	\end{equation}
	whose right-hand side is an off-diagonal block of $\bsM_{\bsa_{t}-z}(\ii\eta)=\bsM_{\bsa-z}(\ii v_{t}(z,\eta))$. Then one can compute $\rho_{t,\eta}(z)$ by taking the $\ol{z}$-derivative of $\bsM_{\bsa-z}(\ii v_{t}(z,\eta))$, which can be done by differentiating the Dyson equation \eqref{eq:vv}.
	
	On the other hand, by taking the limit $\eta\to0$ in Lemma \ref{lem:FC_smooth}, we can recover the Brown measure $\rho_{\bsa_{t}}$: Indeed, from the definition \eqref{eq:Brown} of the Brown measure and \eqref{eq:reg_def}, we have for each $h\in C_{c}^{2}(\C)$ that
	\begin{multline*}
		\int_{\C}h(z)(\dd\rho_{\bsa_{t}}(z)-\rho_{t,\eta}(z)\dd^{2}z)=\frac{1}{4\pi}\int_{\C}\Delta h(z)\Brkt{\log\frac{\absv{\bsa_{t}-z}^{2}}{\absv{\bsa_{t}-z}^{2}+\eta^{2}}}\dd^{2}z	\\
		=-\frac{1}{2\pi}\int_{\C}\Delta h(z)\int_{0}^{\eta}\Brkt{\frac{y}{\absv{\bsa_{t}-z}^{2}+y^{2}}}\dd y\dd^{2}z
		=-\frac{1}{2\pi}\int_{\C}\Delta h(z)\int_{0}^{\eta}(v_{t}(z,y)-y)\dd y\dd^{2}z,
	\end{multline*}
	where we used the definition of $v_{t}(z,y)$ in \eqref{eq:def_v} in the last equality. Recalling \eqref{eq:v_bdd} we get
	\beq\label{eq:reg_conv}
	\Absv{\int_{\C}h(z)(\dd\rho_{\bsa_{t}}(z)-\rho_{t,\eta}(z)\dd^{2}z)}\leq \frac{\eta}{\sqrt{t}}\norm{\Delta h}_{L^{1}(\C)}.
	\eeq
	A qualitative version of this weak convergence was used in \cite{Belinschi-Yin-Zhong2022} to compute the density of $\rho_{\bsa_{t}}$ in the bulk as a limit of $\rho_{t,\eta}$:	
	\begin{lem}[Theorem 7.10 of \cite{Belinschi-Yin-Zhong2022}]\label{lem:dens}\
		\begin{itemize}
			\item[(i)] The Brown measure $\rho_{\bsa_{t}}$ absolutely continuous on $\C$ with a density $\rho$. 
			\item[(ii)] The density admits the following formula for $z\notin\partial\caD_{t}$: If $z\in\caD_{t}$ we have
			\begin{align}
				\rho(z)&=\frac{1}{\pi}\Brkt{\dfrac{1}{\absv{\bsa-z}^{2}+v^{2}}\dfrac{v^{2}}{\absv{\bsa-z}_{*}^{2}+v^{2}}}	\nonumber\\ \qquad &\qquad\qquad+\Brkt{\dfrac{1}{(\absv{\bsa-z}^{2}+v^{2})^{2}}}^{-1}\Absv{\Brkt{\dfrac{1}{(\absv{\bsa-z}^{2}+v^{2})^{2}}(\bsa-z)}}^{2},\label{eq:dens_bulk}
			\end{align}
			where we abbreviated $v=v_{t}(z,0)$, and if $z\in\C\setminus\ol{\caD}_{t}$ we have $\rho(z)=0$.
			
			\item[(iii)] We have
			\begin{equation}\label{eq:bulk}
				\lim_{\eta\to 0}\rho_{t,\eta}(z)=\rho(z)
			\end{equation}
			uniformly on compact subsets of $\caD_{t}$.
		\end{itemize}
	\end{lem}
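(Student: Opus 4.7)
The proof assembles the pieces already laid out: the explicit smoothed-density formula and uniform bound from Lemma \ref{lem:FC_smooth}, the quantitative weak convergence \eqref{eq:reg_conv}, and the continuity properties of the subordination function $v_t$ from Proposition \ref{prop:subor_cont}. I would prove (i) and (iii) first, and then read off (ii) from them combined with the support statement of Theorem \ref{thm:prev}~(i).

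For (i), the estimate \eqref{eq:reg_conv} gives
\[
\int_\C h(z)\,\dd\rho_{\bsa_t}(z)=\lim_{\eta\to 0}\int_\C h(z)\rho_{t,\eta}(z)\,\dd^2 z
\]
for every $h\in C_c^2(\C)$, while Lemma \ref{lem:FC_smooth} supplies the uniform bound $0<\rho_{t,\eta}<1/(\pi t)$. In particular, the right-hand side is bounded by $\norm{h}_{L^1(\C)}/(\pi t)$ uniformly in $\eta$, and hence so is the left-hand side; the density of $C_c^2(\C)$ in $C_c(\C)$ extends this bound to arbitrary continuous compactly supported test functions. A standard duality argument then yields absolute continuity of $\rho_{\bsa_t}$ with a density $\rho\in L^\infty(\C)$ satisfying $0\leq \rho\leq 1/(\pi t)$.

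For (iii), fix a compact $K\subset\caD_t$. By Proposition \ref{prop:subor_cont}, $v_t(\cdot,\eta)$ converges uniformly on $K$ to $v_t(\cdot,0)$ as $\eta\to 0$, and $v_*\deq\min_{z\in K}v_t(z,0)$ is strictly positive by compactness together with the positivity of $v_t(\cdot,0)$ on $\caD_t$. Each scalar appearing in the right-hand side of \eqref{eq:Delta_log_conc} is a continuous function of $(z,v)$ on $K\times[v_*/2,\infty)$, since the functional-calculus integrals $\brkt{(\absv{\bsa-z}^2+v^2)^{-2}}$, $\brkt{(\absv{\bsa-z}^2+v^2)^{-1}v^2(\absv{\bsa-z}_*^2+v^2)^{-1}}$, and $\brkt{(\absv{\bsa-z}^2+v^2)^{-2}(\bsa-z)}$ are all jointly continuous in $(z,v)$ once $v$ is bounded away from zero. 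Meanwhile the singular boundary term $\eta/(2tv^3)$ vanishes uniformly on $K$ as $\eta\to 0$. Passing to the limit in \eqref{eq:Delta_log_conc} therefore produces the formula \eqref{eq:dens_bulk} as a uniform limit on $K$.

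For (ii), the uniform convergence from (iii) together with the weak convergence from (i) identifies $\rho$ almost everywhere on $\caD_t$ with the right-hand side of \eqref{eq:dens_bulk}; by continuity of that expression on $\caD_t$, one may choose a representative for which equality holds pointwise. The vanishing $\rho\equiv 0$ on $\C\setminus\ol{\caD}_t$ is immediate from Theorem \ref{thm:prev}~(i), which gives $\supp\rho_{\bsa_t}=\ol{\caD}_t$. The only nontrivial point in the whole argument is the joint continuity of the functional-calculus integrals when $v$ is kept away from zero; this is a routine spectral-calculus estimate, the essential free-probabilistic input, namely that $v_t(\cdot,0)$ solves \eqref{eq:vvv} and is positive real analytic on $\caD_t$, having already been packaged into Proposition \ref{prop:subor_cont}.
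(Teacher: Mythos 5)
Your reconstruction is correct, and it follows precisely the route the paper's exposition lays out. The paper itself does not prove this lemma --- it is cited from \cite{Belinschi-Yin-Zhong2022} --- but the discussion immediately preceding it (the quantitative weak-convergence estimate \eqref{eq:reg_conv}, together with the remark that a qualitative version of that estimate was used in \cite{Belinschi-Yin-Zhong2022} to compute the density as a limit of $\rho_{t,\eta}$) tells you exactly what to do: combine the uniform bound $0<\rho_{t,\eta}<1/(\pi t)$ with \eqref{eq:reg_conv} to get absolute continuity with $L^\infty$ density, then send $\eta\to 0$ in \eqref{eq:Delta_log_conc} on compact subsets of $\caD_t$, using Proposition~\ref{prop:subor_cont} to keep $v=v_t(z,\eta)$ uniformly bounded below so that the boundary term $\eta/(2tv^3)$ vanishes and the functional-calculus integrands converge. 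Two minor remarks. First, part (i) is literally a restatement of Theorem~\ref{thm:prev}(ii), cited from the same source, so your independent derivation is welcome but not strictly needed. Second, passing to the limit in \eqref{eq:Delta_log_conc} puts $\frac{1}{\pi}$ in front of \emph{both} terms of the density formula --- this is consistent with \eqref{eq:rho_in}, where the paper writes $\pi\rho(w)=\dots$ with no extra $\pi$'s; the display \eqref{eq:dens_bulk} as typeset in the lemma is missing the $\frac{1}{\pi}$ on the second term, so do not try to force your computed limit to match it verbatim.
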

	
	\section{Proof of Theorem \ref{thm:sharp}}\label{sec:proof}
	We prove Theorem \ref{thm:sharp} in this section. As such, we always impose Assumption~\ref{assump} on $\bsa$ throughout this section. For simplicity, since we consider fixed $t>0$ in Theorem \ref{thm:sharp}, we take $t=1$ without loss of generality and abbreviate 
	\beq 
	\caD\equiv\caD_{1},\quad  v\equiv v_{1}, \AND \rho_{\eta}\equiv\rho_{1,\eta}.
	\eeq
	We write $\bsa+\bsx$ in place of $\bsa_{t}$ to avoid confusion.
	\begin{lem}\label{lem:v_asymp}
		For each $z_{0}\in\partial\caD$, there exists $\delta>0$ such that the following holds uniformly over $(z,\eta)\in D(z_{0},\delta)\times[0,\delta]$; (recall the relation $\sim_{i}$ from Notational Remark \ref{nrem:asymp})
		\beq\label{eq:v_asymp}
		v(z,\eta)\sim_{(z_{0},\delta)}\begin{cases}
			\dfrac{\eta}{1-f_{\bsa}(z)+\eta^{2/3}}, & f_{\bsa}(z)\leq 1, \\
			\sqrt{f_{\bsa}(z)-1}+\eta^{1/3}, & f_{\bsa}(z)>1.
		\end{cases}
		\eeq
		Furthermore, for $\eta\equiv0$ and as $z\to z_{0}$, we have the more precise asymptotics
		\beq\label{eq:v0_asymp}
		v(z,0)=\begin{cases}
			0, & f_{\bsa}(z)\leq 1,\\
			\Brkt{\dfrac{1}{\absv{\bsa-z}^{4}}}^{-1/2}\sqrt{f_{\bsa}(z)-1}+O(f_{\bsa}(z)-1), & f_{\bsa}(z)>1.
		\end{cases}
		\eeq
	\end{lem}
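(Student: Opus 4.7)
The starting point is the scalar Dyson equation \eqref{eq:vv} with $t=1$; whenever $v>0$ (which holds automatically for $\eta>0$ and also when $\eta=0,z\in\caD$ by Proposition \ref{prop:subor_cont}(iii)), I rewrite it as
\[
v\bigl(1-\brkt{(\absv{\bsa-z}^{2}+v^{2})^{-1}}\bigr)=\eta.
\]
By Assumption \ref{assump} and Remark \ref{rem:assump_conseq}, $\partial\caD$ is disjoint from $\spec(\bsa)$, so I may fix $\delta_{0}>0$ with $\norm{(\bsa-z)^{-1}}$ uniformly bounded on $D(z_{0},\delta_{0})$; hence $\absv{\bsa-z}^{2}\geq c_{0}>0$ there. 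For $v^{2}<c_{0}$ the resolvent expansion in operator norm, followed by the trace, yields
\[
\Brkt{\dfrac{1}{\absv{\bsa-z}^{2}+v^{2}}}=f_{\bsa}(z)-v^{2}F_{2}(z)+O(v^{4}),\qquad F_{k}(z)\deq\brkt{\absv{\bsa-z}^{-2k}},
\]
with error uniform on $D(z_{0},\delta_{0})$. Substituting this into the rearranged Dyson equation and setting $\alpha(z)\deq 1-f_{\bsa}(z)$, the equation reduces to
\[
\alpha(z)\,v+F_{2}(z)\,v^{3}+O(v^{5})=\eta,
\]
where $\alpha(z_{0})=0$ and the $F_{k}$ are continuous, positive, and bounded on $D(z_{0},\delta_{0})$.

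Using $v(z_{0},0)=0$ (Proposition \ref{prop:subor_cont}(ii)--(iii)) and continuity of $v$, I next choose $\delta\in(0,\delta_{0}]$ so small that on $D(z_{0},\delta)\times[0,\delta]$ the value $v(z,\eta)$ is small enough that the $O(v^{5})$ remainder is dominated by $F_{2}(z)v^{3}/10$, and $F_{2}(z)$ stays within a factor $2$ of $F_{2}(z_{0})$. This reduces the analysis to the clean balance
\[
\alpha(z)\,v+F_{2}(z)\,v^{3}\sim_{(z_{0},\delta)}\eta.
\]
The asymptotics then follow by elementary arguments splitting on the sign of $\alpha$. If $\alpha(z)\geq 0$, both terms on the left are non-negative, so upper bounds come from considering each term individually and the matching lower bound from the fact that the larger of the two is $\gtrsim\eta$; this gives $v\sim\min(\eta/\alpha,\,\eta^{1/3})\sim\eta/(\alpha+\eta^{2/3})$, the first line of \eqref{eq:v_asymp}. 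If $\alpha(z)<0$, I rewrite the balance as $F_{2}(z)v^{3}\sim\eta+\absv{\alpha(z)}v$ and apply the same max-argument to deduce $v\sim\max(\eta^{1/3},\sqrt{\absv{\alpha(z)}})\sim\eta^{1/3}+\sqrt{f_{\bsa}(z)-1}$, the second line.

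For the refined expansion \eqref{eq:v0_asymp}, set $\eta=0$. If $f_{\bsa}(z)\leq 1$ then $\alpha(z)\geq 0$, so both $\alpha v$ and $F_{2}v^{3}$ are non-negative and their sum equals the negligible error, forcing $v(z,0)=0$. If $f_{\bsa}(z)>1$, then $v(z,0)>0$ by Proposition \ref{prop:subor_cont}(iii), so I divide the reduced equation by $v$ to obtain
\[
-\absv{\alpha(z)}+F_{2}(z)\,v^{2}+O(v^{4})=0.
\]
Combined with the bound $v^{2}=O(\absv{\alpha(z)})$ just established, the error is $O(\alpha(z)^{2})$, hence $v^{2}=\absv{\alpha(z)}/F_{2}(z)+O(\alpha(z)^{2})$; taking the square root yields $v=F_{2}(z)^{-1/2}\sqrt{f_{\bsa}(z)-1}+O(f_{\bsa}(z)-1)$, which is exactly \eqref{eq:v0_asymp} since $F_{2}(z)=\brkt{\absv{\bsa-z}^{-4}}$.

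The chief technical obstacle is uniformity in $(z,\eta)$ throughout the argument: ensuring $v$ remains small enough to absorb the $O(v^{5})$ term into the cubic, and that $F_{2}(z)$ does not degenerate. Both rely on $\partial\caD\cap\spec(\bsa)=\emptyset$, which is precisely the content of Assumption \ref{assump} via Remark \ref{rem:assump_conseq}. Without it, $F_{2}(z)$ could blow up as $z\to z_{0}$ and the cubic approximation would fail — this is the same mechanism exploited in the counterexample of Theorem \ref{thm:higher}.
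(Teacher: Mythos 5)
Your proof is correct and follows the same overall strategy as the paper: rearrange the Dyson equation \eqref{eq:vv} into a linear-plus-cubic balance in $v$, then carry out a two-sided estimate by cases on the sign of $1-f_{\bsa}(z)$. The one technical difference is that you Taylor-expand $\brkt{(\absv{\bsa-z}^{2}+v^{2})^{-1}}$ in $v$ and absorb the $O(v^{5})$ remainder into the cubic term, which forces an extra continuity/compactness argument to guarantee that $v(z,\eta)$ is \emph{uniformly small} on $D(z_{0},\delta)\times[0,\delta]$. The paper avoids this by using the exact identity
\beq
\Brkt{\frac{1}{\absv{\bsa-z}^{2}+v^{2}}}-f_{\bsa}(z)=-\Brkt{\frac{v^{2}}{\absv{\bsa-z}^{2}(\absv{\bsa-z}^{2}+v^{2})}},
\eeq
which turns the Dyson equation into $\eta=v(1-f_{\bsa}(z))+v^{3}\brkt{\absv{\bsa-z}^{-2}(\absv{\bsa-z}^{2}+v^{2})^{-1}}$ with no remainder; the cubic coefficient is then trapped between $\brkt{\absv{\bsa-z}^{-2}(\absv{\bsa-z}^{2}+1)^{-1}}$ and $\brkt{\absv{\bsa-z}^{-4}}$ using only the crude a priori bound $v\leq1$ from \eqref{eq:v_bdd}, so no genuine smallness of $v$ is required for \eqref{eq:v_asymp}. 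This is a minor economy; both routes are sound, and the uniformity discussion you include is precisely what makes the Taylor variant rigorous. (For the refined expansion \eqref{eq:v0_asymp} the paper also Taylor-expands the cubic coefficient, so there the two proofs coincide.)
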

	\begin{proof}
		First of all, recall $\partial\caD\subset\C\setminus\caD\subset\C\setminus\spec(\bsa)$, which implies that there exists $\delta>0$ such that $z\mapsto (\bsa-z)^{-1}$ is analytic and norm-bounded on the disk $D(z_{0},\delta)$. Note that we have $v(z,\eta)\leq 1$ for all $z\in\C$ and $\eta>0$ due to \eqref{eq:v_bdd}. For all $v\in\R$ and $z$ in the disk $\{z:\absv{z-z_{0}}<\delta\}$, we have
		\beq\label{eq:f}
		\Brkt{\frac{1}{\absv{\bsa-z}^{2}+v^{2}}}-\Brkt{\frac{1}{\absv{\bsa-z}^{2}}}
		=-\Brkt{\frac{v^{2}}{\absv{\bsa-z}^{2}(\absv{\bsa-z}^{2}+v^{2})}}.
		\eeq
		Recall also that $v\equiv v(z,\eta)$ satisfies \eqref{eq:vv}; we always take $v=v(z,\eta)$ with $(z,\eta)\in D(z_{0},\delta)\times [0,\delta]$ in the rest of the proof. Combining \eqref{eq:vv} with \eqref{eq:f} gives
		\beq\label{eq:f_asymp}\begin{aligned}
			\eta=&v-\Brkt{\dfrac{v}{\absv{\bsa-z}^{2}+v^{2}}}	
			=v\left(1-f_{\bsa}(z)+v^{2}\Brkt{\frac{1}{\absv{\bsa-z}^{2}(\absv{\bsa-z}^{2}+v^{2})}}\right).
		\end{aligned}\eeq
		
		We first consider the regime $f_{\bsa}(z)\leq 1$. In this case, we have from \eqref{eq:f_asymp} and $v\leq1$ that
		\beqs
		\eta=v(1-f_{\bsa}(z))+\Brkt{\frac{v^{3}}{\absv{\bsa-z}^{2}(\absv{\bsa-z}^{2}+v^{2})}}\geq v(1-f_{\bsa}(z))\vee v^{3}\Brkt{\frac{1}{\absv{\bsa-z}^{2}(\absv{\bsa-z}^{2}+1)}},
		\eeqs
		which immediately implies 
		\beq\label{eq:out_up}
		v\lesssim_{(z_{0},\delta)}\frac{\eta}{1-f_{\bsa}(z)}\wedge \eta^{1/3}\sim \frac{\eta}{1-f_{\bsa}(z)+\eta^{2/3}}.
		\eeq
		Conversely, we have
		\beq\label{eq:out_lo_1}
		\eta=v(1-f_{\bsa}(z))+\Brkt{\frac{v^{3}}{\absv{\bsa-z}^{2}(\absv{\bsa-z}^{2}+v^{2})}}\leq v(1-f_{\bsa}(z))+v^{3}\Brkt{\frac{1}{\absv{\bsa-z}^{4}}}.
		\eeq
		Plugging $v\lesssim \eta^{1/3}$ from \eqref{eq:out_up} into \eqref{eq:out_lo_1} and using $\norm{(\bsa-z)^{-1}}\lesssim_{(z_{0},\delta)}1$ proves
		\beq\label{eq:out_lo}
		\eta\leq v(1-f_{\bsa}(z))+v^{3}\Brkt{\frac{1}{\absv{\bsa-z}^{4}}}\lesssim_{(z_{0},\delta)} v(1-f_{\bsa}(z)+\eta^{2/3}),
		\eeq
		which is the complementing lower bound for \eqref{eq:out_up}. This proves the first line of \eqref{eq:v_asymp}.
		
		Next, we consider $f_{\bsa}(z)>1$. Here we first establish the lower bound; we use \eqref{eq:f_asymp} to write
		\beq\label{eq:in_lo_1}
		v^{3}\Brkt{\frac{1}{\absv{\bsa-z}^{4}}}\geq v^{3}\Brkt{\frac{1}{\absv{\bsa-z}^{2}(\absv{\bsa-z}^{2}+v^{2})}}=\eta +v(f_{\bsa}(z)-1),
		\eeq
		and using each term on the rightmost side of \eqref{eq:in_lo_1} proves
		\beq\label{eq:in_lo}
		v\gtrsim_{(z_{0},\delta)} \eta^{1/3}+\sqrt{f_{\bsa}(z)-1}.
		\eeq
		Conversely, using \eqref{eq:in_lo} as an input, we have
		\begin{multline}
			v^{2}\Brkt{\frac{1}{\absv{\bsa-z}^{2}(\absv{\bsa-z}^{2}+1)}}\leq v^{2}\Brkt{\frac{1}{\absv{\bsa-z}^{2}(\absv{\bsa-z}^{2}+v^{2})}}	\\
			=\frac{\eta}{v}+f_{\bsa}(z)-1\lesssim_{(z_{0},\delta)} \eta^{2/3}+f_{\bsa}(z)-1,
		\end{multline}
		which proves the upper bound complementing \eqref{eq:in_lo}. This completes the proof of \eqref{eq:v_asymp}.
		
		Finally we prove the last assertion \eqref{eq:v0_asymp}. For $f_{\bsa}(z)\leq 1$ the result immediately follows from \eqref{eq:v_asymp} by taking the limit $\eta\to0$. For $f_{\bsa}(z)>1$, the defining equation \eqref{eq:vv} for $v=v(z,0)$ reduces to
		\beq
		0=1-f_{\bsa}(z)+\Brkt{\frac{v^{2}}{\absv{\bsa-z}^{2}(\absv{\bsa-z}^{2}+v^{2}}}=1-f_{\bsa}(z)+v^{2}\Brkt{\frac{1}{\absv{\bsa-z}^{4}}}+O(v^{4}),
		\eeq
		where we used that $v(z,0)>0$. Recalling from \eqref{eq:v_asymp} that $v\sim\sqrt{f_{\bsa}(z)-1}$, we have
		\beq
		v=\Brkt{\frac{1}{\absv{\bsa-z}^{4}}}^{-1/2}\sqrt{f_{\bsa}(z)-1}+O(v^{2})=\Brkt{\frac{1}{\absv{\bsa-z}^{4}}}^{-1/2}\sqrt{f_{\bsa}(z)-1}+O(f_{\bsa}(z)-1),
		\eeq
		completing the proof of Lemma \ref{lem:v_asymp}.
	\end{proof}
	\begin{rem}\label{rem:local_v}
		As in Remark \ref{rem:sharp_local}, exactly the same proof applies without Assumption \ref{assump} provided $z_{0}\in\partial\caD$ and $z_{0}\notin\spec(\bsa)$. We can also make it quantitative so that the implicit constant in \eqref{eq:v_asymp} can be chosen uniformly over $\bsa$ and $z$ satisfying $\norm{\bsa}\leq C$ and $\norm{(\bsa-z)^{-1}}\leq C$, for each fixed constant $C>0$.
	\end{rem}
	
	
	\begin{proof}[Proof of Theorem \ref{thm:sharp}]
		The first part (i) immediately follows from Lemma \ref{lem:dens}. For the rest of the assertions (ii) -- (iv), we recall from \eqref{eq:dens_bulk} the definition of $\rho$ (for $t=1$) in $\caD$;
		\beq\begin{aligned}\label{eq:rho_in}
			\pi\rho(w)=&v(w,0)^{2}\Brkt{\frac{1}{\absv{\bsa-w}^{2}+v(w,0)^{2}}\frac{1}{\absv{\bsa-w}_{*}^{2}+v(w,0)^{2}}}	\\
			&+\Brkt{\frac{1}{(\absv{\bsa-w}^{2}+v(w,0)^{2})^{2}}}^{-1}\Absv{\Brkt{\frac{1}{(\absv{\bsa-w}^{2}+v(w,0)^{2})^{2}}(\bsa-w)}}^{2}.
		\end{aligned}\eeq
		We next prove part (ii). Recall that $v(\cdot,0)$ is continuous and strictly positive in the open domain $\caD$. Thus for a given $\delta>0$ we have a constant $c>0$ such that
		\beq\label{eq:v_bulk}
		c<v(z,0),\qquad \forall z\in\{z\in\caD:\dist(z,\partial\caD)\geq\delta\},
		\eeq
		where we used that the domain in \eqref{eq:v_bulk} is a compact subset of $\caD$. Indeed, the set $\caD$ (hence $\partial\caD$) is bounded by $\norm{\bsa}+1$, since $\absv{z}>\norm{\bsa}+1$ implies 
		\beq\label{eq:z_bdd}
		f_{\bsa}(z)\leq \norm{(\bsa-z)^{-1}}^{2}\leq \frac{1}{(\absv{z}-\norm{\bsa})^{2}}\leq 1.
		\eeq
		Therefore the first term in \eqref{eq:rho_in} is bounded from below by another constant $c'>0$ depending only on $\bsa$ (and implicitly on $t>0$) as
		\beq\begin{aligned}
			&v(z,0)^{2}\Brkt{\frac{1}{\absv{\bsa-z}^{2}+v(z,0)^{2}}\frac{1}{\absv{\bsa-z}_{*}^{2}+v(z,0)^{2}}}
			\geq c^{2}\frac{1}{(\norm{\bsa-z}^{2}+1)^{2}}\geq c',
		\end{aligned}\eeq
		where we used $v(z,0)\leq 1$ from \eqref{eq:v_bdd}. This completes the proof of (ii).
		
		For (iii) and (iv), we fix a point $z\in\partial\caD$ and (formally) consider the right-hand side of \eqref{eq:rho_in} as a function of two independent variables $w$ and $v\equiv v(w,0)$. Then we expand it with respect to $v$ at $0$, using that $w\mapsto (\bsa-w)^{-1}$ is analytic around $z$. More precisely, we have
		\begin{align}\label{eq:rho_edge_expa}
			\pi\rho(w)
			=&v^{2}\Brkt{\frac{1}{\absv{\bsa-w}^{2}\absv{\bsa-w}_{*}^{2}}}	\\
			&+\left(\Brkt{\absv{\bsa-w}^{-4}}-2v^{2}\brkt{\absv{\bsa-w}^{-6}}\right)^{-1}\Absv{\partial_{\ol{w}}f_{\bsa}(w)-2v^{2}\partial_{\ol{w}}\Brkt{\absv{\bsa-w}^{-4}}}^{2}+O(v^{4}).\nonumber
		\end{align}
		
		Next, for (iii), we apply the following estimates to \eqref{eq:rho_edge_expa}: Uniformly over $\absv{w-z}<\delta$, we have that
		\begin{align}
			\Norm{\frac{1}{\bsa-w}-\frac{1}{\bsa-z}}\lesssim_{(z,\delta)}&\absv{w-z}, \\
			\absv{\partial_{\ol{z}}f_{\bsa}(w)-\partial_{\ol{z}}f_{\bsa}(z)}\lesssim_{(z,\delta)}&\absv{w-z}, \\
			\absv{f_{\bsa}(w)-1}=\absv{f_{\bsa}(w)-f_{\bsa}(z)}\lesssim_{(z,\delta)}&\absv{w-z}, \label{eq:v_edge_sharp0}\\
			v(w,0)\lesssim_{(z,\delta)} \sqrt{(f_{\bsa}(w)-1)_{+}}\lesssim_{(z,\delta)}&\sqrt{\absv{w-z}},\label{eq:v_edge_sharp}
		\end{align}
		where we used Lemma \ref{lem:v_asymp} in \eqref{eq:v_edge_sharp}. As a result, we obtain from \eqref{eq:rho_edge_expa} that (in fact, here we only need \eqref{eq:rho_edge_expa} up to an $O(v^{2})$ error)
		\beq\label{eq:edge_sharp_0}
		\rho(w)=\frac{1}{\pi}\brkt{\absv{\bsa-z}^{-4}}^{-1}\absv{\partial_{\ol{z}}f_{\bsa}(z)}^{2}+O(\absv{w-z}),
		\eeq
		which proves \eqref{eq:edge_sharp}. Since $\absv{\partial_{\ol{z}}f_{\bsa}(z)}=\absv{\brkt{\absv{\bsa-z}^{-4}(\bsa-z)}}\neq0$ away from $\caC$ and \eqref{eq:edge_sharp_0} is valid for $w\in\caD$, combining with Lemma \ref{lem:dens} (ii) leads\footnote{We define $\rho(z)$ by the same formula as in \eqref{eq:dens_bulk} for $z\in\partial\caD$, which is of Lebesgue measure zero under Assumption \ref{assump}; see Remark \ref{rem:assump_conseq}.} to \eqref{eq:edge_sharp_1}. This completes the proof of part (iii).
		
		Finally we prove the higher order expansion (iv). In this case we have $\nabla f_{\bsa}(z)=0$, so that the function $f_{\bsa}$ is locally quadratic around $z$. Consequently the estimates \eqref{eq:v_edge_sharp0} and \eqref{eq:v_edge_sharp} can be improved to
		\beq\begin{aligned}
			\absv{f_{\bsa}(w)-1}\lesssim_{(z,\delta)}& \absv{w-z}^{2}, \\
			v(w,0)-\brkt{\absv{\bsa-z}^{-4}}^{-1/2}\sqrt{(f_{\bsa}(w)-1)_{+}}\lesssim_{(z,\delta)}&\absv{w-z}^{2}.
		\end{aligned}\eeq
		Plugging these into \eqref{eq:rho_edge_expa}, we have
		\beq\begin{aligned}\label{eq:rho_edge_quad}
			\pi\rho(w)=&\brkt{\absv{\bsa-z}^{-4}}^{-1}\brkt{\absv{\bsa-w}^{-2}\absv{\bsa-w}_{*}^{-2}}(f_{\bsa}(w)-1)_{+}\\
			&+(\brkt{\absv{\bsa-w}^{-4}}+O(\absv{w-z}^{2}))^{-1}\left(\absv{\partial_{\ol{z}}f_{\bsa}(w)}+O(\absv{w-z}^{2})\right)^{2}+O(\absv{w-z}^{4})	\\
			=&\brkt{\absv{\bsa-z}^{-4}}^{-1}\brkt{\absv{\bsa-z}^{-2}\absv{\bsa-z}_{*}^{-2}}(f_{\bsa}(w)-1)_{+}	\\
			&+\brkt{\absv{\bsa-z}^{-4}}^{-1}\absv{\partial_{\ol{z}}f_{\bsa}(w)}^{2}+O(\absv{w-z}^{3}),
		\end{aligned}\eeq
		and the asymptotics \eqref{eq:edge_quad} in (iv) follows from substituting Taylor expansions of $f_{\bsa}$ and $\partial_{\ol{z}}f_{\bsa}$ into \eqref{eq:rho_edge_quad}. 
		
		The upper bound in the inequality \eqref{eq:edge_quad_1} follows immediately from the expansion \eqref{eq:edge_quad} since all prefactors in \eqref{eq:edge_quad} and $\norm{H[f_{\bsa}](z)}$ are bounded as $z\notin\spec(\bsa)$. For the lower bound, recall $\Delta f_{\bsa}(z)=\Tr H[f_{\bsa}](z)=4\brkt{\absv{(\bsa-z)^{-2}}^{2}}>0$ from \eqref{eq:subhar}, 
		so that the matrix $H[f_{\bsa}](z)\in\R^{2\times 2}$ has at least one positive eigenvalue. We take $\lambda\neq 0$ to be the smallest (in modulus) nonzero eigenvalue of $H[f_{\bsa}](z)$. Recalling that $P_{z}$ is the orthogonal projection onto the null space of $H[f_{\bsa}](z)$, whenever $(w-z)\in\caS(z,\kappa)$ we have
		\beq\label{eq:edge_quad_lo_1}
		\Norm{H[f_{\bsa}](z)\begin{pmatrix}\re[w-z]\\ \im[w-z]\end{pmatrix}}
		\geq \absv{\lambda}\Norm{(I-P_{z})\begin{pmatrix}\re[w-z]\\ \im[w-z]\end{pmatrix}}
		\geq \absv{\lambda}\absv{w-z}.
		\eeq
		Substituting \eqref{eq:edge_quad_lo_1} into the asymptotics \eqref{eq:edge_quad} proves that
		\beq
		\rho(w)\geq c\absv{w-z}^{2}+O(\absv{w-z}^{3}),
		\eeq
		which gives lower bound in \eqref{eq:edge_quad_1}. This completes the proof of Theorem \ref{thm:sharp}.
	\end{proof}
	We conclude this section with a side result about the limit of the regularized Brown measure exactly at the edge,  complementing the similar result in the bulk \eqref{eq:dens_bulk}-\eqref{eq:bulk}: 
	\begin{lem}\label{lem:dens1}
		If $z\in\partial\caD$, we have 
		\begin{equation}\label{eq:dens_sharp_edge}
			\lim_{\eta\to0}\rho_{\eta}(z)=\frac{2}{3}\Brkt{\frac{1}{\absv{\bsa-z}^{4}}}^{-1}\Absv{\Brkt{\frac{1}{\absv{\bsa-z}^{4}}(\bsa-z)}}^{2}.
		\end{equation}
	\end{lem}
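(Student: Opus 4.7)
The plan is to apply the explicit formula \eqref{eq:Delta_log_conc} for $\pi\rho_\eta(z)$ and carefully pass to the limit $\eta \to 0$, exploiting the boundary condition $f_{\bsa}(z) = 1$ and the asymptotics of $v(z,\eta)$ from Lemma \ref{lem:v_asymp}. Since $z \in \partial\caD$, Remark \ref{rem:assump_conseq} gives $f_{\bsa}(z) = 1$, and Assumption \ref{assump} ensures $z \notin \spec(\bsa)$, so $(\bsa-z)^{-1}$ is norm-bounded. Specializing Lemma \ref{lem:v_asymp} to $f_{\bsa}(z) = 1$ yields $v(z,\eta) \sim \eta^{1/3}$; in particular, $v \to 0$ as $\eta \to 0$.

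With this in hand, the first term in \eqref{eq:Delta_log_conc} is dominated by $v^{2}\norm{(\bsa-z)^{-1}}^{4}$ and therefore vanishes in the limit. For the second term, the essential step is to identify the limit of $\eta/v^{3}$, which naively looks indeterminate. Substituting $f_{\bsa}(z) = 1$ into the identity \eqref{eq:f_asymp} eliminates the leading linear term in $v$ and produces the exact relation
\beq\label{eq:etav3plan}
\eta \;=\; v^{3} \Brkt{\frac{1}{\absv{\bsa-z}^{2}(\absv{\bsa-z}^{2}+v^{2})}},
\eeq
so that $\eta/v^{3} \to \brkt{\absv{\bsa-z}^{-4}}$ as $v \to 0$.

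Substituting this, together with the trivial continuity statements $\brkt{(\absv{\bsa-z}^{2}+v^{2})^{-2}} \to \brkt{\absv{\bsa-z}^{-4}}$ and $\brkt{(\absv{\bsa-z}^{2}+v^{2})^{-2}(\bsa-z)} \to \brkt{\absv{\bsa-z}^{-4}(\bsa-z)}$, the prefactor in \eqref{eq:Delta_log_conc} becomes
\beqs
\Bigl(\tfrac{1}{2}\brkt{\absv{\bsa-z}^{-4}} + \brkt{\absv{\bsa-z}^{-4}}\Bigr)^{-1} \;=\; \tfrac{2}{3}\brkt{\absv{\bsa-z}^{-4}}^{-1},
\eeqs
which directly produces the coefficient $2/3$ in the claimed formula after multiplication by $\absv{\brkt{\absv{\bsa-z}^{-4}(\bsa-z)}}^{2}$.

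The only subtle step is the identification \eqref{eq:etav3plan} and the resulting nontrivial limit $\eta/v^{3} \to \brkt{\absv{\bsa-z}^{-4}}$ precisely on $\partial\caD$; this is what distinguishes the edge formula from the bulk formula \eqref{eq:dens_bulk} and accounts for the prefactor $2/3$ rather than $1$. Every other step is plain continuity in $v$ supported by $z \notin \spec(\bsa)$, so the overall argument is short once the correct expansion of \eqref{eq:vv} at $f_{\bsa}(z)=1$ is isolated.
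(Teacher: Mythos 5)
Your proof is correct and matches the paper's argument essentially step for step: both isolate the limit of $\eta/v^{3}$ by substituting the boundary condition $f_{\bsa}(z)=1$ into \eqref{eq:f_asymp}, both use $z\notin\spec(\bsa)$ (from Assumption \ref{assump}) and $v(z,\eta)\to0$ to kill the first term of \eqref{eq:Delta_log_conc} and pass to the limit in the remaining expectations by norm continuity of $(\bsa-z)^{-1}$. One minor bookkeeping note shared by your writeup and the stated lemma: \eqref{eq:Delta_log_conc} gives $\pi\rho_{\eta}$, so the limit computed is that of $\pi\rho_{\eta}(z)$, consistent with the factor $2/(3\pi)$ appearing later in \eqref{eq:rho_unstable}.
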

	\begin{proof}
		%
		We temporarily abbreviate $v(\eta)\equiv v(z,\eta)$ along the proof since we consider fixed $z\in\partial\caD$. By \eqref{eq:Delta_log_conc} and \eqref{eq:v0_asymp}, it suffices to prove that
		\begin{multline}\label{eq:sharp_out_2}
			\lim_{\eta\to 0}\left(\frac{\eta}{2v(\eta)^{3}}+\Brkt{\dfrac{1}{(\absv{\bsa-z}^{2}+v(\eta)^{2})^{2}}}\right)^{-1}\Absv{\Brkt{\dfrac{1}{(\absv{\bsa-z}^{2}+v(\eta)^{2})^{2}}(\bsa-z)}}^{2}	\\
			=\frac{2}{3}\Brkt{\dfrac{1}{\absv{\bsa-z}^{4}}}^{-1}\Absv{\Brkt{\dfrac{1}{\absv{\bsa-z}^{4}}(\bsa-z)}}^{2}.
		\end{multline}
		Recall from Assumption \ref{assump} that $z\notin{\caD}$ implies $z\notin \spec(\bsa)$, hence the operator $(\bsa-z)^{-1}$ is bounded. Note also that $z\in \C\setminus\caD$ implies $v(0)=0$ by Proposition \ref{prop:subor_cont}.
		
		For the first term in the denominator of \eqref{eq:sharp_out_2}, we use \eqref{eq:f_asymp} and $v(\eta)\to0$ as $\eta\to0$ to obtain 
		\beq\label{eq:sharp_out_crit}\begin{aligned}
			\frac{\eta}{v(\eta)^{3}}=&\frac{1-f_{\bsa}(z)}{v(\eta)^{2}}+\Brkt{\dfrac{1}{\absv{\bsa-z}^{2}(\absv{\bsa-z}^{2}+v(\eta)^{2})}}
			\to
			\brkt{\absv{\bsa-z}^{-4}},
		\end{aligned}\eeq
		where we used that $(\bsa-z)^{-1}$ is bounded when $f_{\bsa}(z)=1$. For the remaining quantities in \eqref{eq:sharp_out_2}, we have the norm (in $\caM$) convergences
		\beq\label{eq:sharp_out_crit_1}
		\lim_{\eta\to0}\frac{1}{(\absv{\bsa-z}^{2}+v(\eta)^{2})^{2}}
		=\frac{1}{\absv{\bsa-z}^{4}},\qquad 
		\lim_{\eta\to0}\dfrac{1}{(\absv{\bsa-z}^{2}+v(\eta)^{2})^{2}}(\bsa-z)=\frac{1}{\absv{\bsa-z}^{4}}(\bsa-z).
		\eeq
		Finally, combining \eqref{eq:bd=level}, \eqref{eq:sharp_out_crit}, and \eqref{eq:sharp_out_crit_1} proves \eqref{eq:sharp_out_2}. This completes the proof of Lemma \ref{lem:dens}.
	\end{proof}
	
	\begin{rem}
		Along the proof of Theorem \ref{thm:sharp}, in \eqref{eq:edge_sharp_0} and \eqref{eq:dens_sharp_edge}, we have proved that for $z\in\partial\caD\setminus\caC$
		\beq\begin{aligned}\label{eq:rho_unstable}
			\lim_{\eta\to0}\lim_{w\to z}\rho_{\eta}(w)=&\frac{2}{3\pi}\Brkt{\frac{1}{\absv{\bsa-z}^{4}}}^{-1}\Absv{\Brkt{\frac{1}{\absv{\bsa-z}^{4}}(\bsa-z)}}^{2},	\\
			\lim_{w\to z,w\in\caD}\lim_{\eta\to0}\rho_{\eta}(w)=&\frac{1}{\pi}\Brkt{\frac{1}{\absv{\bsa-z}^{4}}}^{-1}\Absv{\Brkt{\frac{1}{\absv{\bsa-z}^{4}}(\bsa-z)}}^{2}.
		\end{aligned}\eeq
		Notice the discrepancy between the two limits in \eqref{eq:rho_unstable}; the former is strictly smaller than the latter by a factor of $2/3$. This shows that the smoothed density $\rho_{\eta}$ is highly unstable around the edge, in the sense that $\lim_{(w,\eta)\to(z,0)}\rho_{\eta}(w)$ depends on the direction $(w,\eta)$ approaches $(z,0)$. Upon close inspection of the proof, we find that the limit \eqref{eq:sharp_out_crit} is responsible for this instability, that is, the limit
		\beq
		\lim_{(w,\eta)\to(z,0)}\frac{\eta}{v(w,\eta)^{3}}
		\eeq
		depends (if exists) on the limit direction; see \eqref{eq:v_asymp} for an explanation. Nonetheless, plugging in 
		\beq
		\liminf_{(w,\eta)\to(z,0)}\frac{\eta}{v(w,\eta)^{3}}\geq 0
		\eeq
		to the definition of $\rho_{\eta}(w)$ proves that the limit along $\eta=0$ is the largest, that is,
		\beq
		\limsup_{(w,\eta)\to(z,0)}\rho_{\eta}(w)=\frac{1}{\pi}\brkt{\absv{\bsa-z}^{-4}}^{-1}\absv{\partial_{\ol{z}}f_{\bsa}(z)}^{2}.
		\eeq
	\end{rem}
	
	\subsection*{Acknowledgments}
	The authors would like to thank Ping Zhong for pointing out further references \cite{Bercovici-Wang-Zhong2023,Bordenave-Capitaine2016} and providing helpful comments.
	
	\appendix

	\section{Irregular edges}\label{append:irreg}
	
	\subsection{Proof of Theorem \ref{thm:higher}}\label{append:higher}
	\begin{proof}[Proof of Theorem \ref{thm:higher}]
		Since the proof is mostly computational we only show the main steps. Denote $z=E+\ii E'\in[0,1]^{2}$, where $z$ is in the domain in Theorem \ref{thm:higher}, i.e.
		\beq
		cE\leq E'\leq c^{-1}E
		\eeq
		for some constant $c>0$. For another suitable constant $C>1$, we divide the support $[0,1]^{2}$ of $\rho_{\bse}$ into $\cup_{j=1}^{4}I_{j}$, where
		\beq\label{eq:Iij}\begin{aligned}
			I_{1}\deq &[0,CE]\times [0,C^{-1}E']\cup [0,C^{-1}E]\times [0,CE'], \\
			I_{2}\deq &[C^{-1}E,CE]\times [C^{-1}E',CE'],	\\
			I_{3}\deq &[0,CE]\times [CE',1]\cup [CE,1]\times [0,CE']=:I_{31}\cup I_{32}, \\
			I_{4}\deq &[CE,1]\times [CE',1].
		\end{aligned}\eeq
		All asymptotic notations below depend on the choice of $c,C,t$, and we omit the subscript $(c,C,t)$ for brevity.
		
		In the first step, we show that $v\equiv v_{t}(z)$, the solution of
		\beq\label{eq:v_irreg}
		(p+1)(q+1)\int_{0}^{1}\int_{0}^{1}\frac{x^{p}y^{q}}{(x-E)^{2}+(y-E')^{2}+v^{2}}\dd x\dd y=\frac{1}{t},
		\eeq
		is exponentially small in $E$, more precisely that
		\beq\label{eq:v_exp}
		-\log v\sim E^{-(p+q)}.
		\eeq
		The proof of \eqref{eq:v_exp} follows from evaluating the integral in \eqref{eq:v_irreg} over each domain $I_{1},\cdots I_{4}$. We easily find that the integral over $I_{2}$ dominates others, which is computed as 
		\beq\begin{aligned}\label{eq:I22}
			\int_{I_{2}}\frac{x^{p}y^{q}}{(x-E)^{2}+(y-E')^{2}+v^{2}}\dd x \dd y\sim E^{p+q}\int_{[-1+c,C-1]^{2}} \frac{1}{\wt{x}^{2}+\wt{y}^{2}+(v/E)^{2}}\dd\wt{x}\dd\wt{y}	\\
			\sim E^{p+q}\int_{\D}\frac{1}{\absv{z}^{2}+(v/E)^{2}}\dd^{2}z\sim E^{p+q}\log\left(\frac{v^{2}}{E^{2}}\right),
		\end{aligned}\eeq
		where we used the change of variables $E\wt{x}=x-E$ and $E'\wt{y}=y-E'$. Since we fixed $t>0$, \eqref{eq:v_exp} immediately follows.
		
		Now we apply \eqref{eq:v_exp} to each term in the formula \eqref{eq:dens_bulk} for the density $\rho$ of $\rho_{\bse+\sqrt{t}\bsx}$. For the first term therein, we write
		\beq\label{eq:dens_irreg_1}
		\Brkt{\frac{1}{\absv{\bse-z}^{2}+v^{2}}\frac{1}{\absv{\bse-z}_{*}^{2}+v^{2}}}=\int_{0}^{1}\int_{0}^{1}\frac{(p+1)(q+1)x^{p}y^{q}}{((x-E)^{2}+(y-E')^{2}+v^{2})^{2}}\dd x\dd y,
		\eeq
		where in the first equality we used that $\bse$, hence $\bse-z$, are normal so that $\absv{\bse-z}=\absv{\bse-z}_{*}$.
		We divide the domain of integration on the right-hand side of \eqref{eq:dens_irreg_1} as in \eqref{eq:Iij}. Again it is easy to see that the integral over $I_{2}$ is dominating, which leads to
		\beq\label{eq:rho_irreg_1}
		v^{2}\Brkt{\frac{1}{\absv{\bse-z}^{2}+v^{2}}\frac{1}{\absv{\bse-z}_{*}^{2}+v^{2}}}\sim E^{p+q}.
		\eeq

		Applying the same argument to the second term in \eqref{eq:dens_bulk}, we find that 
		\begin{multline}
			\Absv{\Brkt{\frac{1}{(\absv{\bse-z}^{2}+v^{2})^{2}}(\bse-z)}}	\\
			\leq(p+1)(q+1)\int_{[0,1]^{2}}\frac{x^{p}y^{q}(\absv{x-E}+\absv{y-E'})}{((x-E)^{2}+(y-E')^{2}+v^{2})^{2}}\dd x\dd y\lesssim E^{p+q}v^{-1},
		\end{multline}
		which, together with \eqref{eq:rho_irreg_1}, gives
		\begin{multline}\label{eq:rho_irreg_2}
			\Brkt{\dfrac{1}{(\absv{\bse-z}^{2}+v^{2})^{2}}}^{-1}\Absv{\Brkt{\dfrac{1}{(\absv{\bse-z}^{2}+v^{2})^{2}}(\bse-z)}}^{2}	\\
			\lesssim (E^{p+q}v^{-2})^{-1}\cdot(E^{p+q}v^{-1})^{2}\sim E^{p+q}.
		\end{multline}
		Plugging in \eqref{eq:rho_irreg_1} and \eqref{eq:rho_irreg_2} to \eqref{eq:dens_bulk}, we conclude
		\beq
		\rho(z)\sim E^{p+q}\sim \absv{z}^{p+q}
		\eeq
		as desired. This concludes the proof of Theorem \ref{thm:higher}.
	\end{proof}
	\subsection{Proof of Proposition \ref{prop:Jacobi}}\label{append:Jacobi}
	In this section, we prove Proposition \ref{prop:Jacobi}. Our proof is inspired by \cite[Section A]{Lee-Schnelli2013}.
	\begin{proof}[Proof of Proposition \ref{prop:Jacobi}]
		We start with the proof of (iii). By Proposition \ref{prop:subor_cont} (ii), we only need to prove that $\caD_{t}\cap \R=(x_{t,-},x_{t,+})$. Clearly $(0,1)\subset\caD_{t}$, since $f_{\bsd}(x)\equiv\infty$ for $x\in(0,1)$. If $x_{t,-}<0$, we also have $(x_{t,-},0]\subset \caD_{t}$ by the definition of  $x_{t,-}$. Applying the same argument to the upper edge, we have $(x_{t,-},x_{t,+})\subset\caD_{t}$. Conversely, if $x<x_{t,-}$ or $x>x_{t,+}$, we immediately have $f_{\bsd}(x)\leq \frac{1}{t}$ by the definition of $x_{t,\pm}$. This proves $\R\setminus (x_{t,-},x_{t,+})\subset \R\setminus\caD_{t}$, completing the proof of (iii).
		
		We next prove (iv), i.e. the solution $v_{t}(\cdot,0)$ of the equation \eqref{eq:vv} satisfies, for some small $\delta>0$, that
		\beq\label{eq:v_higher}
		v=v(x)\equiv v_{t}(x,0)\sim_{(t,\delta)} x^{p_{-}},\qquad x\in[0,\delta].
		\eeq
		Along the proof of \eqref{eq:v_higher} we consider $t<t_{-}$ and $\delta>0$ to be fixed, henceforth omit the subscript $(t,\delta)$ from all asymptotic notations.
		Recall first that $v(0,0)=0$ (since $f_{\bsd}(0)=1/t_{-}<1/t$ by assumption) and that $v(x)$ is continuous, hence we may take $\delta>0$ small enough so that $v(\cdot,0)$ is small in $D(0,\delta)$. Fix a threshold $\epsilon>0$ as
		\beq
		\delta<\epsilon<\frac{1}{100}\wedge \frac{1}{5}\left(1-\frac{t}{t_{-}}\right).
		\eeq
		
		We next show that $v<x/2$. Suppose $v\geq x/2$ on the contrary and rewrite the equation \eqref{eq:vvv} as
		\beq\label{eq:v1}
		\frac{1}{t}=\left(\int_{0}^{\epsilon^{-1}x}+\int_{\epsilon^{-1}x}^{1}\right)\frac{1}{(E-x)^{2}+v^{2}}\rho_{\bsd}(E)\dd E.
		\eeq
		The first integral in \eqref{eq:v1} is estimated as
		\beq\label{eq:v1_1}
		\int_{0}^{\epsilon^{-1}x}\frac{\rho_{\bsd}(E)}{(E-x)^{2}+v^{2}}\dd E
		\leq\int_{0}^{\epsilon^{-1}x}\frac{\rho_{\bsd}(E)}{v^{2}}\dd E
		\lesssim\frac{x^{p_{-}+1}}{v^{2}}\leq x^{p_{-}-1},
		\eeq
		where we used the assumption $v\geq x/2$ in the last inequality. The second integral in \eqref{eq:v1} is bounded from above by
		\beq\label{eq:v1_2}
		\int_{\epsilon^{-1}x}^{1}\frac{\rho_{\bsd}(E)}{(E-x)^{2}}\dd E 
		\leq \int_{\epsilon^{-1}x}^{1}\frac{\rho_{\bsd}(E)}{E^{2}(1-\epsilon)^{2}}\dd E 
		\leq \frac{1}{t_{-}(1-\epsilon)^{2}}\leq\frac{1+3\epsilon}{t_{-}}<\frac{1}{t}-c,
		\eeq
		where the last two inequalities follows from the definition of $\epsilon$. Thus we conclude from \eqref{eq:v1} that
		\beq
		c\leq x^{p_{-}-1},
		\eeq
		which is a contradiction by taking small $\delta>0$ since $p_{-}>1$. This proves $v<x/2$.
		
		Now we show the upper bound in \eqref{eq:v_higher}, i.e. $v\lesssim x^{p_{-}}$. Since $v<x/2$ from the previous paragraph, we have 
		\beqs
		0<x-v<x+v<\epsilon^{-1}x<1
		\eeqs
		Thus we may rewrite \eqref{eq:vvv} as
		\beq\label{eq:v2}
		\frac{v}{t}=\left(\int_{0}^{x-v}+\int_{x-v}^{x+v}+\int_{x+v}^{\epsilon^{-1}x}+\int_{\epsilon^{-1}x}^{1}\right)\frac{v}{(E-x)^{2}+v^{2}}\rho_{\bsd}(E)\dd E	=:(A)+(B)+(C)+(D).
		\eeq
		We estimate each integral in \eqref{eq:v2} as follows;
		\beq
		(A)\lesssim \int_{0}^{x-v}\frac{vE^{p_{-}}}{(E-x)^{2}}\dd E \leq vx^{p_{-}}\int_{0}^{x-v}\frac{1}{(E-x)^{2}}\dd E\leq x^{p_{-}};
		\eeq
		\beq
		(B)\sim \int_{x-v}^{x+v}\frac{1}{v}E^{p_{-}}\dd E\sim x^{p_{-}};
		\eeq
		\beq
		(C)\lesssim \int_{x+v}^{\epsilon^{-1}x}\frac{v}{(E-x)^{2}}E^{p_{-}}\dd E\lesssim x^{p_{-}};
		\eeq
		For the last integral, we use \eqref{eq:v1_2} to get
		\beq
		(D)<\left(\frac{1}{t}-c\right)v.
		\eeq
		Combining all estimates, we find $v\lesssim x^{p_{-}}$ as desired. The lower bound $v\gtrsim x^{p_{-}}$ follows from $(B)\sim x^{p_{-}}$. This completes the proof of (iv). On the other hand Proposition \ref{prop:Jacobi} (v) follows from  Remark \ref{rem:local_v}, since $t>t_{-}$ implies $x_{t,-}<0$, which in turn gives $f_{\bsd}'(x_{t,-})\sim1$ hence $(f_{\bsd}(x)-1)\sim(x-x_{t,-})$.
		
		Finally, we prove (vi), i.e. that the density of $\rho_{\bsd+\sqrt{t}\bsx}$ is bounded from below on $\caD_{t}$. Recalling from part (ii) that the density is constant along vertical segments in $\caD_{t}$, it suffices to show a uniform lower  bound of the density for $z$ in the interval $(x_{t,-},x_{t,+})$. For a small parameter $\delta_{1}>0$, we further divide this interval into
		\beq
		(x_{t,-},x_{t,+})=I_{t,1}\cup I_{t,2}
		\eeq
		where 
		\beq\begin{aligned}
			I_{t,1}\equiv I_{t,1}(\delta_{1})\deq&[x_{t,-}+\delta_{1},x_{t,+}-\delta_{1}]	\\
			I_{t,2}\equiv I_{t,2}(\delta_{1})\deq&(x_{t,-},x_{t,-}+\delta_{1})\cup (x_{t,+}-\delta_{1},x_{t,+}).
		\end{aligned}\eeq
		The first domain $I_{t,1}$ is a compact subset of $\caD_{t}$ so that the density is bounded from below by a constant therein by Theorem \ref{thm:prev} (ii). For the second domain $I_{t,2}$, without loss of generality we focus only on the left portion of $I_{t,2}$, i.e. $[x_{-},x_{-}+\delta_{1})$. If $t>t_{-}$ so that $x_{t,-}<0$, the result again follows from Theorem \ref{thm:sharp} (iii) since $f_{\bsd}'(x_{t,-})\sim1$,
		so that the density is proportional to $\lone_{\ol{\caD}_{t}}$ around $x_{t,-}$. On the other hand if $t<t_{-}$, recall from \eqref{eq:dens_bulk} that the density $\rho(x)$ of $\rho_{\bsd+\sqrt{t}\bsx}$ at $0<x\ll 1$ admits the lower bound
		\beq
		\rho(x)\gtrsim \Brkt{\frac{v^{2}}{\absv{\bsd-x}^{4}+v^{4}}}=\int_{0}^{1}\frac{v^{2}}{\absv{E-x}^{4}+v^{4}}\rho_{\bsd}(E)\dd E.
		\eeq
		Using $v\sim x^{p_{-}}$ from Proposition \ref{prop:Jacobi} (iv) with $p_{-}>1$ (since $t<t_{-}$) and $\rho(E)\sim E^{p_{-}}$ as $E\searrow 0$, we have
		\begin{multline}
			\int_{0}^{1}\frac{v^{2}}{\absv{E-x}^{4}+v^{4}}\rho_{\bsd}(E)
			\gtrsim \int_{0}^{1}\frac{x^{2p_{-}}}{\absv{E-x}^{4}+x^{4p_{-}}}E^{p_{-}}\dd E	\\
			\geq \int_{x-x^{p_{-}}}^{x+x^{p_{-}}}\frac{x^{2p_{-}}}{\absv{E-x}^{4}+x^{4p_{-}}}E^{p_{-}}\dd E\sim 1,
		\end{multline}
		as $x\searrow 0$. By taking $\delta_{1}$ small enough, we have proved that $\rho$ is bounded from below by a constant in $\caD_{t,3}$. This completes the proof of Proposition \ref{prop:Jacobi} (vi).
	\end{proof}
	
	\section{Proof of Theorem \ref{thm:conn}}\label{append:conn}
	
	\begin{proof}[Proof of Theorem \ref{thm:conn}]
		The proof is by contradiction. Take $0<t_{-}<t_{+}$ and assume that the number of components of $\ol{\caD}_{t_{+}}\cup\spec(\bsa)$ is larger than that of $\ol{\caD}_{t_{-}}\cup\spec(\bsa)$. Recall that $\ol{\caD}_{t}\cup\spec(\bsa)$ is closed for each $t\geq0$ and its components are defined with respect to the relative topology inherited from $\C$.
		
		The open set $\caD_{t}$ is bounded for each $t$ since
		\beq
		\absv{z}^{2}\geq 2(\norm{\bsa}^{2}+t)\Longrightarrow \absv{\bsa-z}^{2}\geq t \Longrightarrow z\notin \caD_{t},
		\eeq
		by Cauchy-Schwarz inequality. Thus we may then write the components $C_{i}$ and $D_{j}$ as
		\beq
		\ol{\caD}_{t_{-}}\cup\spec(\bsa)=\bigcup_{i=1}^{k}C_{i},\qquad \ol{\caD}_{t_{+}}\cup\spec(\bsa)=\bigcup_{j=1}^{m}D_{j},\qquad k<m,
		\eeq
		where we allow $m=\infty$ (still $k<\infty$ by hypothesis).
		Note that each $C_{i}$ is closed and open in the relative topology of $\ol{\caD}_{t_{-}}\cup\spec(\bsa)$, hence closed in $\C$. Similarly $D_{j}$'s are closed in $\C$, while they might not be open in $\ol{\caD}_{t_{+}}\cup\spec(\bsa)$ when $m=\infty$. To sum up, each of $\{C_{i}\}$ and $\{D_{j}\}$ is a collection of disjoint, connected, compact subsets of $\C$. 
		
		Then, since $\ol{\caD}_{t_{-}}\subset\ol{\caD}_{t_{+}}$, for each $i\in\{1,\cdots,k\}$ there exists $j_{i}\in\{1,\cdots,m\}$ such that
		\beq
		C_{i}\subset D_{j_{i}}, \qquad C_{i}\cap D_{j}=\emptyset,\,\, \forall j\neq j_{i}.
		\eeq
		Without loss of generality we assume $\{j_{1},\cdots,j_{k}\}=\{1,\cdots,k'\}$ with $k'\leq k$. Notice for each $j>k'$ that the function $f_{\bsa}$ is real analytic in a neighborhood of $D_{j}$, and that $f_{\bsa}(z)\geq 1/t_{+}$ for all $z\in D_{j}$, due to
		\beq
		D_{j}\subset \ol{\caD}_{t_{+}}\setminus\bigcup_{i=1}^{k}C_{i}\subset \ol{\caD}_{t_{+}}\cap(\C\setminus\spec(\bsa)).
		\eeq
		
		In what follows, we deduce that $f_{\bsa}$ has a local maximum in $\C\setminus\spec(\bsa)$, that is, there exists a point $z_{0}\in\C\setminus\spec(\bsa)$ and $r_{0}>0$ so that $f_{\bsa}(z_{0})\geq f_{\bsa}(z)$ for all $\absv{z-z_{0}}<r_{0}$. This would lead to a contradiction since $f_{\bsa}$ is a non-constant subharmonic function in the open set $\C\setminus\spec(\bsa)$, due to $\Delta f_{\bsa}(z)>0$ from \eqref{eq:subhar}.
		
		To this end, we divide the proof into two cases; firstly, we assume that $f_{\bsa}$ is identically $1/t_{+}$ on $\bigcup_{j>k'}D_{j}$. In this case, we take $z_{0}$ to be any point in $\bigcup_{j>k'}D_{j}$ and small enough $r_{0}$ so that the disk $D(z_{0},r_{0})$ lies in $\C\setminus\spec(\bsa)$ while not intersecting with $\bigcup_{j\leq k'}D_{j}$. Then it immediately follows that
		\beq
		D(z_{0},r_{0})\cap \ol{\caD}_{t_{+}}\subset \bigcup_{j>k'}D_{j},
		\eeq
		which in turn implies $f_{\bsa}(w)\leq t_{+}^{-1}=f_{\bsa}(z_{0})$ for any $w\in D(z_{0},r_{0})$.
		
		Secondly, we consider the complementary case when there exists $j_{0}>k'$ and $z\in D_{j_{0}}$ with $f_{\bsa}(z)>t_{+}^{-1}$. In this case we take $z_{0}$ to be the point in $D_{j_{0}}$ with $f_{\bsa}(z_{0})=\max_{z\in D_{j_{0}}}f_{\bsa}(z)$. Then, by the continuity of $f_{\bsa}$, we may take a small enough $r_{0}>0$ so that
		\beq
		D(z_{0},r_{0})\subset \caD_{t_{+}}\cap(\C\setminus\spec(\bsa)),
		\eeq
		which implies that $f_{\bsa}$ attains a local maximum at $z_{0}$ in $D(z_{0},r_{0})$. This completes the proof of Theorem~\ref{thm:conn}.
	\end{proof}
	
\subsection*{Acknowledgment}
		We thank Ping Zhong for pointing out missing references and providing helpful comments. We also thank the anonymous referee for many valuable comments and proposals to streamline the presentation. This work was partially supported by ERC Advanced Grant "RMTBeyond" No. 101020331.


	
	
	
	
	
	
	
	

\begin{thebibliography}{99}
		\providecommand{\url}[1]{\texttt{#1}}
		\providecommand{\urlprefix}{URL }
		\providecommand{\eprint}[2][]{\url{#2}}
		
		\bibitem{Alt-Kruger2024Brown}
		J.~Alt and T.~Krüger.
		\newblock {B}rown measures of deformed $l^\infty$-valued circular elements.
		\newblock {\em preprint arXiv:2409.15405}, 2024.
		
		\bibitem{Alt-Erdos-Kruger2020}
		J.~Alt, L.~Erd\H{o}s, and T.~Kr\"{u}ger, The {D}yson equation with linear
		self-energy: spectral bands, edges and cusps. \emph{Doc. Math.} \textbf{25}
		(2020), 1421--1539 \MR{4164728}
		
		\bibitem{Alt-Erdos-Kruger2018}
		J.~Alt, L.~Erd{\H{o}}s, and T.~Kr\"{u}ger, Local inhomogeneous circular law.
		\emph{Ann. Appl. Probab.} \textbf{28} (2018), no.~1, 148--203 \MR{3770875}
		
		\bibitem{Alt-Erdos-Kruger2021}
		J.~Alt, L.~Erd{\H{o}}s, and T.~Kr\"{u}ger, Spectral radius of random matrices
		with independent entries. \emph{Probab. Math. Phys.} \textbf{2} (2021),
		no.~2, 221--280
		
		\bibitem{Alt-Kruger2021}
		J.~Alt and T.~Kr\"{u}ger, Inhomogeneous circular law for correlated matrices.
		\emph{J. Funct. Anal.} \textbf{281} (2021), no.~7, 109120 \MR{4271784}
		
		\bibitem{Bao-Erdos-Schnelli2020JAM}
		Z.~Bao, L.~Erd{\H{o}}s, and K.~Schnelli, On the support of the free additive
		convolution. \emph{J. Anal. Math.} \textbf{142} (2020), no.~1, 323--348
		\MR{4205272}
		
		\bibitem{Belinschi-Yin-Zhong2022}
		S.~Belinschi, Z.~Yin, and P.~Zhong, The {B}rown measure of a sum of two free
		random variables, one of which is triangular elliptic. \emph{Adv. Math.}
		\textbf{441} (2024), Paper No. 109562 \MR{4710866}
		
		\bibitem{Belinschi2006}
		S.~T. Belinschi, A note on regularity for free convolutions. \emph{Ann. Inst.
			Henri Poincar\'{e} Probab. Stat.} \textbf{42} (2006), no.~5, 635--648
		\MR{2259979}
		
		\bibitem{Belinschi2008}
		S.~T. Belinschi, The {L}ebesgue decomposition of the free additive convolution
		of two probability distributions. \emph{Probab. Theory Related Fields}
		\textbf{142} (2008), no. 1-2, 125--150 \MR{2413268}
		
		\bibitem{Belinschi2014}
		S.~T. Belinschi, {$L^\infty$}-boundedness of density for free additive
		convolutions. \emph{Rev. Roumaine Math. Pures Appl.} \textbf{59} (2014),
		no.~2, 173--184 \MR{3299499}
		
		\bibitem{Belinschi-Bercovici2007}
		S.~T. Belinschi and H.~Bercovici, A new approach to subordination results in
		free probability. \emph{J. Anal. Math.} \textbf{101} (2007), 357--365
		\MR{2346550}
		
		\bibitem{Belinschi-Mai-Speicher2017}
		S.~T. Belinschi, T.~Mai, and R.~Speicher, Analytic subordination theory of
		operator-valued free additive convolution and the solution of a general
		random matrix problem. \emph{J. Reine Angew. Math.} \textbf{732} (2017),
		21--53 \MR{3717087}
		
		\bibitem{Belinschi-Popa-Vinnikov2012}
		S.~T. Belinschi, M.~Popa, and V.~Vinnikov, Infinite divisibility and a
		non-commutative {B}oolean-to-free {B}ercovici-{P}ata bijection. \emph{J.
			Funct. Anal.} \textbf{262} (2012), no.~1, 94--123 \MR{2852257}
		
		\bibitem{Belinschi-Sniday-Speicher2018}
		S.~T. Belinschi, P.~\'{S}niady, and R.~Speicher, Eigenvalues of non-{H}ermitian
		random matrices and {B}rown measure of non-normal operators: {H}ermitian
		reduction and linearization method. \emph{Linear Algebra Appl.} \textbf{537}
		(2018), 48--83 \MR{3716236}
		
		\bibitem{Bercovici-Wang-Zhong2023}
		H.~Bercovici, J.-C. Wang, and P.~Zhong, Superconvergence and regularity of
		densities in free probability. \emph{Trans. Amer. Math. Soc.} \textbf{376}
		(2023), no.~7, 4901--4956 \MR{4608435}
		
		\bibitem{Bercovici-Zhong2022}
		H.~Bercovici and P.~Zhong, The {B}rown measure of a sum of two free random
		variables, one of which is {R}-diagonal. 2022,
		arXiv{2209.12379}  
		
		\bibitem{Biane1997}
		P.~Biane, On the free convolution with a semi-circular distribution.
		\emph{Indiana Univ. Math. J.} \textbf{46} (1997), no.~3, 705--718
		\MR{1488333}
		
		\bibitem{Biane1998}
		P.~Biane, Processes with free increments. \emph{Math. Z.} \textbf{227} (1998),
		no.~1, 143--174 \MR{1605393}
		
		\bibitem{Biane-Lehner2001}
		P.~Biane and F.~Lehner, Computation of some examples of {B}rown's spectral
		measure in free probability. \emph{Colloq. Math.} \textbf{90} (2001), no.~2,
		181--211 \MR{1876844}
		
		\bibitem{Bordenave-Capitaine2016}
		C.~Bordenave and M.~Capitaine, Outlier eigenvalues for deformed i.i.d. random
		matrices. \emph{Comm. Pure Appl. Math.} \textbf{69} (2016), no.~11,
		2131--2194 \MR{3552011}
		
		\bibitem{Bordenave-Caputo-Chafai2014}
		C.~Bordenave, P.~Caputo, and D.~Chafa\"{\i}, Spectrum of {M}arkov generators on
		sparse random graphs. \emph{Comm. Pure Appl. Math.} \textbf{67} (2014),
		no.~4, 621--669 \MR{3168123}
		
		\bibitem{Bourgade-Yau-Yin2014Edge}
		P.~Bourgade, H.-T. Yau, and J.~Yin, The local circular law {II}: the edge case.
		\emph{Probab. Theory Related Fields} \textbf{159} (2014), no. 3-4, 619--660
		\MR{3230004}
		
		\bibitem{Brown1986}
		L.~G. Brown, Lidski\u{\i}'s theorem in the type {${\rm II}$} case. In
		\emph{Geometric methods in operator algebras ({K}yoto, 1983)}, pp. 1--35,
		Pitman Res. Notes Math. Ser. 123, Longman Sci. Tech., Harlow, 1986
		\MR{866489}
		
		\bibitem{Campbell-Cipolloni-Erdos-Ji2024}
		A.~Campbell, G.~Cipolloni, L.~Erd\H{o}s, and H.~C. Ji, On the spectral edge of
		non-{H}ermitian random matrices  (in preparation)
		
		\bibitem{Dozier-Silverstein2007_1}
		R.~B. Dozier and J.~W. Silverstein, Analysis of the limiting spectral
		distribution of large dimensional information-plus-noise type matrices.
		\emph{J. Multivariate Anal.} \textbf{98} (2007), no.~6, 1099--1122
		\MR{2326242}
		
		\bibitem{Dozier-Silverstein_2}
		R.~B. Dozier and J.~W. Silverstein, On the empirical distribution of
		eigenvalues of large dimensional information-plus-noise-type matrices.
		\emph{J. Multivariate Anal.} \textbf{98} (2007), no.~4, 678--694 \MR{2322123}
		
		\bibitem{Driver-Hall-Kemp2022}
		B.~K. Driver, B.~Hall, and T.~Kemp, The {B}rown measure of the free
		multiplicative {B}rownian motion. \emph{Probab. Theory Related Fields}
		\textbf{184} (2022), no. 1-2, 209--273 \MR{4498510}
		
		\bibitem{Dykema-Haagerup2004}
		K.~Dykema and U.~Haagerup, D{T}-operators and decomposability of {V}oiculescu's
		circular operator. \emph{Amer. J. Math.} \textbf{126} (2004), no.~1, 121--189
		\MR{2033566}
		
		\bibitem{Dykema-Noles-Zanin2018}
		K.~Dykema, J.~Noles, and D.~Zanin, Decomposability and norm convergence
		properties in finite von {N}eumann algebras. \emph{Integral Equations
			Operator Theory} \textbf{90} (2018), no.~5, Paper No. 54, 32 \MR{3829542}
		
		\bibitem{Girko1984}
		V.~L. Girko, The circular law. \emph{Teor. Veroyatnost. i Primenen.}
		\textbf{29} (1984), no.~4, 669--679 \MR{773436}
		
		\bibitem{Guionnet-Manjunath-Zeitouni2011}
		A.~Guionnet, M.~Krishnapur, and O.~Zeitouni, The single ring theorem.
		\emph{Ann. of Math. (2)} \textbf{174} (2011), no.~2, 1189--1217 \MR{2831116}
		
		\bibitem{Haagerup-Kemp-Speicher2010}
		U.~Haagerup, T.~Kemp, and R.~Speicher, Resolvents of {$\mathscr{R}$}-diagonal
		operators. \emph{Trans. Amer. Math. Soc.} \textbf{362} (2010), no.~11,
		6029--6064 \MR{2661507}
		
		\bibitem{Haagerup-Larsen2000}
		U.~Haagerup and F.~Larsen, Brown's spectral distribution measure for
		{$R$}-diagonal elements in finite von {N}eumann algebras. \emph{J. Funct.
			Anal.} \textbf{176} (2000), no.~2, 331--367 \MR{1784419}
		
		\bibitem{Haagerup-Schultz2007}
		U.~Haagerup and H.~Schultz, Brown measures of unbounded operators affiliated
		with a finite von {N}eumann algebra. \emph{Math. Scand.} \textbf{100} (2007),
		no.~2, 209--263 \MR{2339369}
		
		\bibitem{Hall-Ho2023}
		B.~C. Hall and C.-W. Ho, The {B}rown measure of a family of free multiplicative
		{B}rownian motions. \emph{Probab. Theory Related Fields} \textbf{186} (2023),
		no. 3-4, 1081--1166 \MR{4603401}
		
		\bibitem{Hardt1975}
		R.~M. Hardt, Stratification of real analytic mappings and images. \emph{Invent.
			Math.} \textbf{28} (1975), 193--208 \MR{372237}
		
		\bibitem{Ho2022}
		C.-W. Ho, The {B}rown measure of the sum of a self-adjoint element and an
		elliptic element. \emph{Electron. J. Probab.} \textbf{27} (2022), Paper No.
		123, 32 \MR{4489192}
		
		\bibitem{Ho-Zhong2023}
		C.-W. Ho and P.~Zhong, {B}rown measures of free circular and multiplicative
		{B}rownian motions with self-adjoint and unitary initial conditions. \emph{J.
			Eur. Math. Soc. (JEMS)} \textbf{25} (2023), no.~6, 2163--2227 \MR{4592867}
		
		\bibitem{Kolupaiev2021}
		O.~Kolupaiev, Anomalous singularity of the solution of the vector {D}yson
		equation in the critical case. \emph{J. Math. Phys.} \textbf{62} (2021),
		no.~12, Paper No. 123503, 22 \MR{4348650}
		
		\bibitem{Kruger-Renfrew2021}
		T.~Kr{\"u}ger and D.~Renfrew, Singularity degree of structured random matrices. 2021,
		\textit{preprint arXiv:2108.08811} 
		
		\bibitem{Lee-Schnelli2013}
		J.~O. Lee and K.~Schnelli, Local deformed semicircle law and complete
		delocalization for {W}igner matrices with random potential. \emph{J. Math.
			Phys.} \textbf{54} (2013), no.~10, 103504, 62 \MR{3134604}
		
		\bibitem{Lee-Schnelli-Stetler-Yau2016}
		J.~O. Lee, K.~Schnelli, B.~Stetler, and H.-T. Yau, Bulk universality for
		deformed {W}igner matrices. \emph{Ann. Probab.} \textbf{44} (2016), no.~3,
		2349--2425 \MR{3502606}
		
		\bibitem{Liu-Zhang2023arXiv}
		D.-Z. Liu and L.~Zhang, Critical edge statistics for deformed {G}in{UE}s. 
		2023, \textit{preprint arXiv:2311.13227} 
		
		\bibitem{Liu-Zhang2024arXiv}
		D.-Z. Liu and L.~Zhang, Repeated erfc statistics for deformed {G}in{UE}s. 
		2024, \textit{preprint arXiv:2402.14362} 
		
		\bibitem{Mingo-Speicher2017}
		J.~A. Mingo and R.~Speicher, \emph{Free probability and random matrices}.
		Fields Institute Monographs 35, Springer, New York; Fields Institute for
		Research in Mathematical Sciences, Toronto, ON, 2017 \MR{3585560}
		
		\bibitem{Moreillon2022}
		P.~Moreillon, Density of the free additive convolution of multi-cut measures.
		2022, \textit{preprint arXiv:2209.15607}
		
		\bibitem{Moreillon-Schnelli2022}
		P.~Moreillon and K.~Schnelli, The support of the free additive convolution of multi-cut measures. 
		2022, \textit{preprint arXiv:2201.05582}  
		
		\bibitem{Pastur1972}
		L.~A. Pastur, The spectrum of random matrices. \emph{Teoret. Mat. Fiz.}
		\textbf{10} (1972), no.~1, 102--112 \MR{475502}
		
		\bibitem{Shcherbina2011a}
		M.~Shcherbina, Central limit theorem for linear eigenvalue statistics of the
		{W}igner and sample covariance random matrices. \emph{Zh. Mat. Fiz. Anal.
			Geom.} \textbf{7} (2011), no.~2, 176--192, 197, 199 \MR{2829615}
		
		\bibitem{Shcherbina2011b}
		T.~Shcherbina, On universality of local edge regime for the deformed {G}aussian
		unitary ensemble. \emph{J. Stat. Phys.} \textbf{143} (2011), no.~3, 455--481
		\MR{2799948}
		
		\bibitem{Sniady2002}
		P.~\'{S}niady, Random regularization of {B}rown spectral measure. \emph{J.
			Funct. Anal.} \textbf{193} (2002), no.~2, 291--313 \MR{1929504}
		
		\bibitem{Zhong2021}
		P.~Zhong, {B}rown measure of the sum of an elliptic operator and a free random
		variable in a finite von {N}eumann algebra. 
		2021, \textit{preprint arXiv:2108.09844}  
		
	\end{thebibliography}
\end{document}